\begin{document}

\title*{From hyperbolic Dehn filling to surgeries in representation varieties}
\titlerunning{Surgeries in representation varieties} 
\author{Georgios Kydonakis}
\institute{Georgios Kydonakis \at Max-Planck-Institut f\"{u}r Mathematik, Bonn, Germany\\
\email{kydonakis@mpim-bonn.mpg.de}}
%
%
\maketitle

\abstract*{Hyperbolic Dehn surgery and the bending procedure provide two ways which can be used to describe hyperbolic deformations of a complete hyperbolic structure on a 3-manifold. Moreover, one can obtain examples of non-Haken manifolds without the use of Thurston's Uniformization Theorem. We review these gluing techniques and present a logical continuity between these ideas and gluing methods for Higgs bundles. We demonstrate how one can construct certain model objects in representation varieties $\text{Hom} \left( \pi_{1} \left( \Sigma \right), G \right) $ for a topological surface $\Sigma$ and a semisimple Lie group $G$. Explicit examples are produced in the case of $\Theta$-positive representations lying in the smooth connected components of the $\text{SO} \left(p,p+1 \right)$-representation variety.}

\abstract{Hyperbolic Dehn surgery and the bending procedure provide two ways which can be used to describe hyperbolic deformations of a complete hyperbolic structure on a 3-manifold. Moreover, one can obtain examples of non-Haken manifolds without the use of Thurston's Uniformization Theorem. We review these gluing techniques and present a logical continuity between these ideas and gluing methods for Higgs bundles. We demonstrate how one can construct certain model objects in representation varieties $\text{Hom} \left( \pi_{1} \left( \Sigma \right), G \right) $ for a topological surface $\Sigma$ and a semisimple Lie group $G$. Explicit examples are produced in the case of $\Theta$-positive representations lying in the smooth connected components of the $\text{SO} \left(p,p+1 \right)$-representation variety.}

\section{Introduction}
A Dehn surgery on a 3-manifold $M$ containing a link $L \subset S^{3}$ is a 2-step process involving the removal of an open tubular neighborhood of the link (drilling) and then gluing back a solid torus using a homeomorphism from the boundary of the solid torus to each of the torus boundary components of $M$ (filling). Of particular interest are the many inequivalent ways one can perform the filling step of the operation, thus providing a way to represent certain examples of 3-dimensional manifolds. In fact, the so-called \textit{fundamental theorem of surgery theory} by Lickorish and Wallace implies that every closed orientable and connected 3-manifold can be obtained by performing a Dehn surgery on a link in a 3-sphere. 

William Thurston introduced hyperbolic geometry into this operation, thus opening the way to certain breakthroughs in 3-manifold theory. His \textit{hyperbolic Dehn filling theorem} implies that the complete hyperbolic structure on the interior of a compact 3-manifold with boundary has a space of hyperbolic deformations parameterized by the generalized Dehn filling coefficients describing the metric completion of the ends of the interior. Among the various and deep advances marked by this result, we highlight here the fact that using hyperbolic Dehn surgery theory one can also obtain examples of non-Haken manifolds, whose hyperbolicity cannot be shown by Thurston's Uniformization Theorem for Haken manifolds. In general, such examples of non-Haken manifolds are not easy to construct otherwise. Deformations of hyperbolic cone structures can, moreover, be better understood when viewed through this prism. In the course of proving Thurston's theorem, one shows not only the existence of a 1-parameter family of cone 3-manifold structures, but can also obtain a path of corresponding holonomies in the representation variety $\text{Hom}( {{\pi }_{1}}( M ), \text{SL}( 2,\mathbb{C} ) )$.  

Deformations of hyperbolic structures on $n$-manifolds can be also described by the \textit{bending} procedure. This involves the construction of a family of quasiconformal homeomorphisms of the hyperbolic $(n+1)$-space, which is required to converge under some compatibility conditions. In the case of a surface, the embedded totally geodesic hypersurfaces are simple closed curves along which bending is possible.

Hyperbolic Dehn surgery was originally developed in dimension 3. In this chapter we describe a set of similar ideas of surgery techniques in representation varieties $\text{Hom}( {{\pi }_{1}}( M ), G)$, where $M$ this time is a closed connected and oriented topological surface of genus $g \ge 2$ and $G$ is a semisimple Lie group. The Teichm\"{u}ller space, viewed as the moduli space of marked hyperbolic structures on $\Sigma$, can be realized as a connected component of the representation variety $\text{Hom}( {{\pi }_{1}}( M ), \text{PSL}( 2,\mathbb{R} ) )$. The recently-emerged field of \textit{higher Teichm\"{u}ller theory} involves the study of certain connected components of the representation varieties $\text{Hom}( {{\pi }_{1}}( M ), G)$, which share essential geometric, topological and dynamical properties with the classical Teichm\"{u}ller space. 

We describe here a gluing construction in $\text{Hom}( {{\pi }_{1}}( M ), G)$ ``in the tradition'' of Thurston's hyperbolic Dehn filling procedure. The parameters involved in this construction are the genus of the surface $\Sigma$ and the holonomy of a surface group representation along the boundary of $\Sigma$. 

The non-abelian Hodge correspondence referring to a homeomorphism between representation varieties and moduli spaces of Higgs bundles over a Riemann surface (with underlying topological surface $\Sigma$ as above) allows us to develop a gluing procedure for the corresponding holomorphic objects, and this makes it easier to determine the connected component where these newly constructed model objects lie, due to an explicit computation of appropriate topological invariants that emerge for their holomorphic counterparts. The deformations involved in the construction are rather expressed in terms of appropriate complex gauge transformations on these holomorphic objects.

In this way, one can construct specific models in certain subsets of  representation varieties $\text{Hom}( {{\pi }_{1}}( M ), G)$, that are hard to be obtained otherwise; in particular, model representations that do not factor as $\rho: \pi_{1} \left( \Sigma \right) \to \text{SL}\left( 2,\mathbb{R} \right) \to G$. These models can be used in turn to describe their deformations in the representation variety and use them as a means to study open subsets (or connected components) of objects with certain geometric properties. As an example, we study here model $\Theta$-positive representations that exhaust the smooth $p\cdot \left( 2g-2 \right)-1$ exceptional components of the $\text{SO} \left(p,p+1 \right)$-character variety for $p>2$; similar models have been also constructed for the $2g-3$ exceptional components of the $\text{Sp}(4, \mathbb{R})$-character variety.

This comparison of ideas points towards further ways in developing tools to study certain subsets of representation varieties, quantitative aspects of this holomorphic gluing strategy or universal bounds for the rational parameters involved.

\section{Hyperbolic Dehn Surgery}
\label{sec:2}
In this section we review the basic concepts involved in the hyperbolic Dehn surgery operation. Even though the results of the technique summarized here do not directly apply for the case of character varieties that we study next, these provide a motivation and an interesting counterpoint to the fundamental ideas behind these surgery methods.  
  
\subsection{Dehn surgery}
\label{sec:2.1}
Dehn surgery\index{Dehn surgery} is a method that has found profound relevance in 3-manifold topology and knot theory. It provides a way to represent 3-dimensional manifolds using a ``drilling and filling'' process. First, a solid torus is removed from a 3-manifold (drilling) and then it is re-attached in many inequivalent ways (filling). This two-stage operation was introduced by Max Dehn in Kapitel II of his 1910 article \textit{\"{U}ber die Topologie des dreidimensionales Raumes} \cite{Dehn} as a method for constructing \textit{Poincar\'{e} spaces}, that is, non-simply connected 3-manifolds with the same topology as the 3-sphere. The texts of Boyer \cite{Boyer}, Gordon \cite{Gordon1}, \cite{Gordon2}, Luecke \cite{Luecke} offer a broad survey on this construction with numerous references for further study.  

The basic parameter of the Dehn surgery operation, in particular referring to the filling stage of the operation, is that of a \textit{slope} on a torus; we briefly introduce this next. Let $M$ be an orientable 3-manifold and $T\subset \partial M$, a toral boundary component of $M$. Denote by $K$ a knot lying in the interior of $M$ and let $N(K) \subset \text{int}(M)$ be a closed tubular neighborhood of $K$. For a homeomorphism $f:\partial \left( {{S}^{1}}\times {{D}^{2}} \right)\to T$, consider the identification space $M\left( T;f \right) :=\left( {{S}^{1}}\times {{D}^{2}} \right){{\cup }_{f}}M$ obtained by identifying the points of $\partial \left( {{S}^{1}}\times {{D}^{2}} \right)$ with their images by $f$. We shall call $M\left( T;f \right)$, a \textit{Dehn filling} of $M$ along $T$. A \textit{Dehn surgery} on a knot $K$ is then a filling of the exterior of the knot $K$, ${{M}_{K}}:= M\backslash \text{int}\left( N\left( K \right) \right)$, along $\partial N\left( K \right)$. 

Note that a filling  $M\left( T;f \right)$ depends only on the isotopy class of the attaching homeomorphism $f:\partial \left( {{S}^{1}}\times {{D}^{2}} \right)\to T$. In fact, the dependence of $f$ is much weaker, for, if ${{C}_{0}}=\left\{ pt \right\}\times \partial {{D}^{2}}\subset \partial \left( {{S}^{1}}\times {{D}^{2}} \right)$, then $M\left( T;f \right)$ depends only on the isotopy class of the curve $f\left(C_{0}\right)$ in $T$. 

\begin{definition}
A \textit{slope}\index{slope} on a torus $T$ is defined as the isotopy class of an essential unoriented simple closed curve on $T$. If $K$ is a knot in a 3-manifold $M$, then a slope of $K$ is any slope on $\partial N \left(K \right)$. 
\end{definition}
One has the following proposition:
\begin{proposition}
A Dehn filling of $M$ along a torus $T\subset \partial M$ is determined up to orientation preserving homeomorphism, by a slope on $T$. Furthermore, any slope on $T$ arises as the slope of a Dehn filling of $M$. 
\end{proposition}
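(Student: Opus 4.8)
The plan is to treat the two assertions separately. For the first, I would reduce the homeomorphism classification of the filling to an extension problem across the solid torus $V := S^1\times D^2$, whose meridian is $C_0 = \{\mathrm{pt}\}\times\partial D^2$. Given two attaching homeomorphisms $f,g\colon\partial V\to T$ whose meridian images $f(C_0)$ and $g(C_0)$ are isotopic as unoriented curves in $T$, I would first invoke the fact that isotopic simple closed curves on a surface are ambient isotopic to replace $g$ by an isotopic map (which gives the same filling, by the remark preceding the statement), so that $g(C_0)=f(C_0)$ as subsets of $T$. Then $\phi := g^{-1}\circ f$ is a self-homeomorphism of $\partial V$ carrying $C_0$ to itself, so in particular $\phi(C_0)$ is isotopic to $\pm C_0$. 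Assuming $\phi$ extends to a homeomorphism $\Phi\colon V\to V$, I would set $F\colon M(T;f)\to M(T;g)$ equal to $\Phi$ on the solid-torus summand and to the identity on $M$; a point $x\in\partial V$ glued to $f(x)$ in $M(T;f)$ is then sent to $\Phi(x)=g^{-1}(f(x))$, which in $M(T;g)$ is glued to $g(g^{-1}(f(x)))=f(x)$, so $F$ is a well-defined homeomorphism. Choosing the orientation conventions so that the attaching maps reverse boundary orientations forces $\phi$ to preserve orientation, and one then checks that $\Phi$, hence $F$, may be taken orientation-preserving.

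The one substantive step — and the part where I expect the real care to be needed — is the extension lemma: a homeomorphism $\phi$ of $\partial V\cong T^2$ with $\phi(C_0)$ isotopic to $\pm C_0$ extends over $V$. I would prove this by the classical cut-and-paste argument: after an isotopy assume $\phi$ maps $C_0$ to itself and is standard in a collar of it; cut $V$ open along the meridian disk $D=\{\mathrm{pt}\}\times D^2$ to obtain a $3$-ball $B$ whose boundary $2$-sphere is the union of the complementary annulus in $\partial V$ with two copies $D_0,D_1$ of $D$; extend $\phi$ over $D_0$ and $D_1$ by the \emph{same} radial cone to obtain a homeomorphism of $\partial B\cong S^2$; apply the Alexander trick that every homeomorphism of $S^2$ extends over the $3$-ball; and observe that, because the two cones were chosen identically, the resulting homeomorphism of $B$ descends to the desired $\Phi\colon V\to V$. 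The delicate point is precisely this matching of the map on the two copies of $D$ together with the orientation bookkeeping; the rest is formal. (The converse is also true, since the meridian is the unique isotopy class of essential simple closed curve on $\partial V$ bounding a disk in $V$, but it is not needed here.)

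For the second assertion I would argue by direct construction. Given a slope $s$ on $T$ represented by an essential simple closed curve $\gamma$, I orient $\gamma$; then $[\gamma]$ is a primitive class in $H_1(T;\mathbb{Z})\cong\mathbb{Z}^2$, hence extends to a basis, and the corresponding element of $GL(2,\mathbb{Z})$ is realized by a homeomorphism $T\to\partial V$ carrying $\gamma$ to the meridian $C_0$ — equivalently, any two essential simple closed curves on the torus are interchanged by a self-homeomorphism. Its inverse is an attaching homeomorphism $f$ with $f(C_0)$ isotopic to $\gamma$, so the Dehn filling $M(T;f)$ has slope $s$. Combining the two parts, the filling is determined up to orientation-preserving homeomorphism by the slope on $T$, and every slope is realized; this also refines the observation made just before the statement.
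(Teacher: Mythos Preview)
The paper does not actually prove this proposition: it is stated as a well-known fact in a survey section, with the reader directed to the references of Boyer, Gordon, and Luecke for background. The only hint toward a proof is the sentence immediately preceding the statement, observing that $M(T;f)$ depends only on the isotopy class of $f(C_0)$ in $T$.

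Your argument is correct and is the standard one found in those references. The reduction to an extension problem over the solid torus, the cut-along-a-meridian-disk plus Alexander-trick proof of the extension lemma, and the realization of an arbitrary slope via the $GL(2,\mathbb{Z})$-action on $H_1(T)$ are exactly how this is usually done. Two small remarks: once you have arranged $g(C_0)=f(C_0)$ as subsets, you in fact have $\phi(C_0)=C_0$ setwise (not merely up to isotopy), which slightly streamlines the next step; and for the orientation bookkeeping it may help to note explicitly that the orientation-preserving mapping classes of $\partial V$ fixing the meridian are exactly the powers of the longitudinal Dehn twist together with the involution $(z,w)\mapsto(\bar z,\bar w)$, all of which visibly extend over $V$ preserving orientation. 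Since the paper offers no competing argument, there is nothing further to compare.
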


The set of slopes on a torus $T$ is parameterized by the set of $\pm $-pairs of primitive homology classes in ${{H}_{1}}\left( T \right)$. In particular, for the 3-sphere ${{S}^{3}}$ with its usual orientation based on the right-hand rule, the set of slopes of knots in ${{S}^{3}}$ is canonically identified with $\mathbb{Q}\cup \left\{ \frac{1}{0} \right\}$; we may thus realize a slope $r$ of a knot $K$ by a fraction $\frac{p}{q}\in \mathbb{Q}\cup \left\{ \frac{1}{0} \right\}$.

\begin{definition}
Let $K$ be a knot in ${{S}^{3}}$. An \textit{integral slope}\index{integral slope}\index{slope!integral} of $K$ is a slope corresponding to an integer. We will call \textit{integral surgery}\index{integral surgery}\index{surgery!integral} a surgery on $K$ whose slope is integral.  
\end{definition}

One may now consider the problem of existence and uniqueness of a surgery presentation of a given closed connected orientable 3-manifold by surgery on a finite number of knots in $S^{3}$. By a set of \textit{surgery data} $\left( L;{{r}_{1}},\ldots ,{{r}_{n}} \right)$ we shall mean a link $L={{K}_{1}}\cup \cdots \cup {{K}_{n}}$ lying in the interior of a 3-manifold $M$, together with a slope ${{r}_{i}}$ for each knot ${{K}_{i}}$. Let $L\left( {{r}_{1}},\ldots ,{{r}_{n}} \right)$ denote the manifold obtained by performing the Dehn surgeries prescribed by the surgery data. In the special case when $M={{S}^{3}}$  and each ${{r}_{i}}$ is an integral slope, the surgery data $\left( L;{{r}_{1}},\ldots ,{{r}_{n}} \right)$ is often called a \textit{framed link}\index{framed link}.  

The following result is known as the \textit{fundamental theorem of surgery theory}\index{fundamental theorem of surgery theory}; it was proved using different and independent approaches by Lickorish and Wallace:

\begin{theorem}[Lickorish \cite{Lickorish}, Wallace \cite{Wallace}]. Let $M$ be a closed connected orientable 3-manifold. There exists a framed link  $\left( L;{{r}_{1}},\ldots ,{{r}_{n}} \right)$ in $S^{3}$ such that $M$ is homeomorphic to $L\left( {{r}_{1}},\ldots ,{{r}_{n}} \right)$.
\end{theorem}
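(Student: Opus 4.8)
The plan is to follow Lickorish's route through Heegaard splittings, which keeps the surgery slopes visible throughout; Wallace's cobordism argument is indicated at the end as an alternative. Since $M$ is triangulable, a regular neighborhood of the $1$-skeleton of a triangulation is a handlebody $H$ whose complement $H'$ is again a handlebody, so $M = H \cup_{\phi} H'$ for an orientation-reversing homeomorphism $\phi \colon \partial H' \to \partial H$; this is a Heegaard splitting of some genus $g$. The genus-$g$ \emph{standard} Heegaard splitting presents the $3$-sphere as $S^{3} = H \cup_{\iota} H'$ for a fixed gluing $\iota$. Setting $\psi := \iota^{-1} \circ \phi$, an orientation-preserving self-homeomorphism of the Heegaard surface $\partial H'$, one sees that $M$ is obtained from $S^{3}$ by cutting along the embedded surface $\Sigma := \partial H \subset S^{3}$ and regluing the two sides by (a conjugate of) $\psi$. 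Everything therefore reduces to understanding the effect of regluing $S^{3}$ across $\Sigma$ by an arbitrary element of the mapping class group of $\Sigma$.

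Here I would invoke the theorem --- also due to Lickorish, building on Dehn --- that the mapping class group of a closed orientable surface of genus $g$ is generated by Dehn twists: $\psi$ is isotopic to a finite product $\tau_{\gamma_{1}}^{\varepsilon_{1}} \circ \cdots \circ \tau_{\gamma_{k}}^{\varepsilon_{k}}$ of twists along essential simple closed curves $\gamma_{i} \subset \Sigma$, with $\varepsilon_{i} \in \{ +1, -1 \}$. Since regluing by a composition is the composition of the regluings, it is enough to treat a single twist. The key local lemma is: regluing $S^{3}$ across $\Sigma$ by $\tau_{\gamma}^{\pm 1}$ produces the same manifold as performing a $(\mp 1)$-Dehn surgery on a parallel push-off $\gamma' \subset S^{3}$ of $\gamma$, the slope being measured with respect to the framing of $\gamma'$ induced by $\Sigma$. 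One proves this by modeling a neighborhood of $\gamma$ in $S^{3}$ as $\gamma \times [-1,1] \times [-1,1]$ with $\Sigma$ corresponding to $\gamma \times [-1,1] \times \{ 0 \}$, taking $\gamma' = \gamma \times \{ 0 \} \times \{ 1/2 \}$, and checking directly that excising and re-gluing the surgery solid torus around $\gamma'$ realizes precisely the Dehn twist on $\Sigma$ and nothing more.

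To conclude, I would spread the curves $\gamma_{1}, \dots, \gamma_{k}$ onto $k$ nested parallel copies of $\Sigma$ pushed into $H$ at increasing depths, so that their push-offs form an honest link $L = \gamma_{1}' \cup \cdots \cup \gamma_{k}' \subset S^{3}$; applying the local lemma factor by factor gives $M \cong L(r_{1}, \dots, r_{n})$, where $r_{i}$ is the surface framing of $\gamma_{i}'$ shifted by $\mp 1$. It remains only to note that every framing of a simple closed curve in $S^{3}$ is an integer: there is a canonical $0$-framing coming from a Seifert surface, and any other framing, in particular one induced by an embedded surface, differs from it by an integer. Hence each $r_{i} \in \mathbb{Z}$ and $(L; r_{1}, \dots, r_{n})$ is a framed link realizing $M$.

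The step I expect to be the main obstacle is the local lemma of the second paragraph: showing that a Dehn twist along a curve in the Heegaard surface and a $(\mp 1)$-surgery on the parallel push-off give homeomorphic manifolds requires an explicit identification of the two solid tori involved, together with careful bookkeeping of orientation and framing conventions (the sign, and the exact slope relative to the surface framing). Everything else amounts to assembling standard ingredients --- existence of Heegaard splittings, generation of the mapping class group by Dehn twists, integrality of surface framings. For completeness I record Wallace's independent argument: since the oriented cobordism group $\Omega_{3}^{\mathrm{SO}}$ vanishes, $M$ bounds a compact connected oriented $4$-manifold $W$; a handle decomposition of $W$ can be simplified by standard handle moves into one built from $B^{4}$ using only $1$- and $2$-handles, and trading the $1$-handles for $0$-framed $2$-handles presents $W$ as $B^{4}$ with $2$-handles attached along a framed link $L$, whence $\partial W = M$ is $L(r_{1}, \dots, r_{n})$.
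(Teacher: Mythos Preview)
The paper does not actually prove this theorem; it is stated as the ``fundamental theorem of surgery theory'' with citations to Lickorish and Wallace, and the exposition moves on immediately to Kirby's calculus and hyperbolic Dehn surgery. So there is no proof in the paper to compare against.

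Your sketch is a faithful outline of Lickorish's original argument and is correct in its broad strokes. The reduction via a Heegaard splitting to regluing by a mapping class, the Dehn--Lickorish generation by twists, the local lemma identifying a single twist with a $\pm 1$-surgery on a push-off, the trick of spreading the twist curves onto parallel copies of the Heegaard surface, and the observation that surface framings are integral --- these are exactly the ingredients of the classical proof, and you have correctly flagged the local lemma as the one place where real work (and real sign-chasing) is required. The Wallace paragraph at the end is also an accurate summary of the cobordism route, though the handle-trading step (replacing $1$-handles by $0$-framed $2$-handles) deserves a word of justification: it uses that the attaching sphere of a $1$-handle bounds in the boundary after sliding, which is where simple connectivity of $S^{3}$ enters.

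One small point worth tightening: in your first paragraph you write $\psi = \iota^{-1}\circ\phi$ and then say $M$ is obtained from $S^{3}$ by cutting along $\Sigma$ and regluing by ``a conjugate of'' $\psi$. It is cleaner to fix one handlebody $H$ once and for all, identify both $\partial H$ and $\partial H'$ with a model surface, and then compare the two gluing maps directly as elements of the mapping class group of that model; the ``conjugate of'' hedge is then unnecessary.
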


For the problem of uniqueness of a surgery presentation of a given manifold, Kirby \cite{Kirby} introduced two moves on (integrally) framed links which do not alter the presented manifold; he also proved that two framed links represent manifolds which are orientation preserving homeomorphic if and only if they are related by a finite sequence of these moves, nowadays called \textit{Kirby moves}\index{Kirby move}. This problem was completely analyzed by Rolfsen in \cite{Rolfsen}.    

\subsection{Hyperbolic Dehn surgery}
\label{sec:2.1}
A breakthrough in 3-manifold theory as well as in knot theory was signified by the introduction by Thurston of hyperbolic geometry into the Dehn surgery operation. Necessary and sufficient conditions for the complete gluing of a hyperbolic 3-manifold were given by Seifert in \cite{Seifert}. The concept of link of a cusp point of a hyperbolic 3-manifold was introduced by Thurston in his seminal 1979 lecture notes \cite{Thurston}. 

The celebrated \textit{hyperbolic Dehn filling theorem} of Thurston (Theorem 5.9 in \cite{Thurston}) provides a parameterization of a set of hyperbolic deformations of a complete hyperbolic structure on the interior of a compact 3-manifold with boundary; the parameters, called \textit{generalized Dehn filling coefficients}, describe the metric completion of the ends of the interior.

Among the various and deep advances in 3-manifold theory marked by this result, we will highlight here the fact that using hyperbolic Dehn surgery theory one can also obtain examples of non-Haken manifolds, whose hyperbolicity cannot be shown by Thurston's Uniformization Theorem for Haken manifolds; in fact, the proof of Thurston's theorem does not depend on uniformization. Deformations of hyperbolic cone structures can, moreover, be better understood when viewed through this prism. Another important aspect to be stressed next is the role the generalized Dehn filling coefficients play in the perception of the spaces of hyperbolic deformations parameterized by these coefficients. 

The Theorem was first proven in Thurston's notes \cite{Thurston} in the manifold case and has later been extended in the case of orbifolds by Dunbar and Meyerhoff \cite{DuMe}. A detailed review of the proof in both cases can be found in Appendix B of \cite{BoPo} using (in the manifold case) an argument of Zhou \cite{Zhou1}. We will follow next the approach of \cite{BoPo} for our purposes. 

Let $M$ be a compact 3-manifold with boundary $\partial M=T_{1}^{2}\cup \cdots \cup T_{k}^{2}$, a non-empty union of tori, whose interior $\text{int}\left( M \right)$ is complete hyperbolic with finite volume. For each boundary component $T_{j}^{2}$ of $M$, with $j=1,\ldots ,k$, fix two oriented simple closed curves ${{\mu }_{j}}$ and ${{\lambda }_{j}}$ generating the fundamental group ${{\pi }_{1}}\left( T_{j}^{2} \right)$. The holonomy of ${{\mu }_{j}}$ and ${{\lambda }_{j}}$ can be viewed as affine transformations of $\mathbb{C}=\partial {{\mathbb{H}}^{3}}\backslash \left\{ \infty  \right\}$ ($\infty $ being a point fixed by ${{\mu }_{j}}$ and  ${{\lambda }_{j}}$). Then, one can introduce holomorphic parameters ${{u}_{j}}$ and ${{v}_{j}}$ as branches of the logarithm of the linear part of the holonomy around ${{\mu }_{j}}$ and ${{\lambda }_{j}}$ respectively. For $U\subset {{\mathbb{C}}^{k}}$ a neighborhood of the origin, associate to each $u\in U$ a point ${{\rho }_{u}}\in \mathsf{\mathcal{X}}\left( M \right)={\text{Hom}\left( {{\pi }_{1}}\left( M \right),\text{SL}\left( 2,\mathbb{C} \right) \right)}//{\text{SL}\left( 2,\mathbb{C} \right)}\;$ in the $\text{SL}\left( 2,\mathbb{C} \right)$-character variety; this can be done by considering an analytic section 
$$s:V\subset \mathsf{\mathcal{X}}\left( M \right)\to \text{Hom}\left( {{\pi }_{1}}\left( M \right),\text{SL}\left( 2,\mathbb{C} \right) \right),$$
such that $s\left( {{\chi }_{0}} \right)={{\rho }_{0}}$, where ${{\rho }_{0}}$ is a lift of the holonomy representation of $\text{int}\left( M \right)$ and ${{\chi }_{0}}\in \mathsf{\mathcal{X}}\left( M \right)$ its character. Then, one has the following important lemma:

\begin{lemma}[Lemma B.1.6 in \cite{BoPo}]
For $j=1,\ldots ,k$, there is an analytic map ${{A}_{j}}:U\to \text{SL}\left( 2,\mathbb{C} \right)$ such that for every $u\in U$:
	\[{{\rho }_{u}}\left( {{\mu }_{j}} \right)={{\varepsilon }_{j}}{{A}_{j}}\left( u \right)\left( \begin{matrix}
   {{e}^{{{{u}_{j}}}/{2}\;}} & 1  \\
   0 & {{e}^{-{{{u}_{j}}}/{2}\;}}  \\
\end{matrix} \right){{A}_{j}}{{\left( u \right)}^{-1}},\,\,\,\,\,\,\,\,\,\,\,\,\,\,\,\text{with }{{\varepsilon }_{j}}=\pm 1,\]
\end{lemma}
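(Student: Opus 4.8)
The plan is to exploit the fact that $\rho_u(\mu_j)$ and $\rho_u(\lambda_j)$ commute, since $\mu_j$ and $\lambda_j$ generate the abelian group $\pi_1(T_j^2)$. First I would recall that for the complete structure, i.e. at $u=0$, the holonomy $\rho_0$ restricted to $\pi_1(T_j^2)$ is parabolic: after conjugation, $\rho_0(\mu_j)$ and $\rho_0(\lambda_j)$ are upper-triangular with $1$'s on the diagonal, fixing $\infty \in \partial\mathbb{H}^3$. In particular $\rho_0(\mu_j)$ is conjugate to $\varepsilon_j\left(\begin{smallmatrix} 1 & 1 \\ 0 & 1 \end{smallmatrix}\right)$ for a suitable sign $\varepsilon_j$ (one can arrange the off-diagonal entry to be $1$ by a diagonal conjugation, provided it is nonzero, which holds because $\mu_j$ is a nontrivial peripheral element). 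This gives the claimed normal form at $u=0$ with $A_j(0)$ the conjugating matrix.

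**The deformation argument.**

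For $u \neq 0$ near the origin, the key input is that $\rho_u$ is the holonomy of an \emph{incomplete} hyperbolic structure, so $\rho_u|_{\pi_1(T_j^2)}$ is no longer parabolic but becomes loxodromic (or elliptic): the element $\rho_u(\mu_j)$ acquires eigenvalues $e^{\pm u_j/2}$, which is exactly the content of how the holomorphic parameter $u_j$ was defined — as (twice) a branch of the logarithm of the eigenvalue of the linear part of the holonomy around $\mu_j$. Since $\rho_u(\mu_j)$ and $\rho_u(\lambda_j)$ still commute, they are simultaneously conjugate into the standard torus, hence $\rho_u(\mu_j)$ is conjugate to $\varepsilon_j\left(\begin{smallmatrix} e^{u_j/2} & 0 \\ 0 & e^{-u_j/2}\end{smallmatrix}\right)$. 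The matrix appearing in the statement, $\left(\begin{smallmatrix} e^{u_j/2} & 1 \\ 0 & e^{-u_j/2}\end{smallmatrix}\right)$, is conjugate to this diagonal matrix when $u_j \neq 0$ (the off-diagonal $1$ can be absorbed) and degenerates to the parabolic normal form as $u_j \to 0$; choosing this particular representative is precisely what allows a single analytic family $A_j(u)$ to interpolate between the $u_j \neq 0$ case and the parabolic case $u_j = 0$ without the conjugating matrix blowing up.

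**Producing the analytic map $A_j$.**

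To get analyticity of $u \mapsto A_j(u)$, I would argue as follows. Consider the map sending $A \in \mathrm{SL}(2,\mathbb{C})$ (modulo the stabilizer of the standard model matrix) together with $u$ to the conjugate $\varepsilon_j A \left(\begin{smallmatrix} e^{u_j/2} & 1 \\ 0 & e^{-u_j/2}\end{smallmatrix}\right) A^{-1}$; this is an analytic map, and one checks by a direct computation of the differential at $u=0$, $A=A_j(0)$ that it is a submersion onto a transversal slice to the conjugation orbit. The implicit function theorem in the analytic category then produces, on a possibly smaller neighborhood $U$ of the origin, an analytic map $A_j \colon U \to \mathrm{SL}(2,\mathbb{C})$ solving $\rho_u(\mu_j) = \varepsilon_j A_j(u)\left(\begin{smallmatrix} e^{u_j/2} & 1 \\ 0 & e^{-u_j/2}\end{smallmatrix}\right) A_j(u)^{-1}$, using that $u \mapsto \rho_u(\mu_j) = s(\rho_u)(\mu_j)$ is itself analytic (the section $s$ and the assignment $u \mapsto \rho_u$ are analytic by construction).

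**The main obstacle.**

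The step I expect to be the crux is the careful handling of the degeneration $u_j \to 0$: one must verify that the off-diagonal entry can be \emph{uniformly} normalized to $1$ across the whole neighborhood, including through the parabolic locus, so that $A_j$ extends analytically over $u_j = 0$ rather than merely being defined for $u_j \neq 0$. This is exactly why the model matrix is taken with a fixed $1$ in the corner rather than in diagonal form; establishing that the resulting conjugation problem is still a submersion at the degenerate point — equivalently, that the centralizer of $\rho_0(\mu_j)$ behaves well under the deformation and does not cause the rank to drop — is the technical heart of the lemma. The remaining bookkeeping (fixing the sign $\varepsilon_j$, which is locally constant in $u$, and shrinking $U$) is routine.
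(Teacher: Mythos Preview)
The paper does not actually prove this lemma; it is quoted verbatim as Lemma~B.1.6 from Boileau--Porti \cite{BoPo} and used as input for the discussion of hyperbolic Dehn filling, with no argument supplied in the text. So there is no proof in the paper against which to compare your proposal.

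That said, your sketch is the standard argument one finds in the cited source. The essential points are exactly the ones you isolate: (i) at $u=0$ the peripheral holonomy is parabolic, conjugate to $\varepsilon_j\left(\begin{smallmatrix}1&1\\0&1\end{smallmatrix}\right)$; (ii) the upper-triangular model $\left(\begin{smallmatrix}e^{u_j/2}&1\\0&e^{-u_j/2}\end{smallmatrix}\right)$ is chosen precisely because it specializes to the parabolic form at $u_j=0$ while being conjugate to the diagonal form for $u_j\neq 0$, so a single analytic conjugating family can pass through the parabolic locus; (iii) analyticity of $A_j$ follows from the implicit function theorem applied to the conjugation map, using that $u\mapsto\rho_u(\mu_j)$ is analytic via the section $s$. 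Your identification of the ``main obstacle'' --- that the submersion condition must be checked at the degenerate point $u_j=0$, where the centralizer of the model matrix jumps --- is correct and is indeed the only place requiring care; the computation there amounts to checking that the orbit map through the parabolic element still has the expected rank transverse to the upper-triangular slice, which is a direct linear-algebra verification.
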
 
while the commutativity between ${{\lambda }_{j}}$ and ${{\mu }_{j}}$ implies the following: 
\begin{lemma}[Lemma B.1.7 in \cite{BoPo}]
There exist unique analytic functions ${{v}_{j}},{{\tau }_{j}}:U\to \mathbb{C}$ such that ${{v}_{j}}\left( 0 \right)=0$ and, for every $u\in U$,
	\[{{\rho }_{u}}\left( {{\lambda }_{j}} \right)=\pm {{A}_{j}}\left( u \right)\left( \begin{matrix}
   {{e}^{{{{v}_{j}}\left( u \right)}/{2}\;}} & {{\tau }_{j}}\left( u \right)  \\
   0 & {{e}^{-{{{v}_{j}}\left( u \right)}/{2}\;}}  \\
\end{matrix} \right){{A}_{j}}{{\left( u \right)}^{-1}}.\]
In addition:
\begin{enumerate}
\item ${{\tau }_{j}}\left( 0 \right)\in \mathbb{C}-\mathbb{R}$;
\item $\text{sinh}\left( {{{v}_{j}}}/{2}\; \right)={{\tau }_{j}}\text{sinh}\left( {{{u}_{j}}}/{2}\; \right)$;
\item ${{v}_{j}}$ is odd in ${{u}_{j}}$ and even in ${{u}_{l}}$, for $l\ne j$;
\item ${{v}_{j}}\left( u \right)={{u}_{j}}\left( {{\tau }_{j}}\left( u \right)+O\left( {{\left| u \right|}^{2}} \right) \right)$.
\end{enumerate}
\end{lemma}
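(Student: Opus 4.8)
\emph{Proof strategy.}
The plan is to extract everything from the single relation $\mu_{j}\lambda_{j}=\lambda_{j}\mu_{j}$ available inside $\pi_{1}\!\left( T_{j}^{2} \right)$, combined with the normal form for $\rho_{u}\!\left( \mu_{j} \right)$ from the preceding lemma. After possibly shrinking $U$ to a connected neighborhood of the origin, set
\[ M_{j}\left( u \right):=\left( \begin{matrix} e^{u_{j}/2} & 1 \\ 0 & e^{-u_{j}/2} \end{matrix} \right),\qquad B_{j}\left( u \right):=A_{j}\left( u \right)^{-1}\rho_{u}\!\left( \lambda_{j} \right)A_{j}\left( u \right). \]
Since $\rho_{u}$ is a homomorphism, $\rho_{u}\!\left( \mu_{j} \right)$ and $\rho_{u}\!\left( \lambda_{j} \right)$ commute for every $u\in U$; conjugating this relation by $A_{j}\left( u \right)^{-1}$ and invoking the preceding lemma (the central factor $\varepsilon_{j}=\pm 1$ drops out) shows that $B_{j}\left( u \right)$ commutes with $M_{j}\left( u \right)$ throughout $U$. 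Moreover $B_{j}$ is analytic, being assembled from the analytic maps $A_{j}$, the analytic dependence $u\mapsto\rho_{u}$, and matrix inversion. Thus everything comes down to describing the centralizer of $M_{j}\left( u \right)$ in $\text{SL}\left( 2,\mathbb{C} \right)$ analytically in $u$, and then reading off the four claimed properties.

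On the locus $u_{j}\ne 0$ the matrix $M_{j}\left( u \right)$ is regular semisimple, with $e^{u_{j}/2}$-eigenline spanned by $\left( 1,0 \right)^{\text{T}}$; anything commuting with it preserves that line, so its centralizer is $\left\{ \alpha I+\beta M_{j}\left( u \right) \right\}\cap\text{SL}\left( 2,\mathbb{C} \right)$, all of whose members are upper triangular. Since the $\left( 2,1 \right)$-entry of $B_{j}$ is analytic on $U$ and vanishes on the dense open set $\left\{ u_{j}\ne 0 \right\}$, it vanishes identically, so $B_{j}\left( u \right)$ is upper triangular of determinant $1$ with analytic entries. At $u=0$, where $B_{j}\left( 0 \right)$ commutes with the unipotent $M_{j}\left( 0 \right)$ (preceding lemma at $u=0$), its $\left( 1,1 \right)$-entry is $\pm 1$; denote this sign $\varepsilon_{j}'$, and on a possibly smaller neighborhood of the origin define
\[ v_{j}\left( u \right):=2\log\!\bigl( \varepsilon_{j}'\left( B_{j}\left( u \right) \right)_{11} \bigr)\ \text{(principal branch)},\qquad \tau_{j}\left( u \right):=\varepsilon_{j}'\left( B_{j}\left( u \right) \right)_{12}. \]
These are analytic, $v_{j}\left( 0 \right)=0$, and unwinding the definitions gives the asserted factorization $\rho_{u}\!\left( \lambda_{j} \right)=\varepsilon_{j}'A_{j}\left( u \right)\left( \begin{matrix} e^{v_{j}/2} & \tau_{j} \\ 0 & e^{-v_{j}/2} \end{matrix} \right)A_{j}\left( u \right)^{-1}$. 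Uniqueness follows by comparing matrix entries: any competing pair with $v_{j}'\left( 0 \right)=0$ must share the sign $\varepsilon_{j}'$ (forced by the diagonal at $u=0$), hence $\tau_{j}'=\tau_{j}$ and $e^{v_{j}'/2}=e^{v_{j}/2}$, so $v_{j}-v_{j}'$ is a continuous $4\pi i\mathbb{Z}$-valued function on the connected set $U$ vanishing at the origin, thus identically $0$.

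Three of the four properties are then short. For (1): at $u=0$ the factorization and the preceding lemma exhibit $\rho_{0}\!\left( \mu_{j} \right),\rho_{0}\!\left( \lambda_{j} \right)$ as conjugate in $\text{PSL}\left( 2,\mathbb{C} \right)$ to the translations $z\mapsto z+1$ and $z\mapsto z+\tau_{j}\left( 0 \right)$, so their image is conjugate to the group of translations by $\mathbb{Z}+\mathbb{Z}\tau_{j}\left( 0 \right)\subset\mathbb{C}$; as $\rho_{0}$ is the holonomy of a complete finite-volume structure, this peripheral subgroup must be discrete of rank $2$, which forces $\tau_{j}\left( 0 \right)\notin\mathbb{R}$. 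For (2): on $\left\{ u_{j}\ne 0 \right\}$ one has $B_{j}\left( u \right)=\alpha I+\beta M_{j}\left( u \right)$, and since $\det B_{j}=1$ its diagonal entries $\alpha+\beta e^{\pm u_{j}/2}$ equal $\varepsilon_{j}'e^{\pm v_{j}/2}$ while its $\left( 1,2 \right)$-entry $\beta$ equals $\varepsilon_{j}'\tau_{j}$; subtracting the two diagonal equations and dividing by $2\varepsilon_{j}'$ gives $\sinh\!\left( v_{j}/2 \right)=\tau_{j}\sinh\!\left( u_{j}/2 \right)$, which then holds on all of $U$ by continuity. For (4): writing $\sinh\!\left( w/2 \right)=(w/2)\,\phi\left( w \right)$ with $\phi$ entire, even and $\phi\left( w \right)=1+O\!\left( w^{2} \right)$, identity (2) reads $v_{j}\,\phi\left( v_{j} \right)=\tau_{j}u_{j}\,\phi\left( u_{j} \right)$; dividing by $u_{j}$ and using $v_{j}=O\!\left( |u| \right)$ (as $v_{j}\left( 0 \right)=0$) together with $\phi$ being a unit near the origin gives $v_{j}/u_{j}=\tau_{j}\,\phi\left( u_{j} \right)\phi\left( v_{j} \right)^{-1}=\tau_{j}+O\!\left( |u|^{2} \right)$.

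I expect the genuine obstacle to be the parity statement (3). Heuristically it reflects the symmetry ``change the sign of $u_{l}$'': the substitution $u_{l}\mapsto -u_{l}$ interchanges the two eigenlines of $\rho_{u}\!\left( \mu_{l} \right)$ while, for $l\ne j$, fixing those of $\rho_{u}\!\left( \mu_{j} \right)$, so tracking the eigenvalue $e^{v_{j}/2}$ of $B_{j}$ through this substitution should send $v_{j}\mapsto -v_{j}$ when $l=j$ and fix $v_{j}$ when $l\ne j$, which is exactly (3). Making this precise forces one to choose the neighborhood $U$, the section, and the normalizing matrices $A_{j}$ compatibly with these sign changes, so that the element conjugating $\rho_{u}$ to its sign-changed counterpart can be taken analytically and the eigenline bookkeeping is unambiguous; this is the delicate part. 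A fallback would be to prove directly that $\tau_{j}$ is even in each $u_{l}$ separately --- equivalently, that $v_{j}$ is, up to an additive constant, the partial derivative $\partial\Phi/\partial u_{j}$ of a potential $\Phi$ even in every variable --- whereupon (3) drops out of (2).
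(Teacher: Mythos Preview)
The paper does not contain a proof of this lemma: it is stated as Lemma B.1.7 from \cite{BoPo} and used without argument, the text moving immediately to the definition of generalized Dehn filling coefficients. So there is no in-paper proof to compare your proposal against.

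On its own merits, your argument for the existence, uniqueness, and properties (1), (2), (4) is sound and is essentially the standard one: conjugate so that $\rho_u(\mu_j)$ is in upper-triangular normal form, use commutativity to force $\rho_u(\lambda_j)$ upper triangular as well, and read off $v_j,\tau_j$ from the entries. The identity (2) and the asymptotic (4) drop out exactly as you say.

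The gap you yourself flag is real: you do not prove (3). Your heuristic about the sign-change symmetry $u_l\mapsto -u_l$ is the right intuition, but turning it into a proof requires knowing that the section $s:V\to\text{Hom}(\pi_1(M),\text{SL}(2,\mathbb{C}))$ and the conjugating matrices $A_j(u)$ can be chosen equivariantly with respect to these involutions, and that is not something you can extract from the commutation relation alone. In \cite{BoPo} this parity is obtained by going back to how the coordinates $u_j$ are actually constructed from traces (so that $u_j\mapsto -u_j$ corresponds to swapping the two square roots of $\text{tr}^2\rho_u(\mu_j)-4$), which makes the symmetry manifest at the level of the character variety rather than at the level of a particular lift. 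Your ``fallback'' via a potential $\Phi$ even in every variable is in fact close to what happens there (the Neumann--Zagier potential plays this role), but establishing that requires substantially more input than the bare statement of the preceding lemma. As written, the proposal proves everything except (3).
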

We are finally set to define the generalized Dehn filling coefficients:

\begin{definition}[Thurston \cite{Thurston}] 
For $u \in U$ we define the \textit{generalized Dehn filling coefficients}\index{generalized Dehn filling coefficient} of the $j$-th cusp $\left( {{p}_{j}},{{q}_{j}} \right)\in {{\mathbb{R}}^{2}}\cup \left\{ \infty  \right\}\cong {{S}^{2}}$ by the formula

$$\left\{ \begin{matrix}
   \left( {{p}_{j}},{{q}_{j}} \right)  & =\infty , & \text{if  }{{u}_{j}}=0  \\
   {{p}_{j}}{{u}_{j}}+{{q}_{j}}{{v}_{j}} & =2\pi \sqrt{-1} & \text{if  }{{u}_{j}}\ne 0.  
\end{matrix} \right.$$
\end{definition}
These coefficients are well-defined and the map 
\begin{align*}
U &  \to {{S}^{2}}\times \cdots \times {{S}^{2}} \\
u & \mapsto \left( \left( {{p}_{1}},{{q}_{1}} \right),\ldots ,\left( {{p}_{k}},{{q}_{k}} \right) \right)
\end{align*}
defines a homeomorphism between $U$ and a neighborhood of $\left\{ \infty ,\ldots ,\infty  \right\}$.

\begin{remark}
If ${{p}_{j}},{{q}_{j}}\in \mathbb{Z}$ are coprime, then the completion at the $j$-th torus is a non-singular hyperbolic 3-manifold, which topologically is the Dehn filling with surgery meridian ${{p}_{j}}{{\mu }_{j}}+{{q}_{j}}{{\lambda }_{j}}$. One may also perform $\left( p,q \right)$-Dehn surgery also when $p$ and $q$ are not necessarily coprime integers; this refers to \textit{orbifold Dehn surgery}\index{orbifold Dehn surgery}\index{Dehn surgery!orbifold}, as in \cite{DuMe}.  For instance, $\left( p,0 \right)$-Dehn surgery on a knot $K\subset {{S}^{3}}$ provides an orbifold with base ${{S}^{3}}$ and singular set the knot $K$ with cone angle ${2\pi }/{p}\;$. 
\end{remark}

The statement of the theorem is the following:

\begin{theorem}[Hyperbolic Dehn filling theorem\index{hyperbolic Dehn filling theorem}, Thurston \cite{Thurston}]\label{Thurston_Dehn_hyp}
Let $M$ be a compact 3-manifold with boundary $\partial M=T_{1}^{2}\cup \cdots \cup T_{k}^{2}$, a non-empty union of tori, whose interior $\text{int}\left( M \right)$ is complete hyperbolic with finite volume. There exists a neighborhood of $\left\{ \infty ,\ldots \infty  \right\}$ in ${{S}^{2}}\times \cdots \times {{S}^{2}}$, such that the complete hyperbolic structure on $\text{int}\left( M \right)$  has a space of hyperbolic deformations parameterized by the generalized Dehn filling coefficients in this neighborhood. 
\end{theorem}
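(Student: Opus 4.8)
The plan is to combine Thurston's deformation theory for $(\text{PSL}(2,\mathbb{C}),\mathbb{H}^{3})$-structures with the explicit holonomy normal forms of Lemmas~B.1.6 and~B.1.7 and with the homeomorphism between $U$ and a neighbourhood $\mathcal{N}$ of $\{\infty,\dots,\infty\}$ recorded just above. I would begin by pinning down the local structure of $\mathsf{\mathcal{X}}(M)$ at $\chi_{0}$: a ``half lives, half dies'' argument --- Poincar\'{e} duality for $M$ together with the long exact sequence of the pair $(M,\partial M)$ --- shows that the restriction map $H^{1}(\pi_{1}M;\text{Ad}\,\rho_{0})\to\bigoplus_{j=1}^{k}H^{1}(\pi_{1}T_{j}^{2};\text{Ad}\,\rho_{0})$ has half-dimensional image, so that $\mathsf{\mathcal{X}}(M)$ is, near $\chi_{0}$, a smooth complex manifold of dimension $k$ on which $\chi\mapsto(u_{1}(\chi),\dots,u_{k}(\chi))$ has bijective differential; shrinking $U$, this legitimises the family $\rho_{u}$ and the analytic data $A_{j},v_{j},\tau_{j}$. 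I would then pass from holonomies to geometric structures by the Ehresmann--Thurston principle: since $\text{int}(M)$ is open, the holonomies of (a priori incomplete) hyperbolic structures on $\text{int}(M)$ form an open set in $\text{Hom}(\pi_{1}M,\text{PSL}(2,\mathbb{C}))$ on which the structures depend continuously, so after shrinking $U$ once more every $\rho_{u}$ is the holonomy of a hyperbolic structure $\mathcal{S}_{u}$ on $\text{int}(M)$, with $\mathcal{S}_{0}$ the given complete one.

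Next I would describe the geometry of each end of $\mathcal{S}_{u}$ when $u_{j}\neq0$. Conjugating by $A_{j}(u)$ as in Lemma~B.1.6, the holonomies $\rho_{u}(\mu_{j})$ and $\rho_{u}(\lambda_{j})$ become upper triangular with common fixed point $\infty$; for $u_{j}\neq0$ the matrix $\rho_{u}(\mu_{j})$ has distinct eigenvalues $e^{\pm u_{j}/2}$, so it is loxodromic with axis the geodesic from $0$ to $\infty$, and by Lemma~B.1.7 the same axis is preserved by $\rho_{u}(\lambda_{j})$. Hence the developing image of a suitable neighbourhood of the $j$-th end is the quotient, by the rank-two group $\langle\rho_{u}(\mu_{j}),\rho_{u}(\lambda_{j})\rangle$, of a solid-torus-type neighbourhood of that axis with the axis itself deleted. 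Using that $\mathcal{S}_{u}$ approaches $\mathcal{S}_{0}$ geometrically as $u\to0$ --- so that for $u$ small the relevant portion of the developing map of $\mathcal{S}_{u}$ is a small perturbation of that of the model cusp --- one shows this action is free and properly discontinuous there and that the quotient embeds, so that the end of $\mathcal{S}_{u}$ is, metrically, an incomplete tube around a (still missing) core curve.

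I would then identify the metric completion. Adjoining the core to the tube yields a complete structure precisely when the rotational part of the holonomy about the core closes up, i.e. when there is a class $p_{j}\mu_{j}+q_{j}\lambda_{j}\in H_{1}(T_{j}^{2})$ whose holonomy is a rotation by $2\pi$ about the axis with zero translation length; since $\mu_{j}$ and $\lambda_{j}$ have complex lengths $u_{j}$ and $v_{j}$, this is exactly the equation $p_{j}u_{j}+q_{j}v_{j}=2\pi\sqrt{-1}$, i.e. the generalized Dehn filling coefficients of the Definition above. When $(p_{j},q_{j})$ are coprime integers the completion is a smooth Dehn filling with core a closed geodesic; for non-integral or non-coprime $(p_{j},q_{j})$ one instead fills in a cone circle and obtains a hyperbolic cone manifold or orbifold, as in \cite{DuMe}. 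Assembling these fillings over the $k$ ends gives, for each $u\in U$, a complete hyperbolic (cone) manifold deforming $\text{int}(M)$.

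Finally, composing $u\mapsto(\text{completion of }\mathcal{S}_{u})$ with the inverse of the homeomorphism $U\to\mathcal{N}$, $u\mapsto((p_{1},q_{1}),\dots,(p_{k},q_{k}))$, produces a family of hyperbolic deformations indexed by $\mathcal{N}$ and sending $\{\infty,\dots,\infty\}$ to the original complete structure; since distinct $u$ give distinct characters and each $u_{j}$ is recovered from $\mathcal{S}_{u}$ as the complex length of $\mu_{j}$, distinct points of $\mathcal{N}$ give genuinely non-isometric deformations, so this is a faithful parameterization and the theorem follows. The main obstacle is the geometric analysis of the ends in the two middle paragraphs: obtaining, \emph{uniformly} for $u$ near $0$, the discreteness of the end groups, the embeddedness of the tube quotients, and the precise shape of the completion --- this is the genuinely analytic content of Thurston's hyperbolic Dehn surgery, carried out via the cusp-deformation estimates of \cite{Thurston} or the argument of Zhou \cite{Zhou1} as presented in Appendix~B of \cite{BoPo}, while the first and last paragraphs are the deformation-theoretic framework together with the linear-algebra bookkeeping already in place through Lemmas~B.1.6--B.1.7 and the homeomorphism above.
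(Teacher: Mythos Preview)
Your proposal is correct and follows essentially the same three-step architecture the paper sketches: (i) algebraic deformation of holonomies near $\chi_0$ (your half-lives-half-dies paragraph, corresponding to the paper's first step and to Lemmas~B.1.6--B.1.7), (ii) passage to generalized Dehn filling coefficients via the homeomorphism $U\to\mathcal{N}$, and (iii) construction of developing maps $D_u$ with holonomy $\rho_u$ whose completions are governed by those coefficients. The only cosmetic difference is that you package step~(iii) via the Ehresmann--Thurston principle plus a separate end analysis, whereas the paper (following \cite{BoPo} and Zhou \cite{Zhou1}) phrases it as a direct construction of the developing maps; the substance is the same, and you correctly flag the uniform end estimates as the analytic heart of the argument.
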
     

The first major step in the proof involves the construction of the algebraic deformation of the holonomies around each boundary component of the manifold $M$. The second step is to associate generalized Dehn filling coefficients to the aforementioned deformation. The third and final step in the proof involves the construction of the developing maps with the given holonomies. In particular, let ${{D}_{0}}:\widetilde{\text{int}\left( M \right)} \to {{\mathbb{H}}^{3}}$ be the developing map for the complete structure on $\text{int}\left( M \right)$ with holonomy ${{\rho }_{0}}$. Then, for each $u\in U$, there is a developing map  ${{D}_{u}}:\widetilde{\text{int}\left( M \right)} \to {{\mathbb{H}}^{3}}$ with holonomy  ${{\rho }_{u}}$, such that the completion of  $\text{int}\left( M \right)$ is given by the generalized Dehn filling coefficients of $u$. 

We remark here that the family of maps ${{\left\{ {{D}_{u}} \right\}}_{u\in U}}$ is continuous in $u$ in the compact ${{\mathsf{\mathcal{C}}}^{1}}$-topology and that the result above shows not only the existence of a 1-parameter family of cone 3-manifold structures, but also gives a path of corresponding holonomies in the representation variety $\text{Hom}\left( {{\pi }_{1}}(M),\text{SL}\left( 2,\mathbb{C} \right) \right)$.

\subsection{Haken manifolds and Thurston's Uniformization}

The notion of \textit{Haken manifold}\index{Haken manifold} involves a large class of closed 3-manifolds and play an important role in the study of the topology of 3-manifolds. These were introduced by Wolfgang Haken \cite{Haken1} as a class of compact irreducible 3-manifolds containing incompressible surfaces, for which  he showed in \cite{Haken2} that they admit a hierarchy to a union of 3-balls by cutting along essential embedded surfaces. This property allows one to produce certain statements for Haken manifolds using an induction process. Let us next state these definitions more rigorously:

\begin{definition}\label{incompressible}
Let $M$ be a 3-manifold. A properly embedded surface $\Sigma \subset M$ is called \textit{incompressible}\index{incompressible} if the map between fundamental groups $\pi_{1}\left(\Sigma \right) \to \pi_{1}\left(M \right)$ is injective. Otherwise, the surface is called \textit{compressible}. A torus in an irreducible 3-manifold is compressible if and only if it bounds a solid torus.
\end{definition}

\begin{definition}\label{Haken_manifold}
A compact orientable 3-manifold $M$ is called a \textit{Haken manifold} if it is irreducible and contains an orientable incompressible surface $\Sigma \subset M$.
\end{definition}

In \cite{Haken2}, Haken associated a notion of complexity to a Haken manifold, which decreases when one cuts the Haken manifold along an incompressible surface; this can be iterated in order to reduce the complexity until we obtain 3-balls. This approach was a key ingredient in the proof of the Waldhausen theorem\index{Waldhausen theorem} showing that closed Haken manifolds are topologically characterized by their fundamental groups:

\begin{theorem}[Waldhausen, Corollary 6.5 in \cite{Wald}]\label{Waldhausen's}
Let $M$ and $M'$ be two Haken manifolds and let $\pi_{1}\left(M\right) \to \pi_{1}\left(M' \right)$ be an isomorphism between their fundamental groups. Then $M$ and $M'$ are homeomorphic. 
\end{theorem}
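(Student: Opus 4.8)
The plan is to prove the stronger relative statement, due to Waldhausen, that any map $f\colon (M',\partial M')\to (M,\partial M)$ of Haken manifolds inducing an isomorphism $\pi_1(M')\to\pi_1(M)$ and (in the bounded case, respecting a suitable boundary pattern) an isomorphism on the boundary subgroups is homotopic to a homeomorphism; the closed case asserted above is the special case $\partial M=\partial M'=\emptyset$. The first step is to produce such an $f$ at all. Since $M$ is irreducible with infinite fundamental group, the Sphere Theorem gives $\pi_2(M)=0$, and because the universal cover is then a simply connected acyclic open $3$-manifold it is contractible, so $M$ is aspherical; the same applies to $M'$. Hence the given isomorphism $\pi_1(M)\to\pi_1(M')$ is realized by a homotopy equivalence $f\colon M\to M'$, unique up to homotopy, which one first adjusts so that it carries $\partial M$ into $\partial M'$.

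The core of the argument is an induction on the length $n$ of a Haken hierarchy $M=M_0,M_1,\ldots ,M_n$, where $M_{i+1}$ is obtained from $M_i$ by cutting along a two-sided incompressible surface and $M_n$ is a disjoint union of balls, each carrying the boundary pattern recorded by the previous cuts. The base case is the direct verification of the relative theorem for balls with admissible boundary patterns. For the inductive step, let $\Sigma\subset M$ be the first cutting surface. Using that $f\vert_{\Sigma}$ is $\pi_1$-injective (as $\Sigma$ is incompressible and $f_{*}$ is injective), the Loop Theorem and Dehn's Lemma of Papakyriakopoulos allow one to homotope $f$ so that $f\vert_{\Sigma}$ becomes an embedding onto an incompressible surface $\Sigma'\subset M'$, and, after a further transverse homotopy, so that $f^{-1}(\Sigma')=\Sigma$. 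Cutting along $\Sigma$ and $\Sigma'$ yields Haken manifolds $M\vert\Sigma$ and $M'\vert\Sigma'$ with strictly shorter hierarchies, together with a restricted homotopy equivalence between them respecting the induced boundary patterns; by the inductive hypothesis this restriction is homotopic to a homeomorphism $\hat h$.

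It remains to reassemble $\hat h$ into a homeomorphism $M\to M'$. Here one invokes the two-dimensional result that homotopic homeomorphisms of a compact surface are isotopic, together with its rel-boundary refinement, to correct $\hat h$ on the two copies of $\Sigma$ appearing in $\partial(M\vert\Sigma)$ so that it becomes compatible with the identifications that recover $M$ from $M\vert\Sigma$ and $M'$ from $M'\vert\Sigma'$. The corrected map then descends to a homeomorphism $h\colon M\to M'$, which closes the induction. The main obstacle throughout is precisely this interface: passing from a $\pi_1$-injective map of a surface to an honest embedding via the Papakyriakopoulos theorems, and then propagating the boundary patterns and the rel-boundary uniqueness statements consistently through every stage of the hierarchy — without this bookkeeping the induction fails to close. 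By contrast, the asphericity input in the first step and the surface isotopy theorem used in the regluing are comparatively routine.
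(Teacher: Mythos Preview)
The paper does not give a proof of this theorem at all: it is stated purely as background (Theorem~\ref{Waldhausen's}) with a citation to Waldhausen's original paper, and is then invoked only to motivate the hierarchy used in Thurston's uniformization. There is therefore nothing in the paper to compare your argument against.

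That said, your sketch is a fair outline of Waldhausen's actual proof. A couple of points deserve tightening. First, in the inductive step the usual order is the reverse of what you wrote: one starts with an incompressible surface $\Sigma'\subset M'$ (the target is the Haken manifold with the hierarchy), makes $f$ transverse to it, and then uses the Loop Theorem/Dehn's Lemma together with irreducibility to surger $f^{-1}(\Sigma')$ into an incompressible surface $\Sigma\subset M$ with $f|_{\Sigma}$ a homotopy equivalence onto $\Sigma'$; only then does one invoke the induction. Your version, pushing a chosen $\Sigma\subset M$ forward to an embedded $\Sigma'$, is harder to justify directly. Second, the closed case requires an extra observation you leave implicit: in Waldhausen's framework the hypothesis is really on the \emph{target} $M'$ being Haken, and asphericity of \emph{both} manifolds (needed to realize the group isomorphism by a map) uses irreducibility plus infinite $\pi_1$, which is part of the Haken hypothesis on each side. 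These are refinements rather than gaps; the overall strategy you describe is the classical one.
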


An algorithm to determine whether a 3-manifold is Haken was given by  Jaco and Oertel \cite{JaOe}. Thurston's studies of various examples of 3-manifolds admitting complete hyperbolic metrics lead to his proof of a ``uniformization theorem'' satisfied by this large class of Haken manifolds:

\begin{theorem}[Uniformization Theorem for Haken manifolds\index{uniformization theorem for Haken manifolds}, Thurston \cite{Thurston}]\label{Thurston's_Uniformization}
Any atoroidal Haken manifold $M$ admits a hyperbolic structure. By atoroidal here is meant that any embedded incompressible torus is boundary parallel, that is, it can be isotoped into a boundary component of $M$.
\end{theorem}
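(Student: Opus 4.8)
The plan is to argue by induction on the length of a \emph{Haken hierarchy} for $M$. Since $M$ is irreducible and contains an orientable incompressible surface, Haken's theorem (quoted before Definition~\ref{Haken_manifold}) furnishes a finite sequence of splittings $M = M_{0} \supset M_{1} \supset \cdots \supset M_{n}$, in which each $M_{i+1}$ is obtained from $M_{i}$ by cutting along a properly embedded two-sided incompressible surface $S_{i}$, and $M_{n}$ is a disjoint union of handlebodies. The base case is trivial: a handlebody is the convex core of a geometrically finite hyperbolic manifold. The inductive hypothesis I would carry is the stronger statement that each component of $M_{i+1}$ admits a geometrically finite hyperbolic structure whose convex core is bounded by a totally geodesic surface away from a prescribed system of annuli and tori (the \emph{parabolic locus}) along which the structure is cusped; the content of the induction step is to reconstruct such a structure on $M_{i}$ from one on $M_{i+1}$, keeping careful track of how the parabolic locus propagates.

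I would split the induction step into two model situations according to the nature of $S_{i}$. In the generic case, cutting along $S_{i}$ either disconnects $M_{i}$ or produces $M_{i+1}$ with two boundary subsurfaces that are identified in $M_{i}$, and $M_{i+1}$ is \emph{not} an interval bundle; this is handled by the skinning-map fixed-point method below. The exceptional case is when $S_{i}$ is a fiber, so that $M_{i}$ is a surface bundle (or semibundle) over the circle; since $M$ is atoroidal the monodromy is pseudo-Anosov, and this case requires the separate dynamical argument below. That these two cases exhaust the possibilities, and that atoroidality forces the monodromy to be pseudo-Anosov, is a standard consequence of the incompressibility and boundary-irreducibility of the surfaces in the hierarchy.

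For the generic case I would use Ahlfors--Bers deformation theory to identify the space of geometrically finite structures on $M_{i+1}$ compatible with the parabolic locus with the Teichm\"{u}ller space $\mathcal{T}(\partial_{0}M_{i+1})$ of its non-cusped boundary. The \emph{skinning map} $\sigma : \mathcal{T}(\partial_{0}M_{i+1}) \to \mathcal{T}(\overline{\partial_{0}M_{i+1}})$ assigns to a structure the conformal data ``on the other side'' of each boundary surface, read off from the cover of the hyperbolic manifold corresponding to that boundary subgroup. Gluing $M_{i+1}$ back up to obtain $M_{i}$ amounts to finding a fixed point of $\tau \circ \sigma$, where $\tau$ is the orientation-reversing gluing homeomorphism acting on Teichm\"{u}ller space: such a fixed point yields matching conformal boundary data along $S_{i}$, and then Maskit combination together with the quasiconformal interpolation supplied by the $\lambda$-lemma produces a geometrically finite hyperbolic structure on $M_{i}$, whose conformal boundary is finally pushed onto the convex core by Thurston's deformation argument. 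To produce the fixed point I would invoke Thurston's \emph{Bounded Image Theorem} --- for $M_{i+1}$ not an interval bundle, $\sigma$ has relatively compact image --- and combine it with a normalized-iteration argument exploiting contractivity properties of $\tau \circ \sigma$ on the contractible space $\mathcal{T}(\partial_{0}M_{i+1})$.

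For the fibered case I would prove hyperbolicity of the mapping torus of the pseudo-Anosov monodromy $\varphi$ directly via Thurston's \emph{Double Limit Theorem}: iterating $\varphi$ on a basepoint of $\mathcal{T}(S)$ produces a sequence of quasi-Fuchsian groups whose two ends converge, respectively, to the stable and unstable measured laminations of $\varphi$; the Double Limit Theorem then yields an algebraically convergent subsequence whose limit is a $\varphi$-invariant doubly degenerate Kleinian surface group, and adjoining $\varphi$ gives a discrete faithful representation of $\pi_{1}(M_{i})$. The analytic heart of the whole argument --- and the step I expect to be by far the hardest --- is precisely the pair of compactness statements, the Bounded Image Theorem in the generic case and the Double Limit Theorem in the fibered case. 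Both demand delicate control over degeneration of hyperbolic structures: one must rule out that a sequence whose conformal boundary escapes in Teichm\"{u}ller space survives algebraically in a way incompatible with the manifold being an interval bundle, which rests on the interplay between algebraic and geometric limits of Kleinian groups, efficiency estimates for pleated surfaces, and the absence of new (accidental) parabolics. It is here that the atoroidal hypothesis is used in an essential way: it is exactly what forbids the essential tori that would otherwise appear as accidental parabolics or in geometric limits and obstruct the passage to the limit, and tracking the parabolic locus through each stage is what allows the induction to close.
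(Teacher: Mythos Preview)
Your outline is precisely the strategy the paper sketches---induct along a Haken hierarchy and reglue hyperbolic structures stage by stage---and you supply exactly the details (skinning map plus Bounded Image Theorem in the non-fibered case, Double Limit Theorem in the fibered case) for which the paper simply refers the reader to Morgan and Wall; the paper also records the alternative route via Perelman's Ricci flow with surgery. One minor discrepancy worth fixing: the paper, following Haken, has the hierarchy terminate in a union of $3$-balls rather than handlebodies; in the pared-manifold induction the base pieces are balls or $I$-bundles, and general handlebodies with compressible boundary would complicate the Ahlfors--Bers parametrization you invoke, so adjust your base case accordingly.
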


Thurston's proof uses the hierarchy property of Haken manifolds. By the Waldhausen theorem, (a Haken manifold) $M$ can be decomposed into a finite sum of closed balls $B^{3}$ by incompressible surfaces; in other words, there exists a sequence of manifolds with boundary 
\[M\mapsto {{M}_{1}}\mapsto \ldots \mapsto {{B}^{3}}\cup \ldots \cup {{B}^{3}}.\]
Then, starting with hyperbolic structures on the balls $B^{3}$ we may get a hyperbolic structure by gluing at each step in this sequence from these balls back to $M$. A full proof of this theorem was never published by Thurston; fairly detailed outlines of the proof can be found in the articles by Morgan \cite{Morgan} or Wall \cite{Wall}. It also follows from Perelman's proof of the more general geometrization conjecture of Thurston constructing the Ricci flow with surgeries on 3-manifolds \cite{Perelman}; see also \cite{Beetal}, \cite{MoTi}. 

The geometrization conjecture evolved from Thurston's considerations that a similar uniformization theorem as for Haken manifolds should hold for all closed 3-manifolds. An important fact considered was that non-Haken manifolds do not contain incompressible surfaces, thus it is impossible to decompose those into simpler pieces. One way by which Thurston proved that non-Haken atoroidal 3-manifolds can be equipped with a hyperbolic structure was by deforming the structure of a cone manifold by increasing its cone angle. 

Furthermore, using hyperbolic Dehn surgery it is possible to obtain \textit{non-Haken manifolds, whose hyperbolicity cannot be shown by the uniformization theorem}. Such examples are not easy to construct otherwise; see Reid \cite{Reid} for explicit examples of non-Haken hyperbolic 3-manifolds with a finite cover which fibers over the circle. Moreover, deformations of hyperbolic structures can be described more concretely using the framework of hyperbolic Dehn surgery. 

In \cite{HoKe} Hodgson and Kerckhoff established  a universal upper bound on the number of non-hyperbolic Dehn surgeries per boundary torus, thus  giving a quantitative version of Thurston's hyperbolic Dehn filling theorem; see also the later article of Lackenby and Meyerhoff \cite{LaMe} on the maximal number of exceptional Dehn surgeries\index{exceptional Dehn surgery}\index{Dehn surgery!exceptional}, providing a proof to Gordon's conjecture \cite{Gordon2} on the number of exceptional slopes. For example, Dehn surgeries on the figure-eight knot produce non-Haken, hyperbolic 3-manifolds except in ten cases. For the exterior of the figure-eight knot in $S^{3}$ the exceptional surgeries, that is, the ones which do not result in a hyperbolic structure, are 
\[\left\{ \left( 1,0 \right),\left( 0,1 \right),\pm \left( 1,1 \right),\pm \left( 2,1 \right),\pm \left( 3,1 \right),\pm \left( 4,1 \right) \right\}.\]

\section{Deformations of hyperbolic structures by bending}

A deformation method of hyperbolic structures on $n$-manifolds called \textit{bending}\index{bending} is suggested by the famous ``Mickey Mouse'' example of Thurston (Example 8.7.3 in \cite{Thurston}). Given a hyperbolic structure on a genus two surface, the structure can be considered to arise from the bending of the surface along a simple closed geodesic by an angle $\frac{\pi}{2}$. If the geodesic is short enough, this will give rise to a quasi-Fuchsian group. In order to extend this idea to $n$ dimensions, the manifold is required to contain a totally geodesic submanifold of codimension one along which the bending can take place, thus defining a deformation. That there are compact hyperbolic $n$-manifolds with arbitrarily many such submanifolds was shown by Millson in \cite{Millson}.

For $n=3$, hyperbolic structures of infinite volume are related to Kleinian groups which are discrete subgroups of $\text{PSL}(2, \mathbb{C})$ acting discontinuously on part of $S^{2}$. In turn, deformations of Kleinian groups can be studied by analyzing the conformal structures on the components of  the boundary of the quotient space; a similar phenomenon occurs in higher dimensions (see  the works of Apanasov and Tetenov \cite{Apanasov}, \cite{AT}).

Christos Kourouniotis introduced in \cite{Kourouniotis1} a deformation technique of hyperbolic structures on $n$-manifolds via the construction of a family of quasiconformal homeomorphisms of the hyperbolic $(n+1)$-space. His construction of the bending homeomorphism is similar to the construction by Wolpert in \cite{Wolpert} of a homeomorphism giving rise to the Fenchel--Nielsen deformation; cf. also the work of Johnson and Millson \cite{JM} for an algebraic version of the bending deformation.

The idea in \cite{Kourouniotis1} is to construct a quasi-conformal homeomorphism compatible with a subgroup $\Gamma$ of $G_{n}$ step by step, as the infinite product of a sequence of homeomorphisms; this product is required to converge and to be compatible with $\Gamma$.

In the case of a surface, the embedded totally geodesic hypersurfaces are simple closed curves along which bending is possible. One could also extend in this case the definition of bending to the case of a geodesic lamination, as for instance in the work of Epstein and Marden \cite{EM}. Still in this surface case, Kourouniotis has studied in \cite{Kourouniotis2} the possibility of bending quasi-Fuchsian structures. Namely, for a closed surface $\Sigma$, the space $\mathcal{QF}(\Sigma)$ of quasi-Fuchsian structures on $\Sigma$ is a quotient of the space of injective homomorphisms $\rho :{{\pi }_{1}}\left( \Sigma  \right)\to \text{PSL}\left( 2,\mathbb{C} \right)$ with $\text{Im}\rho =\Gamma $ and $\Sigma \times I\cong {{{\mathbb{H}}^{3}}}/{\Gamma }\;$; Fuchsian points are classes of homomorphisms with image in $\text{PSL}\left( 2,\mathbb{R} \right)$ and correspond to hyperbolic structures on $\Sigma $. For a simple closed geodesic $\gamma \subset \Sigma $, there is a 1-parameter family of pairs $\left( {{f}_{t}},{{\rho }_{t}} \right)$, where ${{f}_{t}}:\tilde{\Sigma }\to {{\mathbb{H}}^{3}}$ and ${{\rho }_{t}}:{{\pi }_{1}}\left( \Sigma  \right)\to \text{PSL}\left( 2,\mathbb{C} \right)$, such that ${{f}_{t}}$ is ${{\rho }_{t}}$-equivariant, for every $t\ge 0$. Note that for $t=0$, ${{\rho }_{0}}$ is Fuchsian and ${{f}_{0}}$ equivariantly embeds $\tilde{\Sigma }$ as a hyperbolic plane in the hyperbolic 3-space ${{\mathbb{H}}^{3}}$. This deformation is induced by a 1-parameter family of isometries from $\text{PSL}\left( 2,\mathbb{C} \right)$. When the bending parameter $t$ is small enough, then ${{f}_{t}}$ is an embedding and ${{\rho }_{t}}$ is an isomorphism of ${{\pi }_{1}}\left( \Sigma  \right)$ onto a quasi-Fuchsian subgroup of $\text{PSL}\left( 2,\mathbb{C} \right)$. 

In \cite{Kourouniotis3}, Kourouniotis studies some quantitative aspects of this bending construction, while universal bounds on the bending lamination of a quasi-Fuchsian group, hence of the bending deformation, were obtained by Bridgeman \cite{Br1}, \cite{Br2}.

\section{Higher Teichm\"{u}ller Theory}
The newly-emerged field of higher Teichm\"{u}ller theory concerns the study of connected components of character varieties for semisimple real Lie groups that entirely consist of discrete and faithful representations. We summarize here some of the very basic topological and geometric properties of these spaces, as well as a recent unified approach to the subject introduced by Olivier Guichard and Anna Wienhard, which seems to be identifying all the cases when such components emerge.

\subsection{The Teichm\"{u}ller space}
Let ${{\Sigma }}$ be a closed connected and oriented topological surface with negative Euler characteristic $\chi \left( {{\Sigma }} \right)=2-2g<0$, for $g$ the genus of ${{\Sigma }}$. The \textit{Teichm\"{u}ller space}\index{Teichm\"{u}ller space} $\mathsf{\mathcal{T}}\left( {{\Sigma }} \right)$ of the surface ${{\Sigma }}$ is defined as the space of marked conformal classes of Riemannian metrics on ${{\Sigma }}$. The Uniformization Theorem of Riemann--Poincar\'{e}--Koebe (see \cite{SGe} for a complete account) guarantees the existence of a unique hyperbolic metric with constant curvature -1 in each conformal class. The Teichm\"{u}ller space can be thus identified with the moduli space of marked hyperbolic structures. Moreover, the mapping class group $\text{Mod}\left( {{\Sigma }} \right)$, that is, the group of all orientation-preserving diffeomorphisms of ${{\Sigma }}$ modulo the ones which are isotopic to the identity, acts naturally on $\mathsf{\mathcal{T}}\left( {{\Sigma }} \right)$ by changing the marking; this action is properly discontinuous and the quotient is the moduli space $\mathsf{\mathcal{M}}\left( {{\Sigma }} \right)$ of Riemann surfaces of topological type given by ${{\Sigma }}$. 

A well-known fact about the Teichm\"{u}ller space is that it is homeomorphic to ${{\mathbb{R}}^{6g-6}}$. There are several ways to see this. One direct way is by parameterizing $\mathsf{\mathcal{T}}\left( {{\Sigma }} \right)$ by Fenchel--Nielsen coordinates --- a complete proof may be found in \cite{Ratcliffe}, Theorem 9.7.4. Another method is to use Teichm\"{u}ller's theorem to identify $\mathsf{\mathcal{T}}\left( {{\Sigma }} \right)$ with the unit ball in the vector space $Q\left( M \right)$ of holomorphic quadratic differentials on a Riemann surface $M$ homeomorphic to ${{\Sigma }}$ --- a detailed proof can be found in \cite{Hubbard}, Theorem 7.2.1. In fact,  $\mathsf{\mathcal{T}}\left( {{\Sigma }} \right)$ can be identified with the entire vector space $Q\left( M \right)$ using Hopf differentials of harmonic maps from $M$ to a Riemann surface of topological type given by ${{\Sigma }}$ --- see \cite{Wolf} for this approach. An application of the Riemann-Roch theorem finally provides that ${{\dim}_{\mathbb{R}}}Q\left( M \right)=6g-6$, for genus $g\ge 2$; we refer, for instance, to Corollary 5.4.2 in  \cite{Jost} for a proof. 

However, what opens the way from the classical  Teichm\"{u}ller theory to what is today called \textit{Higher Teichm\"{u}ller Theory}\index{higher Teichm\"{u}ller theory}\index{Teichm\"{u}ller theory!higher} is the algebraic realization of the space $\mathsf{\mathcal{T}}\left( {{\Sigma }} \right)$ as a subspace of the moduli space of representations of the fundamental group of ${{\Sigma }}$ into the isometry group of the hyperbolic plane. This algebraic realization is conceived through the holonomy representation of a hyperbolic structure. Indeed, for $\left( M,f \right)$ a hyperbolic structure over ${{\Sigma }}$, the orientation preserving homeomorphism $f:{{\Sigma }}\to M$ induces an isomorphism of fundamental groups  ${{f}_{*}}:{{\pi }_{1}}\left( {{\Sigma }} \right)\to {{\pi }_{1}}\left( M \right)$ and ${{\pi }_{1}}\left( M \right)$ acts as the group of deck transformations by isometries on $\tilde{M}\cong {{\mathbb{H}}^{2}}$. But, since $\text{PSL}\left( 2,\mathbb{R} \right)\cong \text{Iso}{{\text{m}}^{+}}\left( {{\mathbb{H}}^{2}} \right)$, the orientation preserving isometries, it follows that this action induces a homomorphism $\rho :{{\pi }_{1}}\left( {{\Sigma }} \right)\to \text{PSL}\left( 2,\mathbb{R} \right)$ which is well-defined up to conjugation by $\text{PSL}\left( 2,\mathbb{R} \right)$. This homomorphism is called the \textit{holonomy} of the hyperbolic structure $\left( M,f \right)$. The \textit{representation variety}\index{representation variety} 
$$\mathsf{\mathcal{R}}\left( \text{PSL}\left( 2,\mathbb{R} \right) \right) := {\text{Hom}\left( {{\pi }_{1}}\left( {{\Sigma }} \right),\text{PSL}\left( 2,\mathbb{R} \right) \right)}//{\text{PSL}\left( 2,\mathbb{R} \right)}\;$$ is the largest Hausdorff quotient of all group homomorphisms $\rho :{{\pi }_{1}}\left( {{\Sigma }} \right)\to \text{PSL}\left( 2,\mathbb{R} \right)$ modulo conjugation by $\text{PSL}\left( 2,\mathbb{R} \right)$. Furthermore, representations induced by equivalent hyperbolic structures using the above approach are conjugate by an element in $\text{PSL}\left( 2,\mathbb{R} \right)$ and the converse is true. 

On the other hand, Weil in \cite{Weil} (see also Theorem 6.19 in \cite{Raghunathan}) proved that the set of discrete such embeddings 
$\left\{ {{\pi }_{1}}\left( {{\Sigma }} \right)\hookrightarrow \text{PSL}\left( 2,\mathbb{R} \right) \right\} $ is open in the quotient space $\mathsf{\mathcal{R}}\left( \text{PSL}\left( 2,\mathbb{R} \right) \right)$. This open subset is called the \textit{Fricke space}\index{Fricke space} $\mathsf{\mathcal{F}}\left( {{\Sigma }} \right)$ of the topological surface ${{\Sigma }}$. Fricke spaces first appeared in the work of Fricke and Klein \cite{FrKl} defined in terms of Fuchsian groups (see \cite{BeGa} for an expository account). 

The connected components of the representation variety $\mathsf{\mathcal{R}}\left( \text{PSL}\left( 2,\mathbb{R} \right) \right)$ are distinguished in terms of the Euler class $e\left( \rho  \right)$ of a representation $\rho $; such a topological invariant for a representation $\rho $ can be considered in the realm of the Riemann--Hilbert correspondence and the associated flat $\text{PSL}\left( 2,\mathbb{R} \right)$-bundle. 

In \cite{Goldman3}, Goldman showed that this Euler class distinguishes the connected components and takes values in  $\mathbb{Z}\cap \left[ \chi \left( {{\Sigma }} \right),-\chi \left( {{\Sigma }} \right) \right]$. In particular, the Fricke space $\mathsf{\mathcal{F}}\left( {{\Sigma }} \right)$ is identified with the component maximizing this characteristic class (consisting of representations that correspond to holonomies of hyperbolic structures on ${{\Sigma }}$). 

To conclude this discussion about the Teichm\"{u}ller space, the Uniformization Theorem implies that $\mathsf{\mathcal{F}}\left( {{\Sigma }} \right)$ and $\mathsf{\mathcal{T}}\left( {{\Sigma }} \right)$ can be identified, therefore the Teichm\"{u}ller space is a \textit{connected component} of the representation variety $\mathsf{\mathcal{R}}\left( \text{PSL}\left( 2,\mathbb{R} \right) \right)$. In fact, it is one of the two connected components entirely consisting of discrete and faithful representations  $\rho :{{\pi }_{1}}\left( {{\Sigma }} \right)\to \text{PSL}\left( 2,\mathbb{R} \right)$; the other such component is $\mathsf{\mathcal{T}}\left( {{{\bar{\Sigma }}}} \right)$, that is, the Teichm\"{u}ller space of the surface  ${{\bar{\Sigma }}}$ with the opposite orientation. 

Since the representation variety can be considered for any reductive Lie group $G$, it is natural to ask whether there are special connected components of it for \textit{higher rank Lie groups} $G$ than $\text{PSL}\left( 2,\mathbb{R} \right)$, which consist entirely of representations related to significant geometric or dynamical structures on the fixed topological surface. This question leads to the introduction of \textit{higher Teichm\"{u}ller spaces} as we shall see next.

\subsection{Higher Teichm\"{u}ller spaces}\label{subsection_hTs}
Let $\Sigma $ be a closed oriented (topological) surface of genus $g$. The fundamental group of $\Sigma $ is described by
	\[{{\pi }_{1}}\left( \Sigma  \right)=\left\langle {{a}_{1}},{{b}_{1}},\ldots ,{{a}_{g}},{{b}_{g}}\left| \prod{\left[ {{a}_{i}},{{b}_{i}} \right]=1} \right. \right\rangle, \]
where $\left[ {{a}_{i}},{{b}_{i}} \right]={{a}_{i}}{{b}_{i}}a_{i}^{-1}b_{i}^{-1}$ is the commutator. The set of all representations of ${{\pi }_{1}}\left( \Sigma  \right)$ into a connected reductive real Lie group $G$, $\text{Hom}\left( {{\pi }_{1}}\left( \Sigma  \right),G \right)$, can be naturally identified with the subset of ${{G}^{2g}}$ consisting of $2g$-tuples $\left( {{A}_{1}},{{B}_{1}},\ldots ,{{A}_{g}},{{B}_{g}} \right)$ satisfying the algebraic equation $\prod{\left[ {{A}_{i}},{{B}_{i}} \right]}=1$. The group $G$ acts on the space $\text{Hom}\left( {{\pi }_{1}}\left( \Sigma  \right),G \right)$ by conjugation
	\[\left( g\cdot \rho  \right)=g\rho \left( \gamma  \right){{g}^{-1}},\]
where $g\in G$, $\rho \in \text{Hom}\left( {{\pi }_{1}}\left( \Sigma  \right),G \right)$ and $\gamma \in {{\pi }_{1}}\left( \Sigma  \right)$, and the restriction of this action to the subspace $\text{Ho}{{\text{m}}^{\text{red}}}\left( {{\pi }_{1}}\left( \Sigma  \right),G \right)$ of reductive representations provides that the orbit space is Hausdorff. Here, by a reductive representation we mean one that composed with the adjoint representation in the Lie algebra of $G$ can be decomposed as a sum of irreducible representations. When $G$ is algebraic, this is equivalent to the Zariski closure of the image of ${{\pi }_{1}}\left( \Sigma  \right)$ in $G$ being a reductive group. Define the \emph{moduli space of reductive representations of ${{\pi }_{1}}\left( \Sigma  \right)$ into $G$} to be the orbit space
	\[\mathsf{\mathcal{R}}\left( G \right)={\text{Ho}{{\text{m}}^{\text{red}}}\left( {{\pi }_{1}}\left( \Sigma  \right),G \right)}/{G}.\]
The following theorem of Goldman \cite{Goldman4} shows that this space is a real analytic variety and so $\mathsf{\mathcal{R}}\left( G \right)$ is usually called the \emph{character variety}\index{character variety}:
\begin{theorem}[Goldman \cite{Goldman4}]
The moduli space $\mathsf{\mathcal{R}}\left( G \right)$ has the structure of a real analytic variety, which is algebraic if $G$ is algebraic and is a complex variety if $G$ is complex.
\end{theorem}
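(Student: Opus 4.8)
The plan is to follow Goldman's original argument, which rests on identifying $\mathsf{\mathcal{R}}(G)$ with a geometric invariant-theoretic quotient and then transporting the analytic/algebraic structure from the relation subvariety of $G^{2g}$. First I would observe that $\mathrm{Hom}(\pi_1(\Sigma),G)$ is cut out inside $G^{2g}$ by the single $G$-valued equation $\prod_{i=1}^g [A_i,B_i]=1$; since $G$ is a Lie group, this is a closed real-analytic subset, hence a real-analytic variety in its own right, and it is a real algebraic set when $G$ is a linear algebraic group (the commutator relation being polynomial in matrix entries and in $\det^{-1}$), and a complex analytic/algebraic variety when $G$ is complex. So the representation space itself already carries the desired structure before quotienting.

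Next I would address the quotient. The subtlety is that $\mathrm{Hom}(\pi_1(\Sigma),G)/G$ need not be Hausdorff in general, which is precisely why one restricts to the reductive (equivalently, for algebraic $G$, polystable) locus $\mathrm{Hom}^{\mathrm{red}}(\pi_1(\Sigma),G)$. Here I would invoke the slice theorem for proper actions: one shows that $G$ acts properly on $\mathrm{Hom}^{\mathrm{red}}(\pi_1(\Sigma),G)$ — the key point being that reductive representations have closed $G$-orbits and (up to passing to $G/Z(G)$, or stabilizing by the image's centralizer) well-behaved stabilizers, so by Luna's slice theorem in the algebraic case, or by the Koszul–Palais slice theorem for proper actions in the analytic case, the orbit space is locally modeled on $S/G_\rho$ where $S$ is an analytic (resp. algebraic) slice through $\rho$ and $G_\rho$ is the reductive stabilizer. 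The quotient of an affine variety by a reductive group being affine (Hilbert–Nagata), these local models are genuine real-analytic (resp. algebraic, resp. complex) varieties, and one patches them together. In the algebraic setting the cleanest route is to note that the GIT quotient $\mathrm{Hom}(\pi_1(\Sigma),G)/\!\!/G$ exists as an affine variety and that its points are in bijection with closed orbits, i.e.\ with reductive representations modulo conjugation, yielding the variety structure directly; complexity/Hausdorffness of the underlying topological space then follows from general GIT.

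For the complex case I would further remark that when $G$ is complex reductive, $\mathrm{Hom}(\pi_1(\Sigma),G)$ is an affine complex algebraic variety and the GIT quotient by the complex reductive group $G$ is again an affine complex variety, so $\mathsf{\mathcal{R}}(G)$ inherits a complex structure compatible with its real-analytic one; this also identifies it, via non-abelian Hodge theory over a choice of complex structure on $\Sigma$, with a moduli space of $G$-Higgs bundles, though that identification is not needed for the statement.

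I expect the \emph{main obstacle} to be the properness of the $G$-action on the reductive locus and the verification that the local slice models glue to a variety rather than merely a stratified space: one must handle representations with positive-dimensional stabilizer (e.g.\ those factoring through a proper reductive subgroup, or central representations), where the local model $S/G_\rho$ is a nontrivial quotient and singularities appear. This is exactly where one uses that $G_\rho$ is reductive — a consequence of $\rho$ being reductive — so that $S/\!\!/G_\rho$ is still an affine variety and the analytic structure descends. Everything else (the relation subvariety being analytic/algebraic, the patching of affine charts) is routine once this is in place.
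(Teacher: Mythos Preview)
The paper does not prove this theorem: it is stated as a cited result of Goldman \cite{Goldman4} with no accompanying argument, so there is nothing in the paper to compare your proposal against. Your sketch is the standard route and is essentially correct --- realize $\mathrm{Hom}(\pi_1(\Sigma),G)$ as the relation subvariety of $G^{2g}$, then pass to the categorical/GIT quotient on the closed-orbit locus.

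One small caution: framing the quotient step via \emph{properness} of the $G$-action on $\mathrm{Hom}^{\mathrm{red}}$ is not quite right. Even after dividing by the center, the action need not be proper on the entire reductive locus, since stabilizers of reducible reductive representations can be positive-dimensional and noncompact. What is actually true, and what you in fact use later in your sketch, is that reductive representations have \emph{closed} $G$-orbits and reductive stabilizers; this is exactly what GIT (in the algebraic case) or the Luna-type slice theorem (in the analytic case) needs to produce an affine/analytic local model $S/\!\!/G_\rho$. So drop the appeal to the Koszul--Palais slice theorem for proper actions and keep the closed-orbit/reductive-stabilizer formulation throughout; the rest of your outline then goes through.
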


Higher Teichm\"{u}ller Theory is concerned with the study of the properties of fundamental group representations lying in certain subsets of the character variety $\mathsf{\mathcal{R}}\left( G \right)$, for simple real groups $G$. An abundance of methods from geometry, gauge theory, algebraic geometry and dynamics is used to approach these subsets, many methods of which provided by the non-abelian Hodge theory for the moduli space $\mathsf{\mathcal{R}}\left( G \right)$. The term \textit{higher Teichm\"{u}ller space}\index{higher Teichm\"{u}ller space}\index{Teichm\"{u}ller space! higher} originates in the work of Vladimir Fock and Alexander Goncharov \cite{FG}, who developed a more algebro-geometric approach to Lusztig's notion of total positivity\index{total positivity}\index{positivity! total} in the context of general split real semisimple reductive Lie groups (see \cite{Lu}) and defined positive representations of the fundamental group $\pi_{1}(\Sigma)$ into these groups; among establishing significant geometric properties, Fock and Goncharov construct in \cite{FG} all positive representations and show that they are faithful, discrete and positive hyperbolic. Today, the term refers to connected components of the character variety in a broader sense: 

\begin{definition}
Let $\Sigma$ be a closed connected oriented topological surface of genus $g \ge 2$ and $G$ a semisimple real Lie group. A \textit{higher Teichm\"{u}ller space} is a connected component of the character variety  $\mathsf{\mathcal{R}}\left( G \right)$ that entirely consists of faithful representations with discrete image. 
\end{definition}

Several essential features of higher Teichm\"{u}ller spaces can be traced back to the ideas and work of Thurston. For instance, Thurston's shear coordinates have been extended in this setting by Fock and Goncharov \cite{FG}, and are sometimes called \textit{Fock--Goncharov coordinates}; noncommutative coordinates on the spaces of framed and decorated fundamental group representations for a surface with boundary into the group $\text{Sp}\left( 2n,\mathbb{R}\right)$ have been introduced by Alessandrini, Guichard, Rogozinnikov and Wienhard in \cite{AGRW}. Labourie and McShane \cite{LaMc} studied cross ratios and McShane--Mirzakhani identities in the case $G=\text{PSL}(n, \mathbb{R})$ and gave explicit expressions of these generalized identities in terms of a suitable choice of Fock--Goncharov coordinates; see also the work of Vlamis and Yarmola \cite{VY} for a generalization of Basmajian's identity for Hitchin representations into $\text{PSL}(n, \mathbb{R})$, as well as the article of Fanoni and Pozzetti \cite{FP} for Basmajian-type inequalities for maximal representations $\rho :{{\pi }_{1}}(\Sigma )\to \text{Sp}\left( 2n,\mathbb{R} \right)$. Hitchin and maximal representations, in particular, lie in higher Teichm\"{u}ller spaces and will be briefly reviewed below. Generalizations of the McShane identities for higher Teichm\"{u}ller spaces were obtained by Huang and Sun in \cite{HuSu}; these are expressed in terms of simple root lengths, triple ratios and edge functions. Le in \cite{Le} gave a definition of a higher lamination in the spirit of Thurston for the space of framed $G$-local systems over $\Sigma$ and showed that this coincides with the approach of Fock and Goncharov  \cite{FG} as the tropical points of a higher Teichm\"{u}ller space. Another example is the pressure metric for Anosov representations from \cite{BCLS}, \cite{BCS}, which can be viewed as a generalization of the Weil--Peterson metric on the Teichm\"{u}ller space as seen by Thurston. Moreover, generalizations of the Collar Lemma from hyperbolic geometry to Hitchin representations and to maximal representations have been also considered in \cite{LeeZh} and \cite{BuPo} respectively (see also \cite{BePo}).
 
Examples, however, of higher Teichm\"{u}ller spaces appeared long before the term was invented. For an adjoint split real semisimple Lie group $G$, there exists a unique embedding $\pi :\text{SL}\left( 2,\mathbb{R} \right)\to G$, which is the associated Lie group homomorphism to a principal 3-dimensional subalgebra of $\mathfrak{g}$, Kostant's principal subalgebra $\mathfrak{sl}\left( 2,\mathbb{R} \right)\subset \mathfrak{g}$ (see \cite{Kos}). For a fixed discrete embedding $\iota :{{\pi }_{1}}\left( \Sigma  \right)\to \text{SL}\left( 2,\mathbb{R} \right)$, Nigel Hitchin in \cite{Hit92} showed that the subspace containing $\pi \circ \iota :{{\pi }_{1}}\left( \Sigma  \right)\to G$ is a connected component and, in fact, topologically trivial of dimension $\left( 2g-2 \right)\dim G$. In the special case when the group is $G=\text{PSL}\left( 2,\mathbb{R} \right)$, this component is the Teichm\"{u}ller space. 

Following the work of Hitchin, it became apparent that the spaces identified, now called \textit{Hitchin components}\index{Hitchin component}, include representations with important geometric features. For instance, Labourie introduced in \cite{Labourie} the notion of an \textit{Anosov representation}\index{Anosov representation}\index{representation!Anosov} and used techniques from dynamical systems to prove (among other essential geometric properties) that representations lying inside the Hitchin component for $G=\text{PSL}\left( n,\mathbb{R} \right)$, $\text{PSp}\left( 2n,\mathbb{R} \right)$ or $\text{PO}\left( n,n+1 \right)$ are faithful with discrete image; we refer the reader to  \cite{BCLS}, \cite{Guichard}, \cite{GW2}, \cite{Labourie2}, \cite{LaMc}, \cite{LeeZh}, \cite{PoSa} for subsequent works on the geometric and dynamical properties of representations in the Hitchin components.

The second family of Lie groups $G$ where components of discrete and faithful representations have been detected, is the family of Hermitian Lie groups of non-compact type, that is, the symmetric space associated to $G$ is an irreducible Hermitian symmetric space of non-compact type. In this case, a characteristic number  called the \textit{Toledo invariant}\index{Toledo invariant} of a representation $\rho :{{\pi }_{1}}\left( \Sigma  \right)\to G$ can be defined as the integer
\[{{T}_{\rho }}:=\left\langle {{\rho }^{*}}\left( {{\kappa }_{G}} \right),\left[ \Sigma  \right] \right\rangle, \]
where ${{\rho }^{*}}( {{\kappa }_{G}})$ is the pullback of the K\"{a}hler class ${{\kappa }_{G}}\in H_{c}^{2}\left( G,\mathbb{R} \right)$ of $G$ and $\left[ \Sigma  \right]\in {{H}_{2}}\left( \Sigma ,\mathbb{R} \right)$ is the orientation class. The absolute value of the Toledo invariant has an upper bound of Milnor--Wood type
\begin{equation}\label{Milnor_Wood}
\left| {{T}_{\rho }} \right|\le \left( 2g-2 \right)\text{rk}\left( G \right)
\end{equation}
and a representation $\rho :{{\pi }_{1}}\left( \Sigma  \right)\to G$ is called \textit{maximal}\index{maximal representation}\index{representation!maximal} when this upper bound is achieved. Subspaces of maximal representations also have interesting geometric and dynamical properties and, in particular, consist entirely of discrete and faithful representations, as seen in \cite{BILW} and \cite{BIW}. 

It is also interesting to note at this point that in the case when the group  $G$ is the group $\text{PSL}\left( 2,\mathbb{R} \right)$, the Toledo invariant\index{Toledo invariant} is actually the Euler class, Inequality (\ref{Milnor_Wood}) is the Milnor--Wood inequality for the Euler class and the space of maximal representations in this case is identified with the Teichm\"{u}ller space, as in \cite{Goldman3}. 

We refer the reader to the survey articles of Wienhard \cite{Wienhard} and Pozzetti \cite{Pozzetti} for a broader presentation of the geometric properties of higher Teichm\"{u}ller spaces, as well as for an overview of the similarities and differences between these spaces and the classical  Teichm\"{u}ller space.

\subsection{$\Theta$-positive representations}

The special connected components introduced for the two families of Lie groups above, namely the adjoint split real semisimple Lie groups and the Hermitian Lie groups on non-compact type share (among many other fundamental properties) a common characterization that relates to the existence of a continuous equivariant map sending positive triples in $\mathbb{RP}^{1}$ to positive triples in certain flag varieties associated with the Lie group $G$. This property was identified by Labourie \cite{Labourie}, Guichard \cite{Guichard} and Fock--Goncharov \cite{FG} in the case of split semisimple real Lie groups, and by Burger--Iozzi--Wienhard \cite{BIW} for Hermitian Lie groups of non-compact type. 

This in turn provided the motivation to propose in \cite{GW} that the characterization above in terms of positivity can, in fact, distinguish \textit{all} higher Teichm\"{u}ller spaces\index{higher Teichm\"{u}ller space}\index{Teichm\"{u}ller space!higher}. We next include more details about this general conjectural picture; for complete references the reader is directed to the original article of Guichard and Wienhard \cite{GW}. 

The definition of a $\Theta$-positive structure\index{Theta positive structure}\index{positive structure!Theta} for a real semisimple Lie group $G$ is a generalization of Lusztig's total positivity condition in \cite{Lu} and is given in regards to properties of the Lie algebra of parabolic subgroups $P_{\Theta}<G$ defined by a subset of simple positive roots $\Theta \subset \Delta $. In these terms, let  ${{\mathfrak{u}}_{\Theta }}:=\sum\limits_{\alpha \in \Sigma _{\Theta }^{+}}{{{\mathfrak{g}}_{\alpha }}}$, for $\Sigma _{\Theta }^{+}={{\Sigma }^{+}}\backslash \text{Span}\left( \Delta -\Theta  \right)$, where ${{\Sigma }^{+}}$ denotes the set of positive roots, and then the standard parabolic subgroup  ${{P}_{\Theta }}$ associated to  $\Theta \subset \Delta $ is the normalizer in $G$ of ${{\mathfrak{u}}_{\Theta }}$. The group ${{P}_{\Theta }}$ is the semidirect product of its unipotent radical ${{U}_{\Theta }}:=\text{exp}\left( {{\mathfrak{u}}_{\Theta }} \right)$. Consider the Levi subgroup ${{L}_{\Theta }}:={{P}_{\Theta }}\cap P_{\Theta }^{\textit{opp}}$, where $P_{\Theta }^{\textit{opp}}$ is the normalizer in $G$ of $\mathfrak{u}_{\Theta }^{\textit{opp}}:=\sum\limits_{\alpha \in \Sigma _{\Theta }^{+}}{{{\mathfrak{g}}_{-\alpha }}}$. The Levi factor ${{L}_{\Theta }}$ acts on  ${{\mathfrak{u}}_{\Theta }}$ via the adjoint action. Denote by $L_{\Theta }^{0}$ the component of ${{L}_{\Theta }}$ containing the identity. 

For ${{\mathfrak{z}}_{\Theta }}$, the center of the Lie algebra ${{\mathfrak{l}}_{\Theta }}:=\text{Lie}\left( {{L}_{\Theta }} \right)$,  ${{\mathfrak{u}}_{\Theta }}$ can be decomposed into weight spaces
	\[{{\mathfrak{u}}_{\Theta }}=\sum\limits_{\beta \in \mathfrak{z}_{\Theta }^{*}}{{{\mathfrak{u}}_{\beta }}},\] 
where ${{\mathfrak{u}}_{\beta }}:=\left\{ N\in {{\mathfrak{u}}_{\Theta }}\left| \text{ad}\left( Z \right)N=\beta \left( Z \right)N,\text{ for every }Z\in {{\mathfrak{z}}_{\Theta }} \right. \right\}$. 

\begin{definition}[Guichard--Wienhard, Definition 4.2 in \cite{GW}]
Let $G$ be a semisimple Lie group with finite center and $\Theta \subset \Delta $ a subset of simple roots. The group $G$ admits a \emph{$\Theta $-positive structure} if for all  $\beta \in \Theta $, there exists an  $L_{\Theta }^{0}$-invariant sharp convex cone in  ${{\mathfrak{u}}_{\beta }}$.  
\end{definition}  

A central result in \cite{GW} provides that the semisimple Lie groups $G$ that can admit a $\Theta$-positive structure are classified as follows: 
\begin{theorem}[Guichard--Wienhard, Theorem 4.3 in \cite{GW}]\label{classification}
A semisimple Lie group $G$ admits a $\Theta$-positive structure if and only if the pair $\left( G, \Theta \right)$ belongs to one of the following four cases:
\begin{enumerate}
\item $G$ is a split real form and $\Theta = \Delta$.
\item $G$ is a Hermitian symmetric Lie group of tube type and $\Theta =\left\{ {{\alpha }_{r}} \right\}$.
\item $G$ is a Lie group locally isomorphic to a group $\text{SO}\left( p,q \right)$, for $p\ne q$, and $\Theta =\left\{ {{\alpha }_{1}},\ldots ,{{\alpha }_{p-1}} \right\}$.
\item $G$ is a real form of the groups ${{F}_{4}}$, ${{E}_{6}}$, ${{E}_{7}}$, ${{E}_{8}}$ with restricted root system of type ${{F}_{4}}$, and $\Theta =\left\{ {{\alpha }_{1}},{{\alpha }_{2}} \right\}$.
\end{enumerate}
\end{theorem}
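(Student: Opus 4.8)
The plan is to prove the two implications of the equivalence separately, the substance in both directions being an analysis of the isotropy representation of $L_\Theta^0$ on the weight spaces $\mathfrak{u}_\beta$, $\beta\in\Theta$. A first reduction applies to both directions: the centre $Z(L_\Theta)^0$ acts on each $\mathfrak{u}_\beta$ through a character which, because $\beta$ belongs to $\Theta$, is \emph{strictly positive} on the relevant coweight, hence acts by positive scalars up to a compact factor; since compact groups always preserve convex cones by averaging, the existence of an $L_\Theta^0$-invariant sharp convex cone in $\mathfrak{u}_\beta$ is equivalent to the existence of one invariant under the semisimple part $M_\Theta^0$ of $L_\Theta^0$. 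Thus everything reduces to understanding which irreducible representations of the semisimple Levi factors can preserve a sharp convex cone, and to matching this against the restricted root data of real semisimple Lie groups.

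For the ``if'' direction I would exhibit the cones explicitly, family by family. In case (1), $P_\Theta$ is a Borel subgroup, $M_\Theta^0$ is trivial, each $\mathfrak{u}_\alpha=\mathfrak{g}_\alpha$ is one-dimensional and $L_\Theta^0$ acts on it by positive scalars, so a half-line is the desired cone; this recovers precisely Lusztig's total positivity. In cases (2) and (4) the space $\mathfrak{u}_{\alpha_r}$, respectively $\mathfrak{u}_{\alpha_2}$ together with the one-dimensional $\mathfrak{u}_{\alpha_1}$, carries the structure of a Euclidean Jordan algebra --- self-adjoint matrices over $\mathbb{R}$, $\mathbb{C}$ or $\mathbb{H}$, a Lorentzian quadratic space, or the $27$-dimensional exceptional Jordan algebra $\text{Herm}(3,\mathbb{O})$ --- on which $M_\Theta^0$ acts through its structure group, and the symmetric cone of that Jordan algebra is invariant, open, convex and pointed; the tube-type hypothesis in (2) and the $F_4$-type restricted root system in (4) are exactly what make this Jordan structure available. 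In case (3) the spaces $\mathfrak{u}_{\alpha_i}$ for $i<p-1$ are one-dimensional with positive scalar action, while $\mathfrak{u}_{\alpha_{p-1}}$ is a quadratic space of Lorentzian signature on which $M_\Theta^0$ acts by a subgroup of its orthogonal group, so the forward light cone works. Verifying invariance, convexity and sharpness in each instance is then routine Jordan-algebraic bookkeeping.

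For the ``only if'' direction --- the harder half --- suppose $(G,\Theta)$ admits a $\Theta$-positive structure. The key input is a representation-theoretic dichotomy: if a connected semisimple Lie group $M$ acts irreducibly on a real vector space $V$ and preserves a sharp convex cone $C$, then the set of extreme rays of $\overline{C}$ is a nonempty closed $M$-invariant subset of $\mathbb{P}(V)$ and hence contains the unique closed orbit, the highest-weight orbit; pointedness of $C$ together with convexity then forces $V$ to be either trivial, hence one-dimensional, or one of the standard modules underlying a Euclidean Jordan algebra. Applying this with $M=M_\Theta^0$ and $V=\mathfrak{u}_\beta$ for each $\beta\in\Theta$, and using the description of $\mathfrak{u}_\beta$ as the irreducible $M_\Theta^0$-module whose highest weight is read off from the restricted Dynkin diagram of $(G,\Theta)$, the dichotomy becomes a combinatorial constraint on which simple roots may lie in $\Theta$ and on the shape of the sub-diagram $\Delta\setminus\Theta$ attached to each such $\beta$, supplemented by the compatibility requirement that the brackets $[\mathfrak{u}_\beta,\mathfrak{u}_{\beta'}]$ land in the correct weight spaces for distinct $\beta,\beta'\in\Theta$. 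Running this constraint through the list of restricted root systems of real semisimple Lie groups leaves exactly the four families in the statement.

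The main obstacle is the necessity direction, and two points inside it are the most delicate. First, the representation-theoretic dichotomy must be proved essentially from scratch: since the invariant cone is a priori neither homogeneous nor self-dual, one cannot simply invoke the Koecher--Vinberg classification of symmetric cones, and controlling \emph{all} irreducible cone-preserving representations requires a self-contained argument about highest-weight orbits and extreme rays. Second, the exceptional case (4) is the most intricate: the ambient groups have a restricted root system of type $F_4$ with nontrivial root multiplicities, the module $\mathfrak{u}_{\alpha_2}$ is governed by the exceptional Jordan algebra, and checking both that its symmetric cone is genuinely invariant and that no further exceptional configuration satisfies the combinatorial constraint is the finest bookkeeping in the proof. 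By contrast, the sufficiency direction amounts essentially to a catalogue of classical symmetric cones, the split case being Lusztig's theorem.
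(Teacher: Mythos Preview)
The paper does not contain a proof of this theorem. It is stated as Theorem~\ref{classification} with the attribution ``Guichard--Wienhard, Theorem 4.3 in \cite{GW}'' and is invoked purely as a cited classification result; no argument, sketch, or indication of proof is given in the text. There is therefore nothing in this paper to compare your proposal against.

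That said, your outline is a reasonable high-level reconstruction of the strategy one finds in the original Guichard--Wienhard work: reducing to the semisimple Levi factor, exhibiting explicit symmetric or Lorentzian cones in the four families for sufficiency, and for necessity invoking a classification of irreducible representations admitting invariant sharp convex cones together with a case analysis over restricted root systems. The weak point of your sketch is exactly where you flag it yourself --- the ``representation-theoretic dichotomy'' in the necessity direction is asserted rather than argued, and as stated it is not quite right: it is not true that every irreducible real representation with an invariant pointed cone is either one-dimensional or a Euclidean Jordan algebra module (for instance, the standard representation of $\mathrm{SL}(n,\mathbb{R})$ on $\mathbb{R}^n$ preserves the positive orthant's interior only up to permutation, but more to the point there are many cone-preserving irreducibles beyond Jordan modules). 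What the actual argument uses is finer: one needs the specific structure of $\mathfrak{u}_\beta$ as a weight space inside a parabolic, not just an abstract irreducible module, and the constraints coming from the bracket relations among different $\mathfrak{u}_\beta$'s. Your sketch gestures at this but does not make it precise, so as written the necessity direction has a genuine gap.
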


In order to define the notion of a positive triple in the generalized flag variety ${G}/{{{P}_{\Theta }}}\;$ for a semisimple Lie group $G$ with a $\Theta $-positive structure, one needs to introduce the notion of a $\Theta $-positive semigroup. First, associate to $\Theta$ a subgroup $W(\Theta )$ of the Weyl group $W$ as follows: The group $W$ is generated by the reflections ${{s}_{\alpha }}$, for $\alpha \in \Delta $; set ${{\sigma }_{\beta }}={{s}_{\beta }}$ for all $\beta \in \Theta -\left\{ {{\beta }_{\Theta }} \right\}$ and define ${{\sigma }_{{{\beta }_{\Theta }}}}$ to be the longest element of the Weyl group ${{W}_{\left\{ {{\beta }_{\Theta }} \right\}\cup \left( \Delta -\Theta  \right)}}$ of the sub-root system generated by $\left\{ {{\beta }_{\Theta }} \right\}\cup \left( \Delta -\Theta  \right)$. Define now the subgroup of $W$, 
	\[W(\Theta )={{\left\langle {{\sigma }_{\beta }} \right\rangle }_{\beta \in \Theta }}.\]
The group $W(\Theta )$ acts on the weight spaces ${{\mathfrak{u}}_{\Theta }}$, for $\beta \in \text{span}(\Theta )$. Denote by $w_{\Theta }^{0}$, the longest element in $W(\Theta )$ and consider a reduced expression $w_{\Theta }^{0}={{\sigma }_{{{i}_{1}}}}\cdots {{\sigma }_{{{i}_{l}}}}$. Then, for $c_{\beta }^{0}\subset {{\mathfrak{u}}_{\Theta }}$, the interior of the $L_{\Theta }^{0}$-invariant closed convex cone, there is a map  for every $\beta \in \Theta $ defined by
\begin{align*}	
{{F}_{{{\sigma }_{{{i}_{1}}}}\cdots {{\sigma }_{{{i}_{l}}}}}}:c_{{{\beta }_{{{i}_{1}}}}}^{0}\times \cdots \times c_{{{\beta }_{{{i}_{l}}}}}^{0} & \to {{U}_{\Theta }}\\
\left( {{v}_{{{i}_{1}}}},\ldots ,{{v}_{{{i}_{l}}}} \right) & \mapsto {{\chi }_{{{\beta }_{{{i}_{1}}}}}}\left( {{v}_{{{i}_{1}}}} \right)\cdot \ldots \cdot {{\chi }_{{{\beta }_{{{i}_{l}}}}}}\left( {{v}_{{{i}_{l}}}} \right),
\end{align*}
where, for any $\beta \in \Theta $, the map ${{\chi }_{\beta }}:{{\mathfrak{u}}_{\Theta }}\to {{U}_{\beta }}\subset {{U}_{\Theta }}$ with $v\mapsto \text{exp}\left( v \right)$ is considered. The $\Theta $-positive semigroup of ${{U}_{\Theta }}$ is now defined as follows:

\begin{theorem}[Guichard--Wienhard, Theorem 4.5 in \cite{GW}] The image $U_{\Theta }^{>0}$ of the map ${{F}_{{{\sigma }_{{{i}_{1}}}}\cdots {{\sigma }_{{{i}_{l}}}}}}$ defined above is independent of the reduced expression of $w_{\Theta }^{0}$. 
\end{theorem}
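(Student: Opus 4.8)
The plan is to reduce the statement to a finite collection of rank-two identities by using the Coxeter structure of $W(\Theta)$. Recall first that $(W(\Theta),\{\sigma_\beta\}_{\beta\in\Theta})$ is a finite Coxeter system, so that the orders $m_{\beta\beta'}$ of the products $\sigma_\beta\sigma_{\beta'}$ are finite and $W(\Theta)$ has a well-defined longest element $w_\Theta^0$. By Tits' solution of the word problem for Coxeter groups (Matsumoto's theorem), any two reduced expressions for $w_\Theta^0$ differ by a finite sequence of braid moves, each of which replaces a subword $\sigma_\beta\sigma_{\beta'}\sigma_\beta\cdots$ of length $m_{\beta\beta'}$ by $\sigma_{\beta'}\sigma_\beta\sigma_{\beta'}\cdots$ of the same length. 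Since $F_{\sigma_{i_1}\cdots\sigma_{i_l}}$ is defined by a product of the semigroup maps $\chi_\beta$, it therefore suffices to show that the image $U_\Theta^{>0}$ is unchanged under a single braid move; cutting the word before and after the affected subword, this in turn reduces to a statement about the rank-two parabolic generated by $\beta$ and $\beta'$.

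The heart of the argument is then the \emph{rank-two positivity identity}: for $\beta\ne\beta'$ in $\Theta$ with $m=m_{\beta\beta'}$, one must produce a bijection between $c_\beta^0\times c_{\beta'}^0\times\cdots$ ($m$ factors) and $c_{\beta'}^0\times c_\beta^0\times\cdots$ ($m$ factors), given by maps that carry the open sharp convex cones into the open sharp convex cones, such that
\[
\chi_\beta(v_1)\,\chi_{\beta'}(v_2)\,\chi_\beta(v_3)\cdots=\chi_{\beta'}(v_1')\,\chi_\beta(v_2')\,\chi_{\beta'}(v_3')\cdots
\]
holds in $U_\Theta$. For $m=2$ this amounts to a commutation relation between the subgroups $U_\beta$ and $U_{\beta'}$ inside $U_\Theta$. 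For $m=3,4,6$ it is the $\Theta$-positive analogue of Lusztig's classical rank-two relations in types $A_2$, $B_2$, $G_2$, and I would prove it by working inside the rank-two Levi: using the classification of Theorem~\ref{classification} one reduces to the model situations --- a split rank-two group; the groups $\text{SO}(p,p+1)$ (with the low-rank model $\text{SO}(2,3)$, locally isomorphic to $\text{Sp}(4,\mathbb{R})$) for the $\text{SO}(p,q)$ family; the Hermitian tube-type case; and the $F_4$-type exceptional case --- and then computes the product explicitly in a faithful linear representation, reading off the transition maps and checking that they are given by rational expressions with positive, cone-preserving coefficients.

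Finally, one assembles the pieces: each elementary braid move replaces one factorization of an element of $U_\Theta$ by another factorization of the same element, and the reparametrization is a bijection from a product of open cones onto a product of open cones; hence the image of $F_{\sigma_{i_1}\cdots\sigma_{i_l}}$ is preserved. Iterating along the chain of braid moves connecting two arbitrary reduced words for $w_\Theta^0$ yields the asserted independence, and $U_\Theta^{>0}$ is thus well-defined.

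I expect the main obstacle to be the rank-two computation in the non-split cases, above all for the family $\text{SO}(p,q)$ with $\Theta=\{\alpha_1,\dots,\alpha_{p-1}\}$, where the weight spaces $\mathfrak{u}_\beta$ are no longer one-dimensional, so the ``positive one-parameter subgroups'' are replaced by cones of matrices and the braid identity becomes a genuine matrix-valued functional equation. The delicate point is not the algebraic identity itself but the verification that the transition maps between the two parametrizations actually preserve the $L_\Theta^0$-invariant sharp convex cones $c_\beta^0$; this forces one to use the $L_\Theta^0$-equivariance of the cones together with the precise structure of the unipotent radical $U_\Theta$ and its weight-space decomposition.
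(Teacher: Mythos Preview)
The paper does not contain a proof of this statement: it is quoted as Theorem~4.5 of \cite{GW} and used only as input for the definition of $\Theta$-positive triples. There is therefore no ``paper's own proof'' to compare against.

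That said, your outline is the correct strategy and is precisely the one Guichard and Wienhard follow in \cite{GW}, generalizing Lusztig's argument from \cite{Lu} in the split case. The reduction via Matsumoto's theorem to a single braid move, and then to a rank-two Levi, is standard and sound. You are also right that the entire content of the theorem lives in the rank-two identities, and that the genuinely new cases beyond Lusztig are those where the cones $c_\beta^0$ are higher-dimensional (the $\text{SO}(p,q)$ and exceptional $F_4$-type families). What you have written is an accurate roadmap, not a proof: the matrix-valued braid relations and the verification that the transition maps preserve the sharp cones are exactly the computations carried out in \cite{GW}, and you have correctly flagged them as the substantive step rather than attempted them. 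As a sketch of the architecture of the argument this is fine; as a proof it defers the actual work to the same place the paper does.
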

One may now define positive triples in the generalized flag variety:

\begin{definition}
Fix ${{E}_{\Theta }}$ and  ${{F}_{\Theta }}$ to be the standard flags in ${G}/{{{P}_{\Theta }}}\;$ such that $\text{Sta}{{\text{b}}_{G}}\left( {{F}_{\Theta }} \right)={{P}_{\Theta }}$ and $\text{Sta}{{\text{b}}_{G}}\left( {{E}_{\Theta }} \right)=P_{\Theta }^\textit{opp}$. For any  ${{S}_{\Theta }}\in {G}/{{{P}_{\Theta }}}\;$ transverse to  ${{F}_{\Theta }}$, there exists ${{u}_{{{S}_{\Theta }}}}\subset {{U}_{\Theta }}$ such that  ${{S}_{\Theta }}={{u}_{{{S}_{\Theta }}}}{{E}_{\Theta }}$. The triple  $\left( {{E}_{\Theta }},{{S}_{\Theta }},{{F}_{\Theta }} \right)$ in the generalized flag variety  ${G}/{{{P}_{\Theta }}}\;$ will be called \emph{$\Theta $-positive}, if  ${{u}_{{{S}_{\Theta }}}}\in U_{\Theta }^{>0}$, for  $U_{\Theta }^{>0}$ the $\Theta $-positive semigroup of  ${{U}_{\Theta }}$. 
\end{definition}

The definition of a $\Theta$-positive fundamental group representation is now the following: 

\begin{definition}[Guichard--Wienhard, Definition 5.3 in \cite{GW}]
Let $\Sigma $ be a closed connected and oriented topological surface of genus $g\ge 2$ and let $G$ be a semisimple Lie group admitting a $\Theta $-positive structure. A representation of the fundamental group of  ${{\Sigma }}$ into $G$ will be called  \emph{$\Theta $-positive}\index{Theta positive representation}\index{representation!Theta positive}, if there exists a  $\rho $-equivariant positive map  $\xi :\partial {{\pi }_{1}}\left( {{\Sigma }} \right)=\mathbb{R}{{\mathbb{P}}^{1}}\to {G}/{{{P}_{\Theta }}}\;$ sending positive triples in  $\mathbb{R}{{\mathbb{P}}^{1}}$ to  $\Theta $-positive triples in  ${G}/{{{P}_{\Theta }}}\;$. 
\end{definition}

In their recent article \cite{GLW}, Guichard, Labourie and Wienhard show that $\Theta$-positive representations are $\Theta$-Anosov, thus discrete and faithful, and that, in fact, for the four families of semisimple Lie groups $G$ listed in Theorem \ref{classification} above, there are higher Teichm\"{u}ller spaces in the character variety:

\begin{theorem}[Guichard--Labourie--Wienhard, Theorem A in \cite{GLW}]\label{GLW_conjecture}
Let $G$ be a semisimple Lie group that admits a $\Theta$-positive structure. Then there exists a connected component of the representation variety $\mathcal{R}(G)$ that consists solely of discrete and faithful representations.
\end{theorem}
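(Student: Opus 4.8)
\noindent\emph{Proof proposal.}
The plan is to show that the subset $\mathcal{P}_\Theta\subset\mathcal{R}(G)$ of conjugacy classes of $\Theta$-positive representations is at once \emph{open}, \emph{closed} and \emph{nonempty}, and that every representation it contains is discrete and faithful. Granting this, any connected component of $\mathcal{R}(G)$ that meets $\mathcal{P}_\Theta$ is wholly contained in $\mathcal{P}_\Theta$ (by openness and closedness) and hence consists solely of discrete and faithful representations, which is exactly the assertion. So the four things to establish are: (i) $\Theta$-positive $\Rightarrow$ discrete and faithful; (ii) $\mathcal{P}_\Theta\ne\emptyset$; (iii) $\mathcal{P}_\Theta$ open; (iv) $\mathcal{P}_\Theta$ closed.

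For (i) I would first upgrade a $\Theta$-positive representation $\rho$ to a $\Theta$-\emph{Anosov} one. By definition $\rho$ carries a $\rho$-equivariant positive map $\xi\colon\mathbb{RP}^1\to G/P_\Theta$; since in a $\Theta$-positive triple $(E_\Theta,S_\Theta,F_\Theta)$ the extreme terms are transverse, $\xi$ together with the companion map into $G/P_\Theta^{\mathrm{opp}}$ gives a transverse pair of continuous equivariant boundary maps. The remaining Anosov requirement is uniform contraction along the geodesic flow of a hyperbolic structure on $\Sigma$; the essential mechanism is that the $\Theta$-positive semigroup $U_\Theta^{>0}$ of Guichard--Wienhard acts as a genuine contraction semigroup on $G/P_\Theta$, so that writing $\rho(\gamma)$ through elements of $U_\Theta^{>0}$ and iterating produces the domination of the relevant $\mathfrak{z}_\Theta$-weight gaps needed for the Anosov property --- this is patterned on Labourie's Frenet-curve argument for Hitchin representations and on the Burger--Iozzi--Wienhard treatment of maximal representations. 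Once $\rho$ is Anosov it is a quasi-isometric embedding of $\pi_1(\Sigma)$, hence has discrete image and finite kernel, and the kernel is trivial since $\pi_1(\Sigma)$ is torsion-free; this gives (i). For (ii), in each of the four cases of Theorem~\ref{classification} one exhibits an explicit model: compose a Fuchsian representation $\pi_1(\Sigma)\to\mathrm{PSL}(2,\mathbb{R})$ with the relevant homomorphism $\mathrm{PSL}(2,\mathbb{R})\to G$ (the principal one in the split case, and its analogues for the Hermitian, $\mathrm{SO}(p,q)$ and exceptional families) and check that the induced boundary curve is positive, the positivity of the Veronese/Frenet configuration being what makes this work; the existence of such components is in any case already known from the work on Hitchin, maximal and exceptional $\mathrm{SO}(p,q)$-components.

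For (iii), openness follows from stability of the Anosov condition together with continuity of Anosov boundary maps: if $\rho'$ is close to a $\Theta$-positive $\rho$, then $\xi'$ is $C^0$-close to $\xi$ on $\mathbb{RP}^1$; positivity of a triple of flags is an open condition, so $\xi'$ still sends a fixed finite collection of positive triples to $\Theta$-positive ones, and equivariance propagates positivity to all positive triples along the circle, whence $\rho'\in\mathcal{P}_\Theta$. The genuinely hard step is (iv). Given $\rho_n\to\rho$ with each $\rho_n$ $\Theta$-positive, one must manufacture a $\rho$-equivariant positive boundary map. First one needs compactness to extract a limit of the $\xi_n$: since $\Theta$-positive representations are uniformly Anosov over compact sets of the character variety, the $\xi_n$ are uniformly quasi-isometric and equicontinuous, so a subsequence converges to a continuous $\rho$-equivariant map $\xi$. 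The delicate point is that positivity is only \emph{a priori} preserved in the weak (closed) sense: $\xi$ sends positive triples into $\overline{U_\Theta^{>0}}E_\Theta$, and one must rule out that a limiting configuration degenerates --- loses transversality or collapses --- since that would destroy the Anosov property of $\rho$. I expect this to be the main obstacle. I would attack it by strengthening the triple condition to a \emph{full positivity} statement (positivity of all finite tuples of points along $\mathbb{RP}^1$), which is inherited in the limit, and then observing that a degeneration of a limiting configuration is incompatible with full positivity together with the north--south dynamics on $\mathbb{RP}^1$ of the elements $\rho(\gamma)$: a positive configuration cannot collapse without forcing a coincidence of the attracting and repelling flags of some $\rho(\gamma)$. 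Making this rigorous requires a careful study of the closure of $U_\Theta^{>0}$ and of how degenerations behave under the $W(\Theta)$-combinatorics, and that is where the bulk of the technical work lies. With (i)--(iv) in place, $\mathcal{P}_\Theta$ is a nonempty union of connected components of $\mathcal{R}(G)$ all of whose points are discrete and faithful, which proves the theorem.
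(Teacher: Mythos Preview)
The paper does not contain a proof of this theorem: it is stated as Theorem~A of Guichard--Labourie--Wienhard~\cite{GLW} and quoted without argument, the surrounding text merely recording that \cite{GLW} shows $\Theta$-positive representations are $\Theta$-Anosov and hence discrete and faithful. There is therefore no in-paper proof to compare your proposal against.

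That said, your outline is broadly the architecture of the argument in \cite{GLW}: establish $\Theta$-positive $\Rightarrow$ $\Theta$-Anosov (hence discrete and faithful), exhibit model positive representations, and prove that positivity is both open and closed in $\mathcal{R}(G)$. Two cautions. First, your mechanism for the Anosov step (``$U_\Theta^{>0}$ acts as a contraction semigroup, iterate'') is more of a slogan than an argument; the actual proof in \cite{GLW} requires a substantial combinatorial and dynamical analysis of positive configurations and is not a direct transcription of Labourie's Frenet argument. Second, closedness is, as you rightly flag, the crux; your sketch of ruling out degeneration via full positivity plus north--south dynamics is plausible in spirit but underspecified --- one needs precise control of the boundary of $U_\Theta^{>0}$ inside $U_\Theta$ and of how limiting configurations interact with the $W(\Theta)$-parametrisation, and this is genuinely delicate. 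Note also that the paper, in \S\ref{gluing_positivity}, separately invokes Beyrer--Pozzetti~\cite{BePo2} for closedness in the $\mathrm{SO}(p,q)$ case, which signals that this step was not regarded as routine.
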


\section{Non-abelian Hodge theory}

A major contribution to the various methods available in order to study higher Teichm\"{u}ller spaces involves fixing a complex structure $J$ on the topological surface $\Sigma$, thus transforming $\Sigma$ into a Riemann surface $X=\left( \Sigma, J \right)$, therefore opening the way to holomorphic techniques and the theory of \textit{Higgs bundles}, as initiated by Nigel Hitchin in his article \textit{The self duality equations on a Riemann surface} published in 1987 \cite{Hit87}. The non-abelian Hodge theory correspondence provides a real-analytic isomorphism between the character variety $\mathsf{\mathcal{R}}\left( G \right)$ and the moduli space of polystable $G$-Higgs bundles, which we briefly introduce next. 

\subsection{Moduli spaces of $G$-Higgs bundles}

Let $X$ be a compact Riemann surface and let $G$ be a real reductive group. The latter involves considering \emph{Cartan data}\index{Cartan data} $\left( G,H,\theta ,B \right)$, where $H\subset G$ is a maximal compact subgroup, $\theta :\mathfrak{g}\to \mathfrak{g}$ is a Cartan involution and $B$ is a non-degenerate bilinear form on $\mathfrak{g}$ which is $\text{Ad}\left( G \right)$-invariant and $\theta $-invariant. The Cartan involution $\theta$ gives a decomposition (called the \emph{Cartan decomposition})
  \[\mathfrak{g}=\mathfrak{h}\oplus \mathfrak{m}\]
into its $\pm 1$-eigenspaces, where $\mathfrak{h}$ is the Lie algebra of $H$.

Let ${{H}^{\mathbb{C}}}$ be the complexification of $H$ and let ${{\mathfrak{g}}^{\mathbb{C}}}={{\mathfrak{h}}^{\mathbb{C}}}\oplus {{\mathfrak{m}}^{\mathbb{C}}}$ be the complexification of the Cartan decomposition. The adjoint action of $G$ on $\mathfrak{g}$ restricts to give a representation (the isotropy representation) of $H$ on $\mathfrak{m}$. This is independent of the choice of Cartan decomposition, since any two Cartan decompositions of $G$ are related by a conjugation using also that $\left[ \mathfrak{h},\mathfrak{m} \right]\subseteq \mathfrak{m}$. The action of $H$ extends to a linear holomorphic action of $H^{\mathbb{C}}$ on $\mathfrak{m}^{\mathbb{C}}$, thus providing the complexified isotropy representation $\iota :{{H}^{\mathbb{C}}}\to \text{GL(}{{\mathfrak{m}}^{\mathbb{C}}}\text{)}$. This introduces the following definition:

\begin{definition}
Let $K\cong {{T}^{*}}X$ be the canonical line bundle over a compact Riemann surface $X$. A \emph{$G$-Higgs bundle}\index{Higgs bundle} is a pair $\left( E,\varphi  \right)$ where
\begin{itemize}
  \item $E$ is a principal holomorphic ${{H}^{\mathbb{C}}}$-bundle over $X$ and
  \item $\varphi $ is a holomorphic section of the vector bundle $E\left( {{\mathfrak{m}}^{\mathbb{C}}} \right)\otimes K=\left( E{{\times }_{\iota }}{{\mathfrak{m}}^{\mathbb{C}}} \right)\otimes K$.
\end{itemize}
The section $\varphi $ is called the \emph{Higgs field}. Two $G$-Higgs bundles $\left( E,\varphi  \right)$ and $\left( {E}',{\varphi }' \right)$ are said to be \emph{isomorphic} if there is a principal bundle isomorphism $E\cong {E}'$ which takes $\varphi $ to ${\varphi }'$ under the induced isomorphism $E\left( {{\mathfrak{m}}^{\mathbb{C}}} \right)\cong {E}'\left( {{\mathfrak{m}}^{\mathbb{C}}} \right)$.
\end{definition}

To define a moduli space of $G$-Higgs bundles we need to consider a notion of semistability, stability and polystability. These notions are defined in terms of an antidominant character for a parabolic subgroup ${{P}}\subseteq {{H}^{\mathbb{C}}}$ and a holomorphic reduction $\sigma $ of the structure group of the bundle $E$ from ${{H}^{\mathbb{C}}}$ to ${{P}}$ (see \cite{GGMHitchin-Kob} for the precise definitions).

When the group $G$ is connected, principal ${{H}^{\mathbb{C}}}$-bundles $E$ are topologically classified by a characteristic class  $c\left( E \right)\in {{H}^{2}}\left( X,{{\pi }_{1}}\left( {{H}^{\mathbb{C}}} \right) \right)\cong {{\pi }_{1}}\left( {{H}^{\mathbb{C}}} \right) \cong {{\pi }_{1}}\left( H \right) \cong {{\pi }_{1}}\left( G \right)$.
\begin{definition}
For a fixed class $d\in {{\pi }_{1}}\left( G \right)$, the \emph{moduli space of polystable $G$-Higgs bundles} of fixed topological class $d$ with respect to the group of complex gauge transformations is defined as the set of isomorphism classes of polystable $G$-Higgs bundles $\left( E,\varphi  \right)$ such that $c\left( E \right)=d$. We will denote this set by ${{\mathsf{\mathcal{M}}}_{d}}\left( G \right)$.
\end{definition}

Using the general GIT constructions of Schmitt for decorated principal bundles in the case of a real form of a complex reductive algebraic Lie group, it is shown that the moduli space ${{\mathsf{\mathcal{M}}}_{d}}\left( G \right)$ is an algebraic variety. The expected dimension of the moduli space of $G$-Higgs bundles is $\left( g-1 \right)\dim{{G}^{\mathbb{C}}}$, in the case when $G$ is a connected semisimple real Lie group; see \cite{GGMHitchin-Kob}, \cite{Schmitt1}, \cite{Schmitt2} for details.

\subsection{$G$-Hitchin equations} Let $\left( E,\varphi  \right)$ be a $G$-Higgs bundle over a compact Riemann surface $X$. By a slight abuse of notation we shall denote the underlying smooth objects of $E$ and $\varphi $ by the same symbols. The Higgs field can be thus viewed as a $\left( 1,0 \right)$-form $\varphi \in {{\Omega }^{1,0}}\left( E\left( {{\mathfrak{m}}^{\mathbb{C}}} \right) \right)$. Given a reduction $h$ of structure group to $H$ in the smooth ${{H}^{\mathbb{C}}}$-bundle $E$, we denote by ${{F}_{h}}$ the curvature of the unique connection compatible with $h$ and the holomorphic structure on $E$. Let ${{\tau }_{h}}:{{\Omega }^{1,0}}\left( E\left( {{\mathfrak{g}}^{\mathbb{C}}} \right) \right)\to {{\Omega }^{0,1}}\left( E\left( {{\mathfrak{g}}^{\mathbb{C}}} \right) \right)$ be defined by the compact conjugation of ${{\mathfrak{g}}^{\mathbb{C}}}$ which is given fiberwise by the reduction $h$, combined with complex conjugation on complex 1-forms. The next theorem was proved in \cite{GGMHitchin-Kob} for an arbitrary reductive real Lie group $G$.

\begin{theorem}[Hitchin--Kobayashi correspondence, Theorem 3.21 in \cite{GGMHitchin-Kob}]\label{Hitchin_Kobayashi} There exists a reduction $h$ of the structure group of $E$ from ${{H}^{\mathbb{C}}}$ to $H$ satisfying the Hitchin equation\index{Hitchin equation}
	\[{{F}_{h}}-\left[ \varphi ,{{\tau }_{h}}\left( \varphi  \right) \right]=0\]
if and only if $\left( E,\varphi  \right)$ is polystable.
\end{theorem}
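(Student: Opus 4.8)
This is a Hitchin--Kobayashi correspondence, and the two implications demand quite different techniques; the plan is to treat them separately.

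For \emph{solution $\Rightarrow$ polystable} I would run a Chern--Weil argument in the spirit of Uhlenbeck--Yau and Simpson. Given a reduction $h$ with $F_h-\left[\varphi,\tau_h(\varphi)\right]=0$, fix a parabolic subgroup $P\subseteq H^{\mathbb{C}}$, a holomorphic reduction $\sigma$ of $E$ to $P$, and an antidominant character $\chi$ of $P$, and aim to show that the associated relative degree is nonnegative, with a reduction to the Levi factor when it vanishes. First I would express this relative degree, via Chern--Weil theory, as the integral over $X$ of the pairing of $\chi$ with $F_h$ transported through $\sigma$, plus a manifestly nonnegative term built from the second fundamental form of $\sigma$ with respect to $h$. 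Then I would use the Hitchin equation to replace $F_h$ by $\left[\varphi,\tau_h(\varphi)\right]$; since $\chi$ is antidominant and $\sigma$ is holomorphic, the Higgs field preserves the induced filtration, so the pointwise pairing of $\left[\varphi,\tau_h(\varphi)\right]$ with $\chi$ is nonnegative and the whole integral is $\ge 0$. Tracking the equality case --- all contributions vanish, hence $\sigma$ is $\varphi$-invariant and $h$-parallel --- produces the polystability splitting. This direction is essentially bookkeeping once the curvature identities are in place.

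For \emph{polystable $\Rightarrow$ solution}, the analytic heart of the theorem, the plan is to invoke the general Hitchin--Kobayashi correspondence for pairs developed by Banfield, by Bradlow, Garc\'{\i}a-Prada and Mundet i Riera, and by Mundet i Riera, proceeding in three stages. (i) \emph{Moment-map reformulation}: fix a reference reduction $h_0$; every reduction is $h=h_0 e^{s}$ with $s$ a section of the bundle of $h_0$-Hermitian endomorphisms, the expression $F_h-\left[\varphi,\tau_h(\varphi)\right]$ is the gradient of a Donaldson-type functional $\mathcal{M}(h_0,\cdot)$ on the space of reductions --- equivalently the moment map for the unitary gauge group acting on holomorphic pairs --- so a solution is a zero of this moment map in the complexified gauge orbit of $\left(E,\varphi\right)$. (ii) \emph{Convexity and $C^0$ estimates}: along the geodesics $t\mapsto h_0 e^{ts}$ the functional $\mathcal{M}$ is convex with monotone derivative, and a Weitzenb\"{o}ck computation together with Moser iteration and the maximum principle turn polystability of $\left(E,\varphi\right)$ into a uniform $C^0$ bound on a minimizing sequence, or along the downward gradient flow. (iii) \emph{The dichotomy}: either the sequence stays bounded and, by elliptic regularity and the standard compactness argument, converges after a unitary gauge transformation to a smooth solution; or it diverges, and the Uhlenbeck--Yau limiting construction extracts a weakly holomorphic reduction to a parabolic subgroup together with an antidominant character of strictly negative relative degree, contradicting polystability. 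If $\left(E,\varphi\right)$ is stable this finishes the proof; in the strictly polystable case I would run the argument on each stable Higgs summand of a Jordan--H\"{o}lder filtration and assemble a block-diagonal solution.

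The step I expect to be the main obstacle is precisely stage (ii)--(iii) of the second implication: the $C^0$ a priori estimate, and the proof that failure of convergence must manufacture a destabilizing sub-object. For a general real reductive $G$ this is where one must verify the hypotheses of the abstract correspondence for the Cartan data $\left(G,H,\theta,B\right)$ --- the explicit form of the moment map, the weight inequality comparing the asymptotic slope of $\mathcal{M}$ with the algebraic degree, and properness of $\mathcal{M}$ modulo the stabilizer of $\left(E,\varphi\right)$. That verification, carried out in \cite{GGMHitchin-Kob}, is the technical core of the result; everything else then follows from the general machinery.
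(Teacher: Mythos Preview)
The paper does not prove this theorem; it is quoted verbatim as Theorem 3.21 of \cite{GGMHitchin-Kob} and used as a black box, so there is no ``paper's own proof'' to compare against. Your outline is a faithful sketch of the strategy actually carried out in that reference: the solution-implies-polystable direction via a Chern--Weil/second-fundamental-form computation, and the polystable-implies-solution direction via the Donaldson functional, convexity along geodesics, a priori $C^{0}$ estimates, and the Uhlenbeck--Yau dichotomy producing a destabilizing reduction upon divergence. The identification of stage (ii)--(iii) as the technical core, and the need to verify the abstract moment-map hypotheses for general Cartan data $\left(G,H,\theta,B\right)$, matches precisely what \cite{GGMHitchin-Kob} does.
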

From the point of view of moduli spaces it is convenient to fix a ${{C}^{\infty }}$ principal $H$-bundle ${{\textbf{E}}_{H}}$ with fixed topological class $d\in {{\pi }_{1}}\left( H \right)$ and study the moduli space of solutions to Hitchin's equations for a pair $\left( A,\varphi  \right)$ consisting of an $H$-connection $A$ and $\varphi \in {{\Omega }^{1,0}}\left( X,\textbf{E}_{H}\left( {{\mathfrak{m}}^{\mathbb{C}}} \right) \right)$ with
\begin{align*}
{{F}_{A}}-\left[ \varphi ,\tau \left( \varphi  \right) \right] &= 0 \tag{*} \\
{{\bar{\partial }}_{A}}\varphi &= 0
\end{align*}
where ${{d}_{A}}$ is the covariant derivative associated with $A$ and ${{\bar{\partial }}_{A}}$ is the $\left( 0,1 \right)$-part of ${{d}_{A}}$, defining the holomorphic structure on ${{\textbf{E}}_{H}}$. Also, $\tau $ is defined by the fixed reduction of structure group ${\textbf{E}_{H}}\hookrightarrow {\textbf{E}_{H}}\left( {{H}^{\mathbb{C}}} \right)$. The gauge group ${{\mathsf{\mathcal{G}}}_{H}}$ of ${\textbf{E}_{H}}$ acts on the space of solutions by conjugation and the moduli space of solutions is defined by
	\[\mathsf{\mathcal{M}}_{d}^{\text{gauge}}\left( G \right):={\left\{ \left( A,\varphi  \right)\text{ satisfying  equations (*)} \right\}}/{{{\mathsf{\mathcal{G}}}_{H}}}\;.\]
Now, Theorem \ref{Hitchin_Kobayashi} implies that there is a homeomorphism
\[{{\mathsf{\mathcal{M}}}_{d}}\left( G \right)\cong \mathsf{\mathcal{M}}_{d}^{\text{gauge}}\left( G \right).\]

Using the one-to-one correspondence between $H$-connections on ${\textbf{E}_{H}}$ and $\bar{\partial }$-operators on ${\textbf{E}_{{{H}^{\mathbb{C}}}}}$, the homeomorphism in the above theorem can be interpreted as saying that in the  $\mathsf{\mathcal{G}}_{H}^{\mathbb{C}}$-orbit of a polystable $G$-Higgs bundle $\left( {{{\bar{\partial }}}_{{{E}_{0}}}},{{\varphi }_{0}} \right)$ we can find another Higgs bundle $\left( {{{\bar{\partial }}}_{E}},\varphi  \right)$ whose corresponding pair $\left( {{d}_{A}},\varphi  \right)$ satisfies the equation ${{F}_{A}}-\left[ \varphi ,\tau \left( \varphi  \right) \right]=0$, and this is unique up to $H$-gauge transformations.

\subsection{The non-abelian Hodge correspondence}
We can assign a topological invariant to a representation $\rho \in \mathsf{\mathcal{R}}\left( G \right)$ by considering its corresponding flat $G$-bundle on $\Sigma$ defined as ${{E}_{\rho }}=\tilde{\Sigma}{{\times }_{\rho }}G$. Here $\tilde{\Sigma}\to \Sigma$ is the universal cover and ${{\pi }_{1}}\left( \Sigma  \right)$ acts on $G$ via $\rho$. A topological invariant is then given by the characteristic class $c\left( \rho  \right):=c\left( {{E}_{\rho }} \right)\in {{\pi }_{1}}\left( G \right)\simeq {{\pi }_{1}}\left( H \right)$, for $H\subseteq G$ a maximal compact subgroup of $G$. For a fixed $d\in {{\pi }_{1}}\left( G \right)$ the moduli space of reductive representations with fixed topological invariant $d$ is now defined as the subvariety
	\[{{\mathsf{\mathcal{R}}}_{d}}\left( G \right):=\left\{ \left[ \rho  \right]\in \mathsf{\mathcal{R}}\left( G \right)\left| c\left( \rho  \right)=d \right. \right\}.\]
A reductive fundamental group representation corresponds to a solution to the Hitchin equations. This is seen using that any solution $h$ to Hitchin's equations defines a flat reductive $G$-connection
\begin{equation}\label{reductive}
  D={{D}_{h}}+\varphi -\tau \left( \varphi  \right),
\end{equation} where ${{D}_{h}}$ is the unique $H$-connection on $E$ compatible with its holomorphic structure. Conversely, given a flat reductive connection $D$ on a $G$-bundle $E_{G}$, there exists a harmonic metric, in other words, a reduction of structure group to $H\subset G$ corresponding to a harmonic section of ${{{E}_{G}}}/{H}\;\to X$. This reduction produces a solution to Hitchin's equations such that Equation (\ref{reductive}) holds. 

In summary, equipping the surface $\Sigma$ with a complex structure $J$, a reductive representation of ${{\pi }_{1}}\left( \Sigma  \right)$  into $G$ corresponds to a polystable $G$-Higgs bundle over the Riemann surface $X=\left(\Sigma, J\right)$; this is the content of \textit{non-abelian Hodge correspondence}\index{non-abelian Hodge correspondence}; its proof is based on combined work by Hitchin \cite{Hit87}, Simpson \cite{Simpson-variations}, \cite{Simpson-Higgs}, Donaldson \cite{Donaldson} and Corlette \cite{Corlette}:
\begin{theorem}[Non-abelian Hodge correspondence]\label{naHc}
Let $G$ be a connected semisimple real Lie group with maximal compact subgroup $H\subseteq G$ and let $d\in {{\pi }_{1}}\left( G \right)\simeq {{\pi }_{1}}\left( H \right)$. Then there exists a homeomorphism
	\[{{\mathsf{\mathcal{R}}}_{d}}\left( G \right)\cong {{\mathsf{\mathcal{M}}}_{d}}\left( G \right).\]
\end{theorem}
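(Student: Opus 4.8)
The plan is to obtain the homeomorphism as the composition of two correspondences, each of which factors through the space of solutions of Hitchin's equations (*). One of them is the Hitchin--Kobayashi correspondence of Theorem \ref{Hitchin_Kobayashi}, matching polystable $G$-Higgs bundles with gauge-equivalence classes of solutions; the other is Corlette's theorem, recalled in the discussion preceding the statement, matching reductive flat $G$-connections with the same solutions through a harmonic reduction of structure group to $H$. The bridge between the two pictures is exactly the assignment $(D_h, \varphi) \mapsto D = D_h + \varphi - \tau(\varphi)$ of Equation (\ref{reductive}). It therefore remains to make each of these ingredients precise, to check that the resulting bijection descends to the two moduli spaces and preserves the topological class $d$, and to verify that it is continuous in both directions.

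First I would treat the passage from $\mathsf{\mathcal{M}}_d(G)$ to $\mathsf{\mathcal{R}}_d(G)$. Given a polystable $G$-Higgs bundle $(E, \varphi)$ with $c(E) = d$, Theorem \ref{Hitchin_Kobayashi} supplies a reduction $h$ to $H$ with $F_h - [\varphi, \tau_h(\varphi)] = 0$; letting $D_h$ be the associated connection, set $D := D_h + \varphi - \tau_h(\varphi)$. Expanding $F_D$ and using that a Riemann surface carries no nonzero forms of type $(2,0)$ or $(0,2)$, together with $\bar{\partial}_A \varphi = 0$ (equivalently $\partial_A \tau_h(\varphi) = 0$), one checks that the vanishing $F_D = 0$ reduces precisely to the Hitchin equation, so $D$ is flat. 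Its holonomy defines $\rho \colon \pi_1(\Sigma) \to G$ up to conjugation, and the harmonic reduction $h$ forces $\rho$ to be reductive. Since the underlying $H^{\mathbb{C}}$-bundle of $E$ is the complex-structure reduction of the flat $G$-bundle of $\rho$, one has $c(\rho) = c(E) = d$, so this construction defines a map $\mathsf{\mathcal{M}}_d(G) \to \mathsf{\mathcal{R}}_d(G)$.

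For the inverse passage, starting from a reductive $\rho$ with $c(\rho) = d$ I would form the flat $G$-bundle $E_\rho = \tilde{\Sigma} \times_\rho G$ with its flat connection $D$. By Corlette's theorem $D$ admits a harmonic metric, i.e. a reduction to $H$; writing $D = D_h + \Psi$ with $D_h$ an $H$-connection and $\Psi$ a $1$-form with values in the bundle $E_H(\mathfrak{m})$ associated to the isotropy representation, the $(0,1)$-part of $D_h$ gives the associated $H^{\mathbb{C}}$-bundle a holomorphic structure, and one sets $\varphi := \Psi^{1,0}$. Harmonicity of the reduction is then equivalent to $\bar{\partial}_A \varphi = 0$, so $(E, \varphi)$ is a $G$-Higgs bundle; flatness of $D$, decomposed by type, yields $F_h - [\varphi, \tau_h(\varphi)] = 0$, whence polystability by Theorem \ref{Hitchin_Kobayashi}. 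A short verification shows that the two constructions are mutually inverse on isomorphism classes --- a complex gauge transformation between Higgs bundles is absorbed into the choice of harmonic reduction, while conjugate representations yield isomorphic Higgs bundles --- so one obtains a bijection $\mathsf{\mathcal{R}}_d(G) \leftrightarrow \mathsf{\mathcal{M}}_d(G)$ respecting $d$.

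It then remains to upgrade this bijection to a homeomorphism. Both moduli spaces are identified with the gauge-theoretic moduli space $\mathsf{\mathcal{M}}_d^{\text{gauge}}(G)$ of solutions of (*), one identification being the content of Theorem \ref{Hitchin_Kobayashi}, and I would argue that these identifications are continuous for the natural $C^\infty$ topologies, since solutions of Hitchin's equations and the harmonic reductions produced by Corlette's theorem depend continuously on the data, with Uhlenbeck-type regularity and compactness controlling limits. I expect the genuinely hard input to be Corlette's existence theorem itself --- producing the equivariant harmonic map into the non-compact symmetric space $G/H$, where reductivity of $\rho$ is precisely the hypothesis that prevents an energy-minimizing sequence from escaping to infinity --- together with the properness estimates needed to make the correspondence bicontinuous rather than merely bijective; the remaining steps (the curvature computation, the type decomposition, and the compatibility of the two constructions) are formal.
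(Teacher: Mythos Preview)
The paper does not actually prove this theorem: it states the non-abelian Hodge correspondence as a result due to Hitchin, Simpson, Donaldson and Corlette, and the surrounding text only sketches the two directions --- the Hitchin--Kobayashi correspondence of Theorem~\ref{Hitchin_Kobayashi} on one side, and Corlette's harmonic metric theorem on the other, linked by the flat connection $D = D_h + \varphi - \tau(\varphi)$ of Equation~(\ref{reductive}). Your proposal expands exactly this sketch into a coherent outline, with the correct identification of the hard inputs (existence of the harmonic reduction under the reductivity hypothesis, and the continuity/properness needed for a homeomorphism rather than a bijection); there is nothing to compare against beyond noting that your write-up is more detailed than the paper's own discussion.
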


The introduction of holomorphic techniques via the non-abelian Hodge correspondence allows the description of a theory of higher Teichm\"{u}ller spaces from the Higgs bundle point of view. In \cite{BCGGP}, Bradlow, Collier, Garc\'{i}a-Prada, Gothen and Oliviera obtain a parameterization of special components of the moduli space of Higgs bundles on a compact Riemann surface using the decomposition data for a complex simple Lie algebra $\mathfrak{g}$. The possible decompositions of $\mathfrak{g}$ are defined by a newly introduced class of $\mathfrak{sl}\left( 2, \mathbb{R} \right)$-triples, and the classification of these triples is shown to be in bijection with the classification of the $\Theta$-positive structures of Guichard and Wienhard (Theorem \ref{classification}). We refer to \cite{BCGGP} for the precise statements; see also the survey article of Garc\'{i}a-Prada \cite{GPsurvey} for a broader description of the results for higher Teichm\"{u}ller spaces that can be obtained using the theory of Higgs bundles.

\section{Surgeries in representation varieties-General theory}\label{general_gluing}

We next describe a gluing construction for points of the moduli spaces appearing in the non-abelian Hodge correspondence. In particular, this technique can be used to obtain specific model objects of the moduli spaces which are hard to be constructed otherwise and can be used to improve our understanding of the geometric properties of the subsets of the character variety they live in.

\subsection{Topological gluing construction}

For a closed oriented surface $\Sigma$ of genus $g$, let $\Sigma ={{\Sigma }_{l}}{{\cup }_{\gamma }}{{\Sigma }_{r}}$ be a decomposition of $\Sigma $ along one simple closed oriented separating geodesic $\gamma$ into two subsurfaces, say ${{\Sigma }_{l}}$ and ${{\Sigma }_{r}}$. Let now ${{\rho }_{l}}:{{\pi }_{1}}\left( {{\Sigma }} \right)\to G$ and ${{\rho }_{r}}:{{\pi }_{1}}\left( {{\Sigma }} \right)\to G$ be two representations into a semisimple Lie group $G$. 

One could amalgamate the restriction of ${{\rho }_{l}}$ to ${{\Sigma }_{l}}$ with the restriction of ${{\rho }_{r }}$ to ${{\Sigma }_{r}}$, however the holonomies of those along $\gamma $ do not have to agree a priori. If the holonomies do agree (possibly after applying a deformation of at least one of the two representations for the holonomies to match up), then one can introduce new representations by gluing with a use of the van Kampen theorem at the level of topological surfaces, as follows.

\begin{definition}  A \emph{hybrid representation}\index{hybrid representation}\index{representation!hybrid} is defined as the amalgamated representation
\[\rho :={{\rho }_{l}}\left| _{{{\pi }_{1}}\left( {{\Sigma }_{l}} \right)} \right.*{{\rho }_{r}}\left| _{{{\pi }_{1}}\left( {{\Sigma }_{r}} \right)} \right.:{{\pi }_{1}}\left( \Sigma  \right)\simeq {{\pi }_{1}}\left( {{\Sigma }_{l}} \right){{*}_{\left\langle \gamma  \right\rangle }}{{\pi }_{1}}\left( {{\Sigma }_{r}} \right)\to G.\]
\end{definition}

\begin{remark}
The assumption that the holonomies agree over the boundary is crucial. In \S 3.3.1 of \cite{GW3}, Guichard and Wienhard provide an explicit example of hybrid representations in the case when the group is the symplectic group $\text{Sp}\left( 4,\mathbb{R} \right)$. Special attention is paid there in order to establish this assumption via an appropriate deformation argument.
\end{remark}

The above construction/definition can be generalized to the case when the subsurfaces ${{\Sigma }_{l}}$ and ${{\Sigma }_{r}}$ are not necessarily connected. For $\Sigma$ as earlier, let ${{\Sigma }_{1}}\subset \Sigma $ denote a subsurface with Euler characteristic $\chi ({{\Sigma }_{1}})\le -1$. The (nonempty) boundary of ${{\Sigma }_{1}}$ is a union of disjoint circles 
\[\partial {{\Sigma }_{1}}=\coprod\limits_{d\in {{\pi }_{0}}\left( \partial {{\Sigma }_{1}} \right)}{{{\gamma }_{d}}}.\]
The circles $\gamma_{d}$ are oriented so that for each $d$, the surface ${{\Sigma }_{1}}$ lies on the left of ${{\gamma }_{d}}$. Now, write
	\[\Sigma \backslash \partial {{\Sigma }_{1}}=\bigcup\limits_{c\in {{\pi }_{0}}\left( \Sigma \backslash \partial {{\Sigma }_{1}} \right)}{{{\Sigma }_{c}}}.\]
Then, for any $d \in {{\pi }_{0}}( \partial {{\Sigma }_{1}})$, the curve ${{\gamma }_{d}}$ bounds exactly two connected components of $\Sigma \backslash \partial {{\Sigma }_{1}}$, namely, one is included in ${{\Sigma }_{1}}$ and denoted by ${{\Sigma }_{l( d )}}$ with $l( d )\in {{\pi }_{0}}( {{\Sigma }_{1}})$, while the other is included in the complement of $\Sigma_{1}$ and is denoted by ${{\Sigma }_{r( d )}}$ with $r( d)\in {{\pi }_{0}}( \Sigma \backslash {{\Sigma }_{1}} )$. In this way, we have  $l( d ),r( d)\in {{\pi }_{0}}( \Sigma \backslash \partial {{\Sigma }_{1}})$, but it can be that $l( d )=l( {{d}'} )$ or that $r( d )=r( {{d}'} )$,  for $d\ne {d}'$. 

Assume now that the graph with vertex set ${{\pi }_{0}}( \Sigma \backslash {{\Sigma }_{1}} )$ and edges given by the pairs ${{\left\{ l( d),r( d) \right\}}_{d\in {{\pi }_{0}}( \partial {{\Sigma }_{1}} )}}$ is a tree. This allows us to apply a generalized van Kampen theorem argument and write the fundamental group ${{\pi }_{1}}( \Sigma  )$ as the amalgamated product of the groups ${{\pi }_{1}}( {{\Sigma }_{c}} )$, for all $c\in {{\pi }_{0}}( \Sigma \backslash \partial {{\Sigma }_{1}} )$ over the groups ${{\pi }_{1}}( {{\gamma }_{d}} )$, for all $d\in {{\pi }_{0}}( \partial {{\Sigma }_{1}} )$.

Pick a family of representations $\left\{{{\rho }_{c}}:{{\pi }_{1}}( \Sigma_{c} )\to G \right\}_{c \in {{\pi }_{0}}( \Sigma \backslash \partial {{\Sigma }_{1}})}$ subordinate to the following condition: there exist elements ${{g}_{c}}\in G$ for each $c\in {{\pi }_{0}}( \Sigma \backslash \partial {{\Sigma }_{1}})$, such that for any $d\in {{\pi }_{0}}( \partial {{\Sigma }_{1}})$ it holds that 
	\[{{g}_{l( d)}}{{\rho }_{l( d )}}( {{\gamma }_{d}} )g_{l( d )}^{-1}={{g}_{r( d )}}{{\rho }_{r( d )}}( {{\gamma }_{d}} )g_{r( d )}^{-1}.\]
Then one may construct a hybrid representation $\rho :{{\pi }_{1}}( \Sigma )\to G$ by amalgamating the representations ${{g}_{c}}{{\rho }_{c}}g_{c}^{-1}$, for each $c\in {{\pi }_{0}}( \Sigma \backslash \partial {{\Sigma }_{1}})$. An explicit example of amalgamation of a family of representations that satisfy the condition above is provided in \S 3.3.2 of \cite{GW3} in the case when $G= \text{Sp} (4, \mathbb{R})$.

\subsection{Gluing in exceptional components of the moduli space}

Motivated by the amalgamation construction for representations and in the realm of the non-abelian Hodge correspondence, one may seek for an analogous gluing construction from a holomorphic point of view. The benefit from establishing this method in the Higgs bundle moduli space is that it is easier to compute the Higgs bundle invariants for any models constructed in order to identify in which connected component these new objects lie. Indeed, for the cases when the Lie group is the group $\text{Sp}\left( 4,\mathbb{R} \right)$ or $\text{SO}\left( p, p+1 \right)$ the moduli space has a number of exceptional components\index{exceptional components} in terms of their topological and geometric properties; these exceptional components do, in fact, fall in the class of higher Teichm\"{u}ller spaces. It is for such components that a gluing construction for Higgs bundles can provide good models that are not easily obtained otherwise, thus allowing us to study more closely the components themselves. Examples of models in the case of the group $\text{Sp}\left( 4,\mathbb{R} \right)$ were obtained in \cite{Kydon-article} (see also \cite{Kydonakis}), while for $G = \text{SO}\left( p, p+1 \right)$ we will demonstrate some examples in \S \ref{Examples-orthogonal} later on. 

\subsubsection{Parabolic $\text{GL}\left( n,\mathbb{C} \right)$-Higgs bundles}\label{parabolic_data}
Remember that the amalgamation method involved fundamental group representations defined over a surface with boundary. The appropriate analog to a surface group representation into a reductive Lie group $G$ for a surface with boundary is a \textit{parabolic $G$-Higgs bundle}\index{parabolic Higgs bundle}\index{Higgs bundle!parabolic} over a Riemann surface with a divisor. This involves an extra layer of structure encoded by a weighted filtration on each fiber of the bundle over a collection of finitely many distinct points of the surface. We include next basic definitions for a parabolic $\text{GL}\left( n, \mathbb{C} \right)$-Higgs bundle; concrete examples of such pairs will be studied later on in \S \ref{Examples-orthogonal}.

Parabolic vector bundles over Riemann surfaces with marked points were introduced by Conjeeveram S. Seshadri in \cite{Seshadri} and similar to the Narasimhan--Seshadri correspondence, there is an analogous correspondence between stable parabolic bundles and unitary representations of the fundamental group of the punctured surface with fixed holonomy class around each puncture \cite{MeSe}. Later on, Carlos Simpson in \cite{Simpson-noncompact} proved a non-abelian Hodge correspondence over a \emph{non-compact} curve.

\begin{definition} Let $X$ be a closed, connected, smooth Riemann surface of genus $g\ge 2$ with $s$-many marked points ${{x}_{1}},\ldots ,{{x}_{s}}$ and let a divisor $D=\left\{ {{x}_{1}},\ldots ,{{x}_{s}} \right\}$.  A \emph{parabolic vector bundle} $E$ over $X$ is a holomorphic vector bundle $E\to X$ of rank $n$ with \emph{parabolic structure}\index{parabolic structure}\index{structure!parabolic} at each $x\in D$ (\emph{weighted flag} on each fiber ${{E}_{x}}$ ): \[\begin{matrix}
   {{E}_{x}}={{E}_{x,1}}\supset {{E}_{x,2}}\supset \ldots \supset {{E}_{x,r\left( x \right)+1}}=\left\{ 0 \right\}  \\
   0\le {{\alpha }_{1}}\left( x \right)<\ldots <{{\alpha }_{r\left( x \right)}}\left( x \right)<1.  \\
\end{matrix}\]
\end{definition}
The real numbers ${{\alpha }_{i}}\left( x \right)\in \left[ 0,1 \right)$ for $1\le i\le r\left( x \right)$ are called the \emph{weights} of the subspaces ${{E}_{x}}$ and we usually write $\left( E, \alpha \right)$ to denote a parabolic vector bundle equipped with a parabolic structure determined by a system of weights $\alpha \left( x \right)=\left( {{\alpha }_{1}}\left( x \right),\ldots ,{{\alpha }_{r(x)}}\left( x \right) \right)$ at each $x\in D$; whenever the system of weights is not discussed in the context, we will be omitting the notation  $\alpha$ to ease exposition. Moreover, let ${{k}_{i}}\left( x \right)=\dim\left( {{{E}_{x,i}}}/{{{E}_{x,i+1}}}\; \right)$ denote the \emph{multiplicity} of the weight ${{\alpha }_{i}}\left( x \right)$ and notice that $\sum\limits_{i}{{{k}_{i}}}\left( x \right)=n$. A weighted flag shall be called \emph{full}, if ${{k}_{i}}\left( x \right)=1$ for every $1\le i\le r\left( x \right)$  and every $x\in D$.\\
The \textit{parabolic degree} and \textit{parabolic slope} of a vector bundle equipped with a parabolic structure are the real numbers \[\text{par}\deg \left( E \right)=\deg E+\sum\limits_{x\in D}{\sum\limits_{i=1}^{r\left( x \right)}{{{k}_{i}}\left( x \right){{\alpha }_{i}}\left( x \right)}},\]
\[\text{par}\mu \left( \text{E} \right)=\frac{\text{pardeg}\left( E \right)}{\text{rk}\left( E \right)}.\]

\begin{definition}
Let $K$ be the canonical bundle over $X$ and $E$ a parabolic vector bundle. The bundle morphism $\Phi :E\to E\otimes K\left( D \right)$ will be called a \textit{parabolic Higgs field} if it preserves the parabolic structure at each point $x\in D$:
	\[\Phi \left| _{x}\left( {{E}_{x,i}} \right) \right.\subset {{E}_{x,i}}\otimes K\left( D \right)\left| _{x} \right. .\]
In particular, we call $\Phi$ \emph{strongly parabolic} if
\[\Phi \left| _{x}\left( {{E}_{x,i}} \right) \right.\subset {{E}_{x,i+1}}\otimes K\left( D \right)\left| _{x} \right. ,\]
that is, $\Phi \in H^0 (X, \text{End}(E) \otimes K(D))$ is an element with simple poles along the divisor $D$, whose residue at $x\in D$ is nilpotent with respect to the filtration.
\end{definition}
After these considerations we define parabolic Higgs bundles as follows.

\begin{definition}
Let $K$ be the canonical bundle over $X$ and $E$ a parabolic vector bundle over $X$. 
A \textit{parabolic Higgs bundle} is a pair $\left( E,\Phi  \right)$, where $E$ is a parabolic vector bundle and $\Phi :E\to E\otimes K\left( D \right)$ is a strongly parabolic Higgs field.
\end{definition}

Analogously to the non-parabolic case, we may define a notion of stability as follows:

\begin{definition}
A parabolic Higgs bundle will be called \textit{stable}\index{parabolic Higgs bundle!stable} (resp. \textit{semistable}) if for every $\Phi $-invariant parabolic subbundle $F\le E$ we have $\text{par}\mu \left( F \right)<\text{par}\mu \left( E \right)$ (resp. $\le $).
Furthermore, it will be called \emph{polystable} if it is the direct sum of stable parabolic Higgs bundles of the same parabolic slope.
\end{definition}

\subsection{Complex connected sum of Riemann surfaces}
In order to describe how two parabolic Higgs bundles can be glued to a (non-parabolic) Higgs bundle, the first step is to glue their underlying surfaces with boundary as follows.

Take annuli ${{\mathbb{A}}_{1}}=\left\{ z\in \mathbb{C}\left| {{r}_{1}}<\left| z \right|<{{R}_{1}} \right. \right\}$  and ${{\mathbb{A}}_{2}}=\left\{ z\in \mathbb{C}\left| {{r}_{2}}<\left| z \right|<{{R}_{2}} \right. \right\}$  on two copies of the complex plane, and consider the M{\"o}bius transformation ${{f}_{\lambda }}:{{\mathbb{A}}_{1}}\to {{\mathbb{A}}_{2}}$ with ${{f}_{\lambda }}\left( z \right)=\frac{\lambda }{z}$, where $\lambda \in \mathbb{C}$ with $\left| \lambda  \right|={{r}_{2}}{{R}_{1}}={{r}_{1}}{{R}_{2}}$. This is a conformal biholomorphism (equivalently bijective, angle-preserving and orientation-preserving) between the two annuli and such that the continuous extension of the function $z\mapsto \left| {{f}_{\lambda }}\left( z \right) \right|$ to the closure of ${{\mathbb{A}}_{1}}$ reverses the order of the boundary components. 

Consider two compact Riemann surfaces $X_{1}, X_{2}$ of respective genera $g_{1}, g_{2}$. Choose points $p\in {{X}_{1}}$, $q\in {{X}_{2}}$ and local charts around these points ${{\psi }_{i}}:{{U}_{i}}\to \Delta \left( 0,{{\varepsilon }_{i}} \right)$ on ${{X}_{i}}$, for $i=1,2$. Now fix positive real numbers ${{r}_{i}}<{{R}_{i}}<{{\varepsilon }_{i}}$ such that the following two conditions are satisfied:
\begin{itemize}
  \item $\psi _{i}^{-1}\left( \overline{\Delta \left( 0,{{R}_{i}} \right)} \right)\cap {{U}_{j}}\ne \varnothing $, for every ${{U}_{j}}\ne {{U}_{i}}$ from the complex atlas of ${{X}_{i}}$. In other words, we are considering an annulus around each of the $p$ and $q$ contained entirely in the neighborhood of a single chart, and
  \item $\frac{{{R}_{2}}}{{{r}_{2}}}=\frac{{{R}_{1}}}{{{r}_{1}}}.$
\end{itemize}
Set now \[X_{i}^{*}={{X}_{i}}\backslash \psi _{i}^{-1}\left( \overline{\Delta \left( 0,{{r}_{i}} \right)} \right).\]

Choosing the biholomorphism ${{f}_{\lambda }}:{{\mathbb{A}}_{1}}\to {{\mathbb{A}}_{2}}$ as above,  ${{f}_{\lambda }}$ is used to glue the two Riemann surfaces $X_{1}, X_{2}$ along the inverse image of the annuli ${{\mathbb{A}}_{1}},{{\mathbb{A}}_{2}}$ on the surfaces, via the biholomorphism
\[{{g}_{\lambda }}:{{\Omega }_{1}}=\psi _{1}^{-1}\left( {{\mathbb{A}}_{1}} \right)\to {{\Omega }_{2}}=\psi _{2}^{-1}\left( {{\mathbb{A}}_{2}} \right)\] with ${{g}_{\lambda }}=\psi _{2}^{-1}\circ {{f}_{\lambda }}\circ {{\psi }_{1}}$.

Define ${{X}_{\lambda }}={{X}_{1}}{{\#}_{\lambda }}{{X}_{2}}=X_{1}^{*}\coprod{X_{2}^{*}}/\sim$, where the gluing of ${{\Omega }_{1}}$ and ${{\Omega }_{2}}$ is performed through the equivalence relation which identifies $y\in {{\Omega }_{1}}$ with $w\in {{\Omega }_{2}}$ iff $w={{g}_{\lambda }}\left( y \right)$. For collections of $s$-many distinct points $D_{1}$ on $X_{1}$ and $D_{2}$ on $X_{2}$, this procedure is assumed to be taking place for annuli around each pair of points $\left( p,q \right)$ for $p\in {{D}_{1}}$ and $q\in {{D}_{2}}$.

If ${{X}_{1}},{{X}_{2}}$ are orientable and orientations are chosen for both, since ${{f}_{\lambda }}$ is orientation preserving we obtain a natural orientation on the connected sum ${{X}_{1}}\#{{X}_{2}}$ which coincides with the given ones on $X_{1}^{*}$ and $X_{2}^{*}$.

Therefore, ${{X}_{\#}}={{X}_{1}}\#{{X}_{2}}$ is a Riemann surface of genus ${{g}_{1}}+{{g}_{2}}+s-1$, the \emph{complex connected sum}\index{complex connected sum}, where $g_{i}$ is the genus of the $X_{i}$ and $s$ is the number of points in $D_{1}$ and $D_{2}$. Its complex structure however is heavily dependent on the parameters ${{p}_{i}},{{q}_{i}},\lambda $.

\subsection{Gluing at the level of solutions to Hitchin's equations}\label{gluing_solutions}
For gluing two parabolic $G$-Higgs bundles over a complex connected sum ${{X}_{\#}}$ of Riemann surfaces, we choose to switch to the language of solutions to Hitchin's equations and make use of the analytic techniques of Clifford Taubes for gluing instantons over 4-manifolds \cite{Taubes} in order to control the stability condition. These techniques have been applied to establish similar gluing constructions for solutions to gauge-theoretic equations, as for instance in \cite{DonKron}, \cite{Foscolo}, \cite{He}, \cite{Safari}, and they pertain first to finding good \textit{local model solutions} of the gauge-theoretic equations. Then one has to put, using appropriate gauge transformations, the initial data into these model forms, which are identified locally over annuli around the marked points, thus allowing a construction of a new pair over ${{X}_{\#}}$ that combines the original data from $X_{1}$ and $X_{2}$. This produces, however, an \textit{approximate solution} of the equations, which then has to be corrected to an exact solution via a gauge transformation. The argument providing the existence of such a gauge is translated into a Banach fixed point theorem argument and involves the study of the linearization of a relevant elliptic operator. We briefly describe these steps in the sequel; for complete proofs we refer to \cite{Kydonakis} and \cite{Kydon-article}.

\subsubsection{The local model}\label{local_model} Local $\text{SL(2}\text{,}\mathbb{R}\text{)}$-model solutions to the Hitchin equations can be obtained by studying the behavior of the harmonic map between a surface $X$ with a given complex structure and the surface $X$ with the corresponding Riemannian metric of constant curvature -4, under degeneration of the domain Riemann surface $X$ to a nodal surface; a Riemann surface with nodes arises from an unnoded surface by pinching off one or more simple closed curves (see \cite{Swoboda}, \cite{Wolf2} for a detailed description).

Let $(E, \Phi)$ be an $\text{SL(2}\text{,}\mathbb{R}\text{)}$-Higgs bundle over $X$ with $E=L\oplus {{L}^{-1}}$ for $L$ a holomorphic square root of the canonical line bundle over $X$ endowed with an auxiliary Hermitian metric ${{h}_{0}}$, and $\Phi =\left( \begin{matrix}
   0 & q  \\
   1 & 0  \\
\end{matrix} \right)\in {{H}^{0}}\left( X ,\mathfrak{sl}\left( E \right) \right)$ for $q$ a holomorphic quadratic differential. If $(E, \Phi)$ is stable, then there is an induced Hermitian metric $H_{0}={{h}_{0}}\oplus h_{0}^{-1}$ on $E$ and an associated Chern connection $A$ with respect to $h$, such that $A={{A}_{L}}\oplus A_{L}^{-1}$, where $A_{L}$ denotes the restriction of the connection $A$ to the line bundle $L$. The stability condition implies that there exists a complex gauge transformation $g$ unique up to unitary gauge transformations, such that $\left( {{A}_{1,s}},{{\Phi }_{1,s}} \right):={{g}^{*}}\left( A,\Phi  \right)$ is a solution to the Hitchin equations.  Calculations in \cite{Swoboda} considering the Hermitian metric on $L$ and a complex gauge giving rise to an exact solution $\left({{A}_{1,s}}, {\Phi_{1,s}} \right)$ of the self-duality equations imply that 
\[{{A}_{1,s}}=O\left( {{\left| \zeta  \right|}^{s}} \right)\left( \begin{matrix}
   1 & 0  \\
   0 & -1  \\
\end{matrix} \right)\left( \frac{d\zeta }{\zeta }-\frac{d\bar{\zeta }}{{\bar{\zeta }}} \right),\text{   }{{\Phi }_{1,s}}=\left( 1+O\left( {{\left| \zeta  \right|}^{s}} \right) \right)\left( \begin{matrix}
   0 & \frac{s}{2}  \\
   \frac{s}{2} & 0  \\
\end{matrix} \right)\frac{d\zeta }{i\zeta }\]
for local coordinates $\zeta$. Therefore, after a unitary change of frame, the Higgs field ${{\Phi }_{1,s}}$ is asymptotic to the model Higgs field $\Phi _{s}^{\bmod }=\left( \begin{matrix}
   \frac{s}{2} & 0  \\
   0 & -\frac{s}{2}  \\
\end{matrix} \right)\frac{d\zeta }{i\zeta }$, while the connection ${{A}_{1,s}}$ is asymptotic to the trivial flat connection.

In conclusion, the \emph{model solution}\index{Hitchin equation!model solution} to the $\text{SL(2}\text{,}\mathbb{R}\text{)}$-Hitchin equations we will be considering is described by
	\[{{A}^{\bmod }}=0,\text{   }{{\Phi }^{\bmod }}=\left( \begin{matrix}
   C & 0  \\
   0 & -C  \\
\end{matrix} \right)\frac{dz}{z}\]
over a punctured disk with $z$-coordinates around the puncture with the condition that $C\in \mathbb{R}$ with $C\ne 0$ and that the meromorphic quadratic differential $q:=\det {{\Phi }^{\bmod }}$ has at least one simple zero. That this is indeed the generic case, is discussed in \cite{MSWW}.

\subsubsection{Approximate solutions of the $\text{SL(2}\text{,}\mathbb{R}\text{)}$-Hitchin equations}\label{approximate_solution} 
Let $X$ be a compact Riemann surface and $D:=\left\{ {{p}_{1}},\ldots ,{{p}_{s}} \right\}$ a collection of $s$ distinct points on $X$. Moreover, let $\left( E,h \right)$ be a Hermitian vector bundle on $E$. Choose an initial pair $\left( {{A}^{\bmod }},{{\Phi }^{\bmod }} \right)$ on $E$, such that in some unitary trivialization of $E$ around each point $p\in D$, the pair coincides with the local model from \S \ref{local_model}; of course, on the interior of each region $X\backslash \left\{ {{p}} \right\}$ the pair $\left( {{A}^{\bmod }},{{\Phi }^{\bmod }} \right)$ need not satisfy the Hitchin equations.

One can then define \emph{global Sobolev spaces on $X$}\index{global Sobolev space} as the spaces of admissible deformations of the model unitary connection and the model Higgs field $\left( {{A}^{\bmod }},{{\Phi }^{\bmod }} \right)$ and introduce the moduli space $\mathsf{\mathcal{M}}\left( {{X}^{\times }} \right)$ of solutions to the Hitchin equations modulo unitary gauge transformation, which are close to the model solution over a punctured Riemann surface ${{X}^{\times }}:=X-D$ for some fixed parameter $C\in \mathbb{R}$; this moduli space was explicitly constructed by Konno in \cite{Konno} as a hyperk{\"a}hler quotient.

In fact, as was shown by Biquard and Boalch (Lemma 5.3 in \cite{BiBo}) and later improved by Swoboda (Lemma 3.2 in \cite{Swoboda}), a pair $\left( A,\Phi  \right)\in \mathsf{\mathcal{M}}\left( {{X}^{\times }} \right)$ is asymptotically close to the model $\left( {{A}^{\text{mod}}},{{\Phi }^{\text{mod}}} \right)$ near each puncture in $D$. In particular, there exists a complex gauge transformation $g=\exp \left( \gamma  \right)$ such that ${{g}^{*}}\left( A,\Phi  \right)$ coincides with $\left( {{A}_{p}^{\bmod }},{{\Phi }_{p}^{\bmod }} \right)$ on a sufficiently small neighborhood of the point $p$, for each $p\in D$.

We shall now use this complex gauge transformation as well as a smooth cut-off function to obtain an approximate solution to the $\text{SL(2}\text{,}\mathbb{R}\text{)}$-Hitchin equations. For fixed local coordinates $z$ around each puncture $p$ and given the positive function $r = \left| z \right|$ around the puncture, fix a constant $0<R<1$ and choose a smooth cut-off function ${{\chi }_{R}}:\left[ 0,\infty  \right)\to \left[ 0,1 \right]$ with $\text{supp}\chi \subseteq \left[ 0,R \right]$ and ${{\chi }_{R}}\left( r \right)=1$ for $r\le \frac{3R}{4}$. We impose the further requirement on the growth rate of this cut-off function:
\begin{equation}\label{growth_rate}
  \left| r{{\partial }_{r}}{{\chi }_{R}} \right|+\left| {{\left( r\partial r \right)}^{2}}{{\chi }_{R}} \right|\le k
\end{equation}
for some constant $k$ not depending on $R$.

The map $x\mapsto {{\chi }_{R}}\left( r\left( x \right) \right):{{X}^{\times }}\to \mathbb{R}$ gives rise to a smooth cut-off function on the punctured surface ${{X}^{\times }}$ which by a slight abuse of notation we shall still denote by ${{\chi }_{R}}$. We may use this function ${{\chi }_{R}}$ to glue the two pairs $\left( A,\Phi  \right)$ and $\left( A_{p}^{\bmod },\Phi _{p}^{\bmod } \right)$ into an \emph{approximate solution}\index{Hitchin equation!approximate solution}
	\[\left( A_{R}^\textit{app},\Phi _{R}^\textit{app} \right):=\exp {{\left( {{\chi }_{R}}\gamma  \right)}^{*}}\left( A,\Phi  \right).\]
The pair $\left( A_{R}^\textit{app},\Phi _{R}^\textit{app} \right)$ is a smooth pair and is by construction an exact solution of the Hitchin equations away from each punctured neighborhood ${{\mathsf{\mathcal{U}}}_{p}}$, while it coincides with the model pair $\left( A_{p}^{\bmod },\Phi _{p}^{\bmod } \right)$ near each puncture. More precisely, we have:
\[\left( A_{R}^\textit{app},\Phi _{R}^\textit{app} \right)=\left\{ \begin{matrix}
   \left( A,\Phi  \right),\text{ over }X\backslash \bigcup\limits_{p\in D}{\left\{ z\in {{\mathsf{\mathcal{U}}}_{p}}\left| \frac{3R}{4}\le \left| z \right|\le R \right. \right\}}  \\
   \left( A_{p}^{\bmod },\Phi _{p}^{\bmod } \right),\text{ over }\left\{ z\in {{\mathsf{\mathcal{U}}}_{p}}\left| 0<\left| z \right|\le \frac{3R}{4} \right. \right\}\text{, for each }p\in D.  \\
\end{matrix} \right.\]

\vspace{5mm}
\begin{figure}
 \centering
\includegraphics[width=0.6\linewidth,height=0.6\textheight,keepaspectratio]{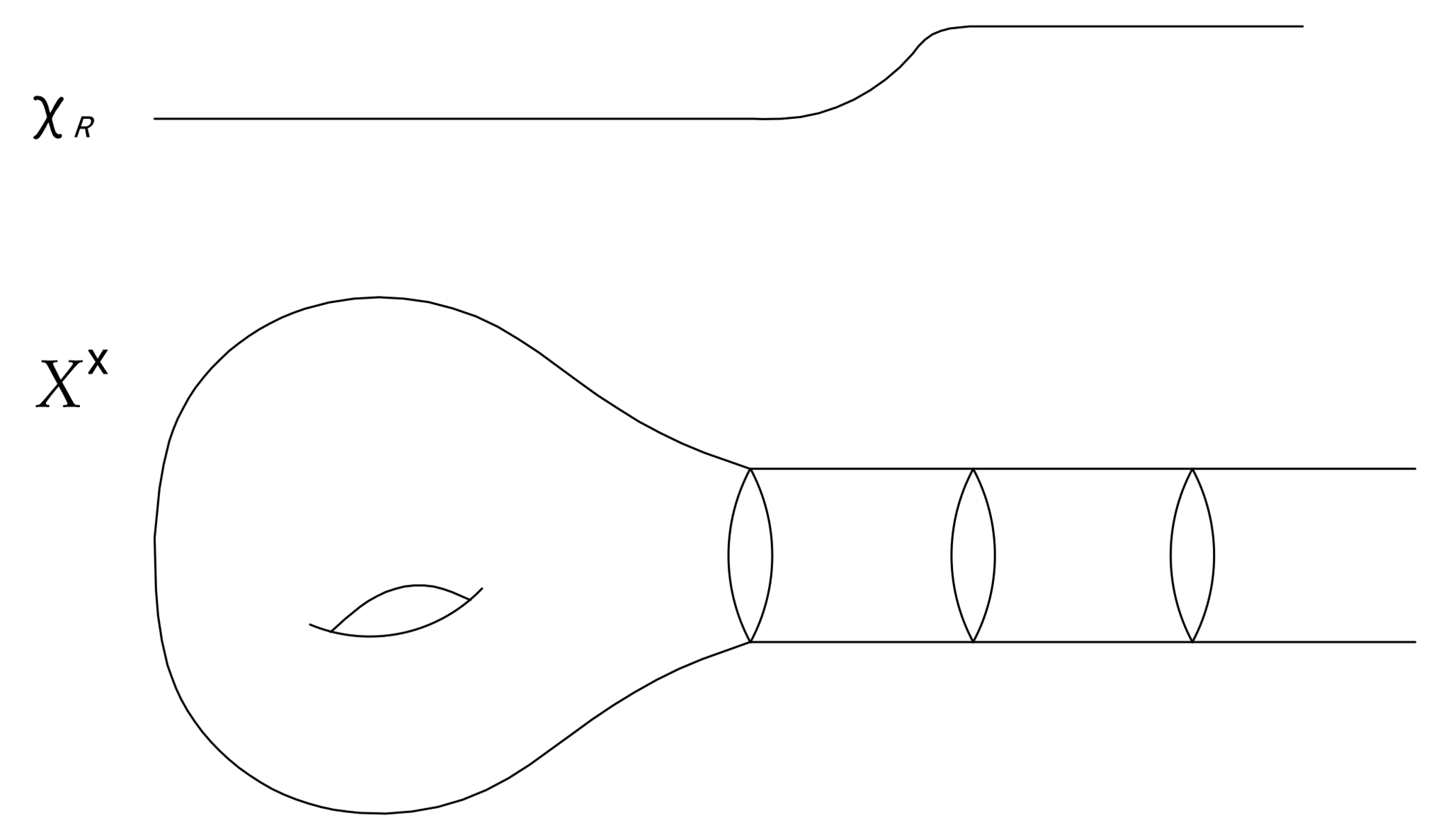}
 \caption{Constructing an approximate solution over the punctured surface ${{X}^{\times }}$.}
\end{figure}
\vspace{5mm}

Since $\left( A_{R}^\textit{app},\Phi _{R}^\textit{app} \right)$ is complex gauge equivalent to an exact solution $\left( A,\Phi  \right)$ of the Hitchin equations, the Higgs field $\Phi _{R}^\textit{app}$ is holomorphic with respect to the holomorphic structure ${{\bar{\partial }}_{A_{R}^\textit{app}}}$, in other words, one has  ${{\bar{\partial }}_{A_{R}^\textit{app}}}\Phi _{R}^\textit{app}=0$.
Moreover, assumption (\ref{growth_rate}) on the growth rate of the bump function ${{\chi }_{R}}$ provides us with a good estimate of the error up to which $\left( A_{R}^\textit{app},\Phi _{R}^\textit{app} \right)$ satisfies the first among the Hitchin equations, 
$F\left( A \right)+\left[ \Phi ,{{\Phi }^{*}} \right]=0$.

\subsection{Approximate solutions to the $G$-Hitchin equations}
We now wish to obtain an approximate $G$-Higgs pair by extending the $\text{SL(2}\text{,}\mathbb{C}\text{)}$-data via an embedding
\[\phi :\text{SL(2}\text{,}\mathbb{R}\text{)}\hookrightarrow G,\]
for a reductive Lie group $G$. It is important that copies of a maximal compact subgroup of $\text{SL(2}\text{,}\mathbb{R}\text{)}$ are mapped via $\phi$ into copies of a maximal compact subgroup of $G$ and that the norm of the infinitesimal deformation ${\phi }_{*}$ on the complexified Lie algebra ${{\mathfrak{g}}^{\mathbb{C}}}$ satisfies a Lipschitz condition. Assuming that this is indeed the case for an embedding $\phi $ (examples can be found in \cite{Kydon-article} and will be demonstrated in \S \ref{Examples-orthogonal}), one gets by extension via the embedding $\phi $ a ${{G}^{\mathbb{C}}}$-pair satisfying the $G$-Hitchin equations up to an error, which we have good control of. 

For $i=1,2$, let $X_{i}$ be a closed Riemann surface of genus $g_{i}$ and let ${{D}_{1}}=\left\{ {{p}_{1}},\ldots ,{{p}_{s}} \right\}$, ${{D}_{2}}=\left\{ {{q}_{1}},\ldots ,{{q}_{s}} \right\}$ a divisor of $s$-many distinct points on  ${{X}_{1}}$, ${{X}_{2}}$ respectively. Choose local coordinates $z$ near the points in ${{D}_{1}}$ and local coordinates $w$ near the points in ${{D}_{2}}$. Assume that we get via an embedding as was described above approximate solutions $\left( {{A}_{1}},{{\Phi }_{1}} \right)$, $\left( {{A}_{2}},{{\Phi }_{2}} \right)$, which agree over neighborhoods around the points in the divisors ${{D}_{1}}$ and ${{D}_{2}}$, with ${{A}_{1}}={{A}_{2}}=0$ and with ${{\Phi }_{1}}\left( z \right)= -{{\Phi }_{2}}\left( w \right)$. Then, there is a suitable frame for the connections over which the Hermitian metrics are both described by the identity matrix and so they are constant in particular. Set $\left( A_{p,q}^{\bmod },\Phi _{p,q}^{\bmod } \right):=\left( A_{1,p}^{\bmod },\Phi _{1,p}^{\bmod } \right)=-\left( A_{2,q}^{\bmod },\Phi _{2,q}^{\bmod } \right)$. We can glue the pairs $\left( {{A}_{1}},{{\Phi }_{1}} \right),\left( {{A}_{2}},{{\Phi }_{2}} \right)$ together to get an \emph{approximate solution}\index{Hitchin equation!approximate solution} of the $G$-Hitchin equations over the complex connected sum ${{X}_{\#}}:=X_{{1}}\#X_{{2}}$:

\[\left( A_{R}^\textit{app},\Phi _{R}^\textit{app} \right):=\left\{ \begin{matrix}
   \left( {{A}_{1}},{{\Phi }_{1}} \right), & {} & \text{over }{{X}_{1}}\backslash {{X}_{2}};  \\
   \left( A_{p,q}^{\bmod },\Phi _{p,q}^{\bmod } \right), & {} & \text{         over } \Omega \text{ around each pair of points } \left( p,q \right);  \\
   \left( {{A}_{2}},{{\Phi }_{2}} \right), & {} & \text{over }{{X}_{2}}\backslash {{X}_{1}}.  \\
\end{matrix} \right.\]

\vspace{5mm}
\begin{figure}
\centering
  \includegraphics[width=0.6\linewidth,height=0.6\textheight,keepaspectratio]{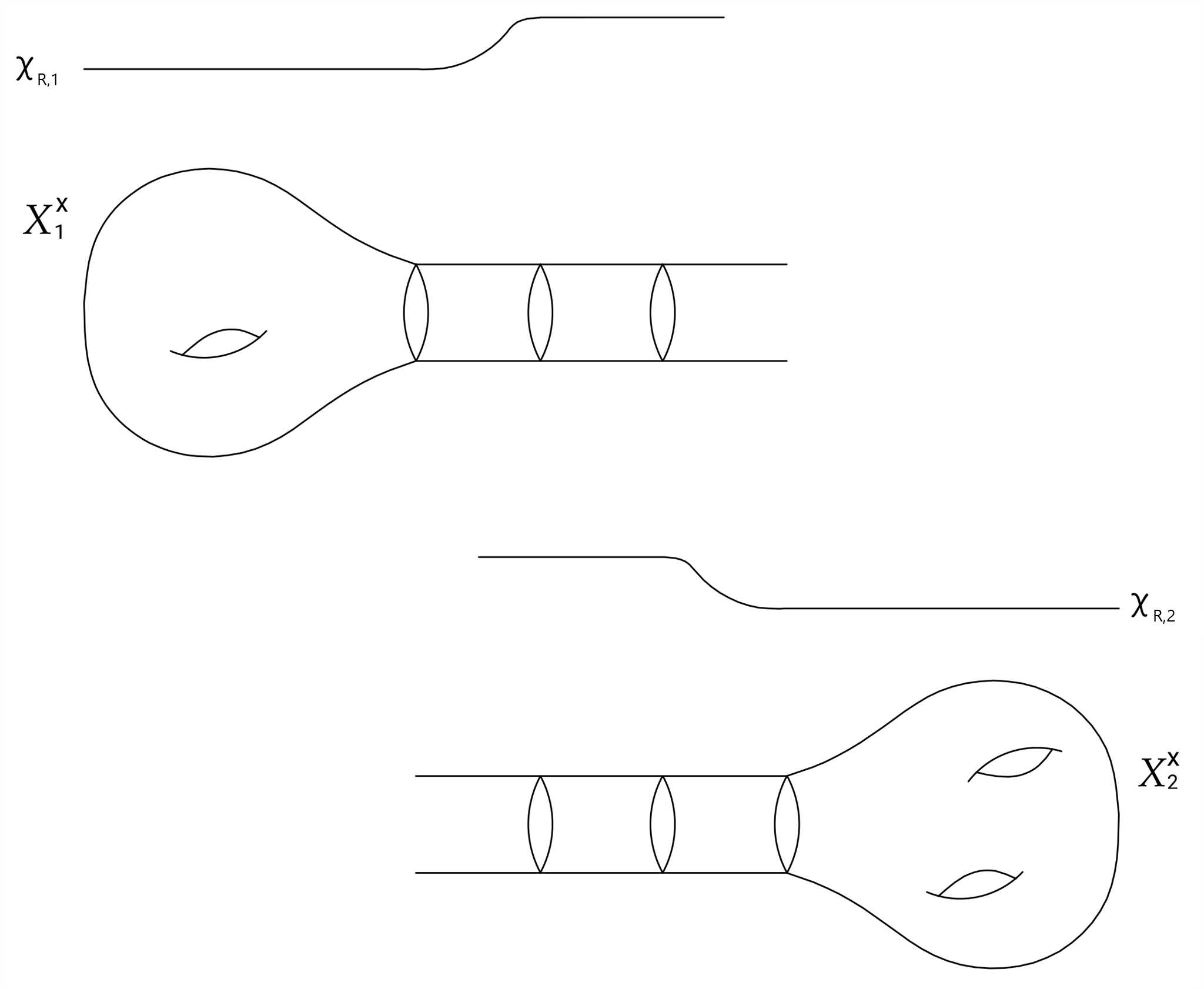}
  \caption{Constructing approximate solutions over $X_{1}^{\times }$ and $X_{2}^{\times }$.}
\end{figure}
\vspace{5mm}

\begin{figure}
\centering
  \includegraphics[width=0.6\linewidth,height=0.6\textheight,keepaspectratio]{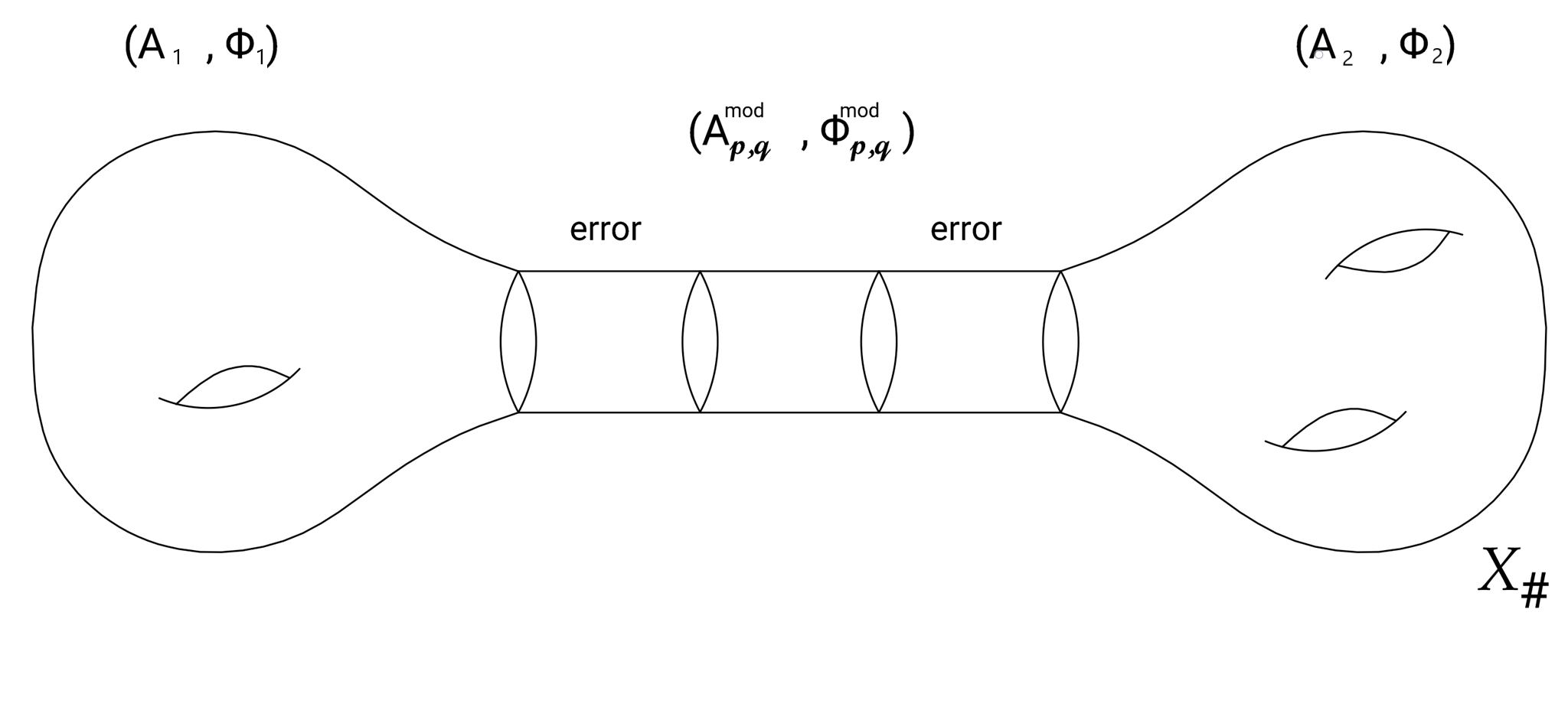}
   \caption{The approximate solution $\left( A_{R}^\textit{app},\Phi _{R}^\textit{app} \right)$ over the complex connected sum ${{X}_{\#}}$.}
\end{figure}
\vspace{5mm}

By construction, $\left( A_{R}^\textit{app},\Phi _{R}^\textit{app} \right)$ is a smooth pair on ${{X}_{\#}}$, complex gauge equivalent to an exact solution of the Hitchin equations by a smooth gauge transformation defined over all of ${{X}_{\#}}$. It satisfies the second Hitchin equation (holomorphicity), while the first equation is satisfied up to an error which we have good control of.

\subsection{The contraction mapping argument}\label{contraction_mapping}
A standard strategy, due largely to Taubes \cite{Taubes}, for correcting an approximate solution to an exact solution of gauge-theoretic equations involves studying the linearization of a relevant elliptic operator. In the Higgs bundle setting, the linearization of the Hitchin operator was first described in \cite{MSWW} and furthermore in \cite{Swoboda} for solutions to the $\text{SL(2}\text{,}\mathbb{C}\text{)}$-self duality equations over a nodal surface. We are going to use this analytic machinery to correct our approximate solution to an exact solution over the complex connected sum of Riemann surfaces. We summarize this strategy below.

Let $G$ be a connected, semisimple Lie group. For the complex connected sum ${{X}_{\#}}$, consider the nonlinear $G$-Hitchin operator\index{Hitchin operator} at a pair $\left( A,\Phi  \right)\in {{\Omega }^{1}}\left( {{X}_{\#}},{{E}_{H}}\left( {{\mathfrak{h}}^{\mathbb{C}}} \right) \right)\oplus {{\Omega }^{1,0}}\left( {{X}_{\#}},{{E}_{H}}\left( {{\mathfrak{m}}^{\mathbb{C}}} \right) \right)$:

\begin{equation}\label{Hitchin_operator_def}
\mathsf{\mathcal{H}}\left( A,\Phi  \right)=\left( F\left( A \right)-\left[ \Phi ,\tau \left( \Phi  \right) \right],{{{\bar{\partial }}}_{A}}\Phi  \right).
\end{equation}
Moreover, consider the orbit map
\[\gamma \mapsto {{\mathsf{\mathcal{O}}}_{\left( A,\Phi  \right)}}\left( \gamma  \right)={{g}^{*}}\left( A,\Phi  \right)=\left( {{g}^{*}}A,{{g}^{-1}}\Phi g \right),\]
for $g=\exp \left( \gamma  \right)$ and $\gamma \in {{\Omega }^{0}}\left( {{X}_{\#}},{{E}_{H}}\left( \mathfrak{h}^{\mathbb{C}} \right) \right)$, where $H\subset G$ is a maximal compact subgroup.

Therefore, correcting the approximate solution $\left( A_{R}^\textit{app},\Phi _{R}^\textit{app} \right)$ to an exact solution of the $G$-Hitchin equations accounts to finding a point $\gamma$ in the complex gauge orbit of $\left( A_{R}^\textit{app},\Phi _{R}^\textit{app} \right)$, for which $\mathsf{\mathcal{H}}\left( {{g}^{*}}\left( A_{R}^\textit{app},\Phi _{R}^\textit{app} \right) \right)=0$. However, since we have seen that the second equation is satisfied by the pair $\left( A_{R}^\textit{app},\Phi _{R}^\textit{app} \right)$ and since the condition ${{\bar{\partial }}_{A}}\Phi =0$ is preserved under the action of the complex gauge group ${{\mathsf{\mathcal{G}}}_{H}^{\mathbb{C}}}$, we actually seek a solution $\gamma $ to the following equation \[{{\mathsf{\mathcal{F}}}_{R}}\left( \gamma  \right):=p{{r}_{1}}\circ \mathsf{\mathcal{H}}\circ {{\mathsf{\mathcal{O}}}_{\left( A_{R}^\textit{app},\Phi _{R}^\textit{app} \right)}}\left( \exp (\gamma ) \right)=0.\]
For a Taylor series expansion of this operator
	\[{{\mathsf{\mathcal{F}}}_{R}}\left( \gamma  \right)=p{{r}_{1}}\mathsf{\mathcal{H}}\left( A_{R}^\textit{app},\Phi _{R}^\textit{app} \right)+{{L}_{\left( A_{R}^\textit{app},\Phi _{R}^\textit{app} \right)}}\left( \gamma  \right)+{{Q}_{R}}\left( \gamma  \right),\]
where ${{Q}_{R}}$ includes the quadratic and higher order terms in $\gamma $, we can then see that ${{\mathsf{\mathcal{F}}}_{R}}\left( \gamma  \right)=0$ if and only if $\gamma $ is a fixed point of the map
\begin{align*}
   T: H_{B}^{2}\left( {{X}_{\#}} \right) & \to H_{B}^{2}\left( {{X}_{\#}} \right) \\
   \gamma & \mapsto -{{G}_{R}}\left( \mathsf{\mathcal{H}}\left( A_{R}^\textit{app},\Phi _{R}^\textit{app} \right)+{{Q}_{R}}(\gamma ) \right),
\end{align*}
where we denoted ${{G}_{R}}:= L_{\left( A_{R}^\textit{app},\Phi _{R}^\textit{app} \right)}^{-1}$ and $H_{B}^{2}\left( {{X}_{\#}} \right)$ is the Hilbert space defined by
\[H_{B}^{2}\left( {{X}_{\#}} \right):=\left\{ \gamma \in {{L}^{2}}\left( {{X}_{\#}} \right)\left| {{\nabla }_{B}}\gamma ,\nabla _{B}^{2}\gamma  \right.\in {{L}^{2}}\left( {{X}_{\#}} \right) \right\},\]
for a fixed background connection $\nabla_{B}$ defined as a smooth extension to ${X}_{\#}$ of the model connection $A^{\text{mod}}_{p,q}$ over the cylinder for each pair of points $(p,q)$. 

The problem then reduces to showing that the mapping $T$ is a contraction\index{contraction} of the open ball ${{B}_{{{\rho }_{R}}}}$ of radius ${{\rho }_{R}}$ in $H_{B}^{2}\left( {{X}_{\#}} \right)$, since then from Banach's fixed point theorem there will exist a unique $\gamma $ such that $T\left( \gamma  \right)=\gamma $, in other words, such that ${{\mathsf{\mathcal{F}}}_{R}}\left( \gamma  \right)=0$. In particular, one needs to show that:
\begin{enumerate}
  \item $T$ is a contraction defined on ${{B}_{{{\rho }_{R}}}}$ for some ${{\rho }_{R}}$, and
  \item $T$ maps ${{B}_{{{\rho }_{R}}}}$ to ${{B}_{{{\rho }_{R}}}}$.
\end{enumerate}

In order to complete the above described contraction mapping argument, we need to show the following:

\begin{description}
  \item[i] The linearized operator\index{Hitchin equation!linearized operator} at the approximate solution ${{L}_{\left( A_{R}^\textit{app},\Phi _{R}^\textit{app} \right)}}$ is invertible.
  \item[ii] There is an upper bound for the inverse operator ${{G}_{R}}=L_{\left( A_{R}^\textit{app},\Phi _{R}^\textit{app} \right)}^{-1}$ as an operator ${{L}^{2}}\left( {{r}}drd\theta \right)\to {{L}^{2}}\left( {{r}}drd\theta \right)$.
  \item[iii] There is an upper bound for the inverse operator ${{G}_{R}}=L_{\left( A_{R}^\textit{app},\Phi _{R}^\textit{app} \right)}^{-1}$ also when viewed as an operator ${{L}^{2}}\left( {{r}}drd\theta \right)\to H_{B}^{2}\left( {{X}_{\#}},{{r}}drd\theta \right)$.
  \item[iv] We can control a Lipschitz constant for ${{Q}_{R}}$, that means there exists a constant $C>0$ such that
	\[{{\left\| {{Q}_{R}}\left( {{\gamma }_{1}} \right)-{{Q}_{R}}\left( {{\gamma }_{0}} \right) \right\|}_{{{L}^{2}}}}\le C\rho {{\left\| {{\gamma }_{1}}-{{\gamma }_{0}} \right\|}_{H_{B}^{2}}}\]
for all $0<\rho \le 1$ and ${{\gamma }_{0}},{{\gamma }_{1}}\in {{B}_{\rho }}$, the closed ball of radius $\rho$ around $0$ in $H_{B}^{2}\left( {{X}_{\#}} \right)$.
\end{description}

\subsection{Correcting an approximate solution to an exact solution}\label{section_linearization}

We shall focus on the linear term in the Taylor series expansion. The linearization operator  ${{L}_{\left( A,\Phi  \right)}}$ at a pair $\left( A,\Phi  \right)\in {{\Omega }^{1}}\left( {{X}_{\#}},{{E}_{H}}\left( {{\mathfrak{h}}^{\mathbb{C}}} \right) \right)\oplus {{\Omega }^{1,0}}\left( {{X}_{\#}},{{E}_{H}}\left( {{\mathfrak{m}}^{\mathbb{C}}} \right) \right)$ is defined by
 \[{{L}_{\left( A,\Phi  \right)}}:=-i*D\mathsf{\mathcal{F}}\left( \gamma  \right):{{\Omega }^{0}}\left( {{X}_{\#}},{{E}_{H}}\left( {{\mathfrak{h}}^{\mathbb{C}}} \right) \right)\to {{\Omega }^{0}}\left( {{X}_{\#}},{{E}_{H}}\left( {{\mathfrak{h}}^{\mathbb{C}}} \right) \right),\]
where $D\mathsf{\mathcal{F}}\left( \gamma  \right)$ denotes the differential
\[D\mathsf{\mathcal{F}}\left( \gamma  \right) =  {{\partial }_{A}}{{{\bar{\partial }}}_{A}} \gamma - {{{\bar{\partial }}}_{A}}{{\partial }_{A}} \gamma^{*} +\left[ \Phi ,-\tau \left( \left[ \Phi ,\gamma  \right] \right) \right]+\left[ \left[ \Phi ,\gamma  \right],-\tau \left( \Phi  \right) \right],
\]
for $H \subset G$ a maximal compact subgroup and $\tau$ the compact real form of $\mathfrak{g}^{\mathbb{C}}$. It satisfies the following lemma; for a proof, see Lemma 5.1 in \cite{Kydon-article}:

\begin{lemma}\label{linearization_lemma}
For $\gamma \in {{\Omega }^{0}}\left( {{X}_{\#}},{{E}_{H}}\left( \mathfrak{h} \right) \right)$, the linearization operator satisfies
\[{{\left\langle {{L}_{\left( A,\Phi  \right)}}\gamma ,\gamma  \right\rangle }_{{{L}^{2}}}}=\left\| {{d}_{A}}\gamma  \right\|_{{{L}^{2}}}^{2}+2\left\| \left[ \Phi ,\gamma  \right] \right\|_{{{L}^{2}}}^{2}\ge 0.\]
In particular, ${{L}_{\left( A,\Phi  \right)}}\gamma =0$ if and only if ${{d}_{A}}\gamma =\left[ \Phi ,\gamma  \right]=0$.
\end{lemma}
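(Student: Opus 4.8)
The plan is to establish the identity by a direct integration-by-parts computation, starting from the formula for $D\mathcal{F}(\gamma)$ given just above the statement, and then to read off the characterization of the kernel from the fact that the right-hand side is a sum of two manifestly non-negative terms.

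\textbf{Step 1: Unwind the definition.} First I would substitute the definition ${{L}_{\left( A,\Phi  \right)}}\gamma = -i * D\mathcal{F}(\gamma)$, with
\[
D\mathcal{F}(\gamma) = {{\partial }_{A}}{{{\bar{\partial }}}_{A}}\gamma - {{{\bar{\partial }}}_{A}}{{\partial }_{A}}\gamma^{*} + \left[\Phi, -\tau([\Phi,\gamma])\right] + \left[[\Phi,\gamma], -\tau(\Phi)\right],
\]
into the $L^{2}$-pairing $\langle {{L}_{\left( A,\Phi  \right)}}\gamma, \gamma\rangle_{L^{2}}$. Here one uses that $\gamma \in \Omega^{0}({{X}_{\#}}, E_{H}(\mathfrak{h}))$ takes values in the \emph{compact} form, so $\gamma^{*} = -\tau(\gamma)$ behaves well under the pairing (in fact for $\mathfrak{h}$-valued sections $\gamma^{*}=\gamma$ up to the sign convention in use), and the Hodge star together with the Killing-form-induced Hermitian metric turns $*D\mathcal{F}(\gamma)$ paired against $\gamma$ into an honest $L^{2}$ inner product of $\mathfrak{g}^{\mathbb{C}}$-valued forms.

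\textbf{Step 2: Integrate by parts on the connection terms.} The two terms ${{\partial }_{A}}{{{\bar{\partial }}}_{A}}\gamma$ and $-{{{\bar{\partial }}}_{A}}{{\partial }_{A}}\gamma^{*}$, after applying $-i*$ and pairing with $\gamma$, should combine — using $d_{A} = \partial_{A} + \bar\partial_{A}$ and the Kähler identities on the Riemann surface ${{X}_{\#}}$ (with its fixed background metric) — into $\langle \Delta_{A}\gamma, \gamma\rangle_{L^{2}}$ where $\Delta_{A}$ is the connection Laplacian $d_{A}^{*}d_{A}$. Integration by parts (there is no boundary since ${{X}_{\#}}$ is closed) then yields $\|d_{A}\gamma\|_{L^{2}}^{2}$. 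This is the standard Weitzenböck-type manipulation; the only care needed is tracking the factor of $i$ and the normalization of $*$ so that no spurious constant appears.

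\textbf{Step 3: Rearrange the Higgs-field terms.} The remaining two commutator terms, $[\Phi, -\tau([\Phi,\gamma])]$ and $[[\Phi,\gamma], -\tau(\Phi)]$, paired against $\gamma$, should each contribute $\|[\Phi,\gamma]\|_{L^{2}}^{2}$ after using the $\mathrm{ad}$-invariance of the Killing form $B$ (so that $\langle [\Phi,\xi], \eta\rangle = -\langle \xi, [\Phi,\eta]\rangle$ in the appropriate sense) together with the defining property of $\tau$ as the compact conjugation, which makes $-\langle \tau(\alpha), \beta\rangle$ into the Hermitian inner product $\langle \alpha, \beta\rangle$ on $\mathfrak{m}^{\mathbb{C}}$-valued $(1,0)$-forms. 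Adding the two identical contributions gives the factor $2$ in front of $\|[\Phi,\gamma]\|_{L^{2}}^{2}$. Combining Steps 2 and 3 proves the displayed identity.

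\textbf{Step 4: The kernel.} Since both $\|d_{A}\gamma\|_{L^{2}}^{2} \ge 0$ and $2\|[\Phi,\gamma]\|_{L^{2}}^{2} \ge 0$, the identity immediately gives $\langle {{L}_{\left( A,\Phi  \right)}}\gamma, \gamma\rangle_{L^{2}} \ge 0$, and if ${{L}_{\left( A,\Phi  \right)}}\gamma = 0$ then the left side vanishes, forcing both terms on the right to vanish, i.e. $d_{A}\gamma = 0$ and $[\Phi,\gamma]=0$; conversely these two conditions obviously make $D\mathcal{F}(\gamma)=0$. I expect the main obstacle to be purely bookkeeping rather than conceptual: getting every sign, every factor of $i$, and every appearance of $\tau$ versus $\tau^{-1}$ consistent so that the two Higgs-field terms genuinely coincide and the connection terms assemble into $\|d_{A}\gamma\|^{2}$ with coefficient exactly $1$ — a computation that is routine but unforgiving, and which is precisely what is carried out in detail in Lemma 5.1 of \cite{Kydon-article} that the statement cites.
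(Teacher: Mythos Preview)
The paper does not supply its own proof of this lemma: it simply refers the reader to Lemma~5.1 of \cite{Kydon-article}. Your sketch is the standard integration-by-parts and $\mathrm{ad}$-invariance computation one expects, and it is precisely the argument given in that reference (as you yourself acknowledge at the end), so there is nothing to compare.
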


In order to prove the existence of the inverse operator ${{G}_{R}}:= L_{\left( A_{R}^\textit{app},\Phi _{R}^\textit{app} \right)}^{-1}$ and obtain an upper bound for its $L^2$-norm, we apply a version of the Cappell--Lee--Miller gluing theorem for a pair of cylindrical $\mathbb{Z}_{2}$-graded Dirac-type operators (see \cite{CLM} and \cite[\S 5.B]{Nicolaescuarticle}). 

For our approximate solution $\left( A_{R}^\textit{app},\Phi _{R}^\textit{app} \right)$ constructed over ${{X}_{\#}}$ with $0<R<1$ and $T=-\log R$, consider the elliptic complex
\begin{align*}
0\xrightarrow{{}}{{\Omega }^{0}}\left( {{X}_{\#}},{{E}_{H}}\left( {{\mathfrak{h}}^{\mathbb{C}}} \right) \right) & \xrightarrow{{{L}_{1,T}}}{{\Omega }^{1}}\left( {{X}_{\#}},{{E}_{H}}\left( {{\mathfrak{h}}^{\mathbb{C}}} \right) \right)\oplus {{\Omega }^{1,0}}\left( {{X}_{\#}},{{E}_{H}}\left( {{\mathfrak{g}}^{\mathbb{C}}} \right) \right)\\
& \xrightarrow{{{L}_{2,T}}}{{\Omega }^{2}}\left( {{X}_{\#}},{{E}_{H}}\left( {{\mathfrak{h}}^{\mathbb{C}}} \right) \right)\oplus {{\Omega }^{2}}\left( {{X}_{\#}},{{E}_{H}}\left( {{\mathfrak{g}}^{\mathbb{C}}} \right) \right)\xrightarrow{{}}0,
\end{align*}
where \[{{L}_{1,T}}\gamma =\left( {{d}_{A_{R}^\textit{app}}}\gamma ,\left[ \Phi _{R}^\textit{app},\gamma  \right] \right)\]
is the linearization of the complex gauge group action and \[{{L}_{2,T}}\left( \alpha ,\varphi  \right)=D\mathsf{\mathcal{H}}\left( \alpha ,\varphi  \right)=\left( \begin{matrix}
   {{d}_{A_{R}^\textit{app}}}\alpha +\left[ \Phi _{R}^\textit{app},-\tau \left( \varphi  \right)\right]+\left[ \varphi ,-\tau \left( \Phi _{R}^\textit{app} \right) \right]   \\
   {{{\bar{\partial }}}_{A_{R}^\textit{app}}}\varphi +\left[ \alpha ,\Phi _{R}^\textit{app} \right]  \\
\end{matrix} \right)\]
is the differential of the Hitchin operator from (\ref{Hitchin_operator_def}). Note that, in general, it does not hold that $${{L}_{2,T}}{{L}_{1,T}}=\left[ {{F}_{A_{R}^\textit{app}}},\gamma  \right]+\left[ \left[ \Phi _{R}^\textit{app},-\tau \left( \Phi _{R}^\textit{app} \right) \right],\gamma  \right]=0,$$ since $\left( A_{R}^\textit{app},\Phi _{R}^\textit{app} \right)$ need not be an exact solution. Decomposing ${{\Omega }^{*}}\left( {{X}_{\#}},{{E}_{H}}\left( {{\mathfrak{g}}^{\mathbb{C}}} \right) \right)$ into forms of even, respectively odd total degree, we may introduce the  ${{\mathbb{Z}}_{2}}$-graded Dirac-type operator
	\[{{\mathfrak{D}}_{T}}:=\left( \begin{matrix}
   0 & L_{1,T}^{*}+{{L}_{2,T}}  \\
   {{L}_{1,T}}+L_{2,T}^{*} & 0  \\
\end{matrix} \right)\]
on the closed surface ${{X}_{\#}}$.

For applying the Cappell--Lee--Miller theorem, one has to study the kernel $\ker \left( {{L}_{1}}+L_{2}^{*} \right)$ on the extended $L^2$-space $L_{\text{ext}}^{2}\left( X_{\#}^{\times } \right)$ for the nodal surface $X_{\#}^{\times }$ obtained by extending the cylindrical neck of $X_{\#}$ infinitely (see Definition 6.2 and \S 6.2 in \cite{Kydon-article} for the precise definitions).  

As $R\searrow 0$, the curve ${{X}_{\#}}$ degenerates to a nodal surface $X_{\#}^{\times }$ (equivalently, the cylindrical neck of ${{X}_{\#}}$ extends infinitely). For the cut-off functions ${{\chi }_{R}}$ that we considered in obtaining the approximate pair $\left( A_{R}^\textit{app},\Phi _{R}^\textit{app} \right)$, their support will tend to be empty as $R\searrow 0$, therefore the ``error regions'' disappear along with the neck $\Omega$, thus $\left( A_{R}^\textit{app},\Phi _{R}^\textit{app} \right)\to \left( {{A}_{0}},{{\Phi }_{0}} \right)$ uniformly on compact subsets with
	\[\left( A_{0}^\textit{app},\Phi _{0}^\textit{app} \right)=\left\{ \begin{matrix}
   \left( {{A}_{1}},{{\Phi }_{1}} \right),\,\,\,{{X}_{1}}\backslash \Omega  \\
   \left( {{A}_{2}},{{\Phi }_{2}} \right),\,\,\,{{X}_{2}}\backslash \Omega  \\
\end{matrix} \right.\]
an exact solution with the holonomy of the associated flat connection in $G$.

For trivial kernel $\ker \left( {{L}_{1}}+L_{2}^{*} \right)$, and computing the upper bound for the inverse operator and a Lipschitz constant for the quadratic or higher order terms in the Taylor series expansion, one can correct the approximate solution constructed into an exact solution of the $G$-Hitchin equations. The contraction mapping argument described above then provides the following:

\begin{theorem}\label{correction}
There exists a constant $0<{{R}_{0}}<1$, and for every $0<R<{{R}_{0}}$ there exist a constant ${{\sigma }_{R}}>0$ and a unique section $\gamma \in H_{B}^{2}\left( {{X}_{\#}},E_{H} \left( \mathfrak{h}^{\mathbb{C}} \right) \right)$ satisfying ${{\left\| \gamma  \right\|}_{H_{B}^{2}\left( {{X}_{\#}} \right)}}\le {{\sigma }_{R}}$, so that, for $g=\exp \left( \gamma  \right)$,
	\[\left( {{A}_{\#}},{{\Phi }_{\#}} \right)={{g}^{*}}\left( A_{R}^\textit{app},\Phi _{R}^\textit{app} \right)\]
is an exact solution of the $G$-Hitchin equations over the closed surface ${{X}_{\#}}$.
\end{theorem}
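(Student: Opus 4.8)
The plan is to carry out the contraction mapping argument sketched in the preceding subsections, organized around the four technical ingredients \textbf{i}--\textbf{iv} listed at the end of \S\ref{contraction_mapping}. Recall that, because the second Hitchin equation is already satisfied by $\left( A_R^\textit{app}, \Phi_R^\textit{app} \right)$ and is preserved under the complex gauge action, it suffices to solve $\mathsf{\mathcal{F}}_R(\gamma) = 0$, which by the Taylor expansion
\[
\mathsf{\mathcal{F}}_R(\gamma) = pr_1 \mathsf{\mathcal{H}}\left( A_R^\textit{app}, \Phi_R^\textit{app} \right) + L_{\left( A_R^\textit{app}, \Phi_R^\textit{app} \right)}(\gamma) + Q_R(\gamma)
\]
is equivalent to $\gamma$ being a fixed point of $T(\gamma) = -G_R\left( \mathsf{\mathcal{H}}\left( A_R^\textit{app}, \Phi_R^\textit{app} \right) + Q_R(\gamma) \right)$. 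So the whole statement reduces to verifying that $T$ is a contraction of a small ball $B_{\sigma_R} \subset H_B^2(X_\#)$ into itself for $R$ small enough, after which Banach's fixed point theorem produces the unique $\gamma$ and hence the exact solution $\left( A_\#, \Phi_\# \right) = g^*\left( A_R^\textit{app}, \Phi_R^\textit{app} \right)$.

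First I would establish the invertibility of the linearized operator $L_{\left( A_R^\textit{app}, \Phi_R^\textit{app} \right)}$ (ingredient \textbf{i}) together with the two operator-norm bounds on $G_R$ (ingredients \textbf{ii} and \textbf{iii}). The mechanism here is the Cappell--Lee--Miller gluing theorem applied to the $\mathbb{Z}_2$-graded Dirac-type operator $\mathfrak{D}_T$ assembled from $L_{1,T}$ and $L_{2,T}$ on the closed surface $X_\#$. One decomposes $X_\#$ into the two truncated pieces $X_i \setminus \Omega$ together with the long cylindrical neck of length $T = -\log R$, and the CLM theorem says that, provided the \emph{limiting} operator $L_1 + L_2^*$ on the extended $L^2$-space $L_{\text{ext}}^2(X_\#^\times)$ over the nodal degeneration has trivial kernel, the glued operator $\mathfrak{D}_T$ is invertible for all large $T$ with an inverse bound that is uniform (or grows in a controlled polynomial way) in $T$. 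The triviality of $\ker\left( L_1 + L_2^* \right)$ is exactly Lemma \ref{linearization_lemma}: it forces $d_{A_0}\gamma = [\Phi_0, \gamma] = 0$, and since $\left( A_0, \Phi_0 \right)$ is the honest exact solution arising from the irreducible (stable) data on $X_1$ and $X_2$, such a $\gamma$ must vanish. Propagating this through CLM gives both the $L^2 \to L^2$ bound (\textbf{ii}) and, after elliptic bootstrapping on the cylinder using the explicit model connection $\nabla_B$, the $L^2 \to H_B^2$ bound (\textbf{iii}), with the operator norm of $G_R$ growing at most like a power of $T = -\log R$.

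Next I would handle ingredient \textbf{iv}, the quadratic remainder: $Q_R$ collects terms that are at least quadratic in $\gamma$ and built algebraically from $\Phi_R^\textit{app}$ via Lie brackets and the fixed involution $\tau$, so a pointwise estimate together with the Sobolev embedding $H_B^2(X_\#) \hookrightarrow C^0$ (valid in real dimension two, with constant controlled since the geometry on the neck is a fixed flat cylinder) yields $\left\| Q_R(\gamma_1) - Q_R(\gamma_0) \right\|_{L^2} \le C\rho \left\| \gamma_1 - \gamma_0 \right\|_{H_B^2}$ for $\gamma_0, \gamma_1 \in B_\rho$, with $C$ independent of $R$. Finally one estimates the ``error term'' $\left\| pr_1 \mathsf{\mathcal{H}}\left( A_R^\textit{app}, \Phi_R^\textit{app} \right) \right\|_{L^2}$: since the approximate pair is exact away from the annular gluing regions and coincides with the genuine model there, the error is supported on the bands where $\chi_R$ transitions, and the growth-rate hypothesis (\ref{growth_rate}) on $\chi_R$ forces this error to be $O(R^\delta)$ for some $\delta > 0$ — in particular it beats any fixed power of $T = -\log R$. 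Combining: $\left\| T(\gamma) \right\|_{H_B^2} \le \left\| G_R \right\| \left( \left\| \mathsf{\mathcal{H}}\left( A_R^\textit{app}, \Phi_R^\textit{app} \right) \right\|_{L^2} + C\sigma_R^2 \right)$ and the contraction estimate $\left\| T(\gamma_1) - T(\gamma_0) \right\|_{H_B^2} \le \left\| G_R \right\| C \sigma_R \left\| \gamma_1 - \gamma_0 \right\|_{H_B^2}$; choosing $\sigma_R$ comparable to $\left\| G_R \right\| \cdot \left\| \mathsf{\mathcal{H}}\left( A_R^\textit{app}, \Phi_R^\textit{app} \right) \right\|_{L^2}$ makes both right-hand sides favorable once $R < R_0$, so $T: B_{\sigma_R} \to B_{\sigma_R}$ is a contraction and the fixed point $\gamma$ exists, is unique, and satisfies $\left\| \gamma \right\|_{H_B^2(X_\#)} \le \sigma_R$.

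\textbf{The main obstacle} I anticipate is ingredient \textbf{iii} together with the bookkeeping that makes the final inequalities close: one needs the $L^2 \to H_B^2$ bound on $G_R$ to degrade at most polynomially in $T = -\log R$ while the Hitchin error decays like a genuine power $R^\delta$, and reconciling these two rates requires care in the CLM gluing estimate — in particular tracking how the small eigenvalues of $\mathfrak{D}_T$ behave as the neck lengthens, and confirming that the limiting kernel really is trivial rather than merely finite-dimensional (this is where Lemma \ref{linearization_lemma} and the stability/irreducibility of the building blocks are essential, and where a naive argument could fail if the glued-up data had extra symmetry). Everything downstream — the Sobolev constant, the Lipschitz bound for $Q_R$, the self-mapping property — is then routine, but the spectral analysis of the linearization on the degenerating surface is the technical heart, and is precisely the point at which the machinery of \cite{Swoboda}, \cite{MSWW} and \cite{CLM} is invoked.
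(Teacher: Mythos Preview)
Your proposal is correct and follows essentially the same approach as the paper: the paper does not give a self-contained proof of this theorem but rather presents it as the output of the contraction mapping argument laid out in \S\ref{contraction_mapping}--\S\ref{section_linearization}, with the four ingredients \textbf{i}--\textbf{iv}, the Cappell--Lee--Miller gluing theorem for the $\mathbb{Z}_2$-graded Dirac-type operator $\mathfrak{D}_T$, and the triviality of $\ker(L_1+L_2^*)$ on $L^2_{\text{ext}}(X_\#^\times)$ all invoked exactly as you describe, deferring the detailed estimates to \cite{Kydon-article}. Your identification of the spectral analysis on the degenerating surface as the technical heart, and of Lemma~\ref{linearization_lemma} together with stability of the building blocks as the source of kernel triviality, is precisely how the argument is structured there.
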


Theorem \ref{correction} now implies that for $\bar{\partial }:=A_{\#}^{0,1}$, the Higgs bundle $\left( {{E}_{\#}}:=\left( {{\mathbb{E}}_{\#}},\bar{\partial } \right),{{\Phi }_{\#}} \right)$ is a polystable $G$-Higgs bundle over the complex connected sum ${{X}_{\#}}$. Collecting the steps from the previous subsections one has the following:

\begin{theorem}\label{main_theorem}
Let $X_{1}$ be a closed Riemann surface of genus $g_{1}$ and ${{D}_{1}}=\left\{ {{p}_{1}},\ldots ,{{p}_{s}} \right\}$ be a collection of $s$ distinct points on $X_{1}$. Let also $G$ be a subgroup of $\rm{GL}\left(n, \mathbb{C} \right)$. Consider respectively a closed Riemann surface $X_{2}$ of genus $g_{2}$ and a collection of also $s$ distinct points ${{D}_{2}}=\left\{ {{q}_{1}},\ldots ,{{q}_{s}} \right\}$ on $X_{2}$. Let $\left( {{E}_{1}},{{\Phi }_{1}} \right)\to {{X}_{1}}$ and $\left( {{E}_{2}},{{\Phi }_{2}} \right)\to {{X}_{2}}$ be parabolic polystable $G$-Higgs bundles with corresponding solutions to the Hitchin equations $\left( {{A}_{1}},{{\Phi }_{1}} \right)$ and $\left( {{A}_{2}},{{\Phi }_{2}} \right)$. Assume that these solutions agree with model solutions $\left( A_{1,{{p}_{i}}}^{\bmod },\Phi _{1,{{p}_{i}}}^{\bmod } \right)$ and $\left( A_{2,{{q}_{j}}}^{\bmod },\Phi _{2,{{q}_{j}}}^{\bmod } \right)$ near the points ${{p}_{i}}\in {{D}_{1}}$ and ${{q}_{j}}\in {{D}_{2}}$, and that the model solutions satisfy $\left( A_{1,{{p}_{i}}}^{\bmod },\Phi _{1,{{p}_{i}}}^{\bmod } \right)=-\left( A_{2,{{q}_{j}}}^{\bmod },\Phi _{2,{{q}_{j}}}^{\bmod } \right)$, for $s$ pairs of points $\left( {{p}_{i}},{{q}_{j}} \right)$. Then there is a polystable $G$-Higgs bundle $\left( E_{\#},\Phi_{\#}  \right)\to {{X}_{\#}}$, constructed over the complex connected sum of Riemann surfaces ${{X}_{\#}}={{X}_{1}}\#{{X}_{2}}$, which agrees with the initial data over ${{X}_{\#}}\backslash {{X}_{1}}$ and ${{X}_{\#}}\backslash {{X}_{2}}$.
\end{theorem}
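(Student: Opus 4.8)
The plan is to assemble the constructions of the preceding subsections into one statement. First I would pass from the parabolic Higgs bundles to gauge-theoretic data: since $(E_1,\Phi_1)$ and $(E_2,\Phi_2)$ are parabolic polystable, the parabolic Hitchin--Kobayashi correspondence (the parabolic analogue of Theorem~\ref{Hitchin_Kobayashi}, going back to Simpson~\cite{Simpson-noncompact}) produces Hermitian metrics solving Hitchin's equations, hence the pairs $(A_1,\Phi_1)$ on $X_1^{\times}=X_1\backslash D_1$ and $(A_2,\Phi_2)$ on $X_2^{\times}=X_2\backslash D_2$. By the asymptotic analysis of Biquard--Boalch and Swoboda recalled in \S\ref{approximate_solution}, after a complex gauge transformation these solutions coincide near each puncture with the local model solutions of \S\ref{local_model}. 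The hypothesis of the theorem is precisely that, moreover, the models pair up as $(A_{1,p_i}^{\bmod},\Phi_{1,p_i}^{\bmod})=-(A_{2,q_j}^{\bmod},\Phi_{2,q_j}^{\bmod})$ --- the sign reflecting the orientation-reversing behaviour of the gluing biholomorphism $f_\lambda(z)=\lambda/z$ on the annular neck, and playing here the role of the ``holonomies agree along $\gamma$'' condition in the topological amalgamation construction.

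Second, I would form the complex connected sum $X_\#=X_1\#_\lambda X_2$ as in \S\ref{gluing_solutions}, a closed Riemann surface of genus $g_1+g_2+s-1$, and glue the two solutions over it. Using the cut-off functions $\chi_R$ one produces the approximate solution $(A_R^\textit{app},\Phi_R^\textit{app})$ on $X_\#$ which equals $(A_1,\Phi_1)$ on $X_\#\backslash X_2$, equals $(A_2,\Phi_2)$ on $X_\#\backslash X_1$, and equals the common model $(A_{p,q}^{\bmod},\Phi_{p,q}^{\bmod})$ over the neck $\Omega$ around each pair $(p_i,q_j)$. By construction this pair satisfies the holomorphicity equation $\bar\partial_{A_R^\textit{app}}\Phi_R^\textit{app}=0$ everywhere and solves the curvature equation exactly outside the finitely many annuli where $\chi_R$ is non-constant, with an error there controlled by the growth bound (\ref{growth_rate}).

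Third, I would invoke Theorem~\ref{correction} to correct this to an exact solution: for all sufficiently small $R$ there is a complex gauge transformation $g=\exp(\gamma)$ with $\|\gamma\|_{H^2_B}$ small so that $(A_\#,\Phi_\#):=g^{*}(A_R^\textit{app},\Phi_R^\textit{app})$ solves the $G$-Hitchin equations on $X_\#$. Setting $\bar\partial:=A_\#^{0,1}$, the Hitchin--Kobayashi correspondence (Theorem~\ref{Hitchin_Kobayashi}) then makes $(E_\#,\Phi_\#)$ a polystable $G$-Higgs bundle over $X_\#$; since the correcting gauge transformation is close to the identity and the original data are untouched away from the neck, $(E_\#,\Phi_\#)$ agrees with $(E_1,\Phi_1)$ over $X_\#\backslash X_2$ and with $(E_2,\Phi_2)$ over $X_\#\backslash X_1$, as claimed.

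The main obstacle is the hypothesis feeding Theorem~\ref{correction}, namely the invertibility of the linearized Hitchin operator $L_{(A_R^\textit{app},\Phi_R^\textit{app})}$ with a bound on $G_R=L_{(A_R^\textit{app},\Phi_R^\textit{app})}^{-1}$ that is uniform in $R$. By Lemma~\ref{linearization_lemma} this reduces to showing that the associated $\mathbb{Z}_2$-graded Dirac-type operator $\mathfrak{D}_T$ has trivial kernel $\ker(L_1+L_2^{*})$ on the extended $L^2$-space $L^2_{\text{ext}}(X_\#^{\times})$ of the nodal degeneration --- precisely the point where the polystability, in fact the non-degeneracy (absence of parabolic infinitesimal automorphisms) of the limiting pairs $(A_i,\Phi_i)$, is used. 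Granted this, the Cappell--Lee--Miller gluing theorem supplies bounds for $G_R$ as an operator $L^2\to L^2$ and $L^2\to H^2_B$, and together with a Lipschitz bound for the quadratic remainder $Q_R$ one verifies that $T(\gamma)=-G_R\bigl(\mathcal{H}(A_R^\textit{app},\Phi_R^\textit{app})+Q_R(\gamma)\bigr)$ contracts a small ball in $H^2_B(X_\#)$, whence Banach's fixed point theorem yields $\gamma$. The remaining ingredients --- the complex connected sum, the $\chi_R$-gluing, and the two passages through Hitchin--Kobayashi --- are then routine bookkeeping.
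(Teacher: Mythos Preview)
Your proposal is correct and follows essentially the same approach as the paper. The paper presents Theorem~\ref{main_theorem} precisely as you do, by ``collecting the steps from the previous subsections'': passing to solutions of the Hitchin equations via the parabolic Hitchin--Kobayashi correspondence, building the approximate solution $(A_R^\textit{app},\Phi_R^\textit{app})$ on $X_\#$ via the cut-off construction, and then invoking Theorem~\ref{correction} (whose proof is the contraction mapping argument you outline, with the Cappell--Lee--Miller input for the linearization) to correct it to an exact solution, which is polystable by the Hitchin--Kobayashi direction.
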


\begin{definition}
We call a $G$-Higgs bundle constructed by the procedure developed above a \textit{hybrid $G$-Higgs bundle}\index{hybrid Higgs bundle}\index{Higgs bundle!hybrid}.
\end{definition}

\subsection{Topological invariants}
The connected component of the moduli space $\mathcal{M}\left(G \right)$ that a hybrid Higgs bundle lies, can be determined by Higgs bundle topological invariants, and one needs to understand how these invariants behave under the complex connected sum operation. The next two propositions show that there is an additivity property for topological invariants over the connected sum operation, both from the Higgs bundle and the surface group representation point of view.

When the group $G$ is a subgroup of $\text{GL}\left( n,\mathbb{C} \right)$, the data of a parabolic $G$-Higgs bundle (defined in full generality in \cite{BGM}) reduce to the data of a parabolic Higgs bundle as seen in \S \ref{parabolic_data}. Moreover, the basic topological invariant of a parabolic (resp. non-parabolic) pair is the parabolic degree (resp. degree) of some underlying parabolic (resp. non-parabolic) bundle in the Higgs bundle data. We refer to \cite{KSZ} for a detailed description of this data and the corresponding topological invariants for a number of cases of parabolic $G$-Higgs bundles.  

The following proposition now describes an additivity property for the degrees:  
\begin{proposition}[Proposition 8.1 in \cite{Kydon-article}]\label{additivity_property}
Let ${{X}_{\#}}=X_{1}\#X_{2}$ be the complex connected sum of two closed Riemann surfaces $X_{1}$ and $X_{2}$ with divisors $D_{1}$ and $D_{2}$ of $s$ distinct points on each surface, and let ${{V}_{1}},{{V}_{2}}$ be parabolic vector bundles over $X_{1}$ and $X_{2}$ respectively. Then, if the parabolic bundles $V_{1}$, $V_{2}$ glue to a bundle $V_{1}\#V_{2}$ over ${{X}_{\#}}$, the following identity holds
\[\deg \left( {{V}_{1}}\#{{V}_{2}} \right) = par{\deg }\left( {{V}_{1}} \right)+par{\deg }\left( {{V}_{2}} \right).\]
\end{proposition}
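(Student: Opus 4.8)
The plan is to compute each of the three degrees as a Chern--Weil curvature integral, using Hermitian metrics adapted to the parabolic structures, and then to track how those metrics behave across the neck of the complex connected sum. First I would fix, for $i=1,2$, a smooth Hermitian metric $h_i$ on $V_i$ over $X_i\setminus D_i$ which is \emph{adapted} to the parabolic structure at each $x\in D_i$: in a holomorphic frame $v_1,\dots,v_n$ compatible with the weighted flag, the vector $v_k$ spanning a line of weight $\alpha_k$ has $|v_k|_{h_i}^2$ comparable to $|z|^{2\alpha_k}$ for a local coordinate $z$ centred at $x$. By Simpson's analysis of harmonic bundles on non-compact curves \cite{Simpson-noncompact} (and the classical theory of parabolic bundles), such an $h_i$ realises the parabolic degree as a convergent improper integral
\[
\mathrm{par}\deg(V_i)=\frac{\sqrt{-1}}{2\pi}\int_{X_i\setminus D_i}\mathrm{tr}\bigl(F_{h_i}\bigr),
\]
the extra term $\sum_{x}\sum_i k_i(x)\alpha_i(x)$ over $\deg V_i$ being produced by the non-exactness of the Chern connection $1$-form near the punctures. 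Moreover I would arrange $h_i$ to be \emph{exactly} the diagonal model metric $\mathrm{diag}(|z|^{2\alpha_1},\dots,|z|^{2\alpha_n})$ inside each coordinate disk $\Delta(x,R_i)$ used in the gluing; there $\mathrm{tr}(F_{h_i})=-\partial\bar\partial\log\det h_i=-2\bigl(\sum_k\alpha_k\bigr)\partial\bar\partial\log|z|$ vanishes identically off the puncture.

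Next I would glue the metrics. By hypothesis $V_1$ and $V_2$ glue to an honest holomorphic bundle $V_\#$ over the closed surface $X_\#$; in the adapted frames $\{v_k\}$ near $p\in D_1$ and $\{u_k\}$ near the point $q\in D_2$ to which $p$ is glued, holomorphy and invertibility of the transition of $V_\#$ over the overlap annulus force the identification to have the form $v_k=z^{m_k}u_k$ up to holomorphic units, with $m_k=\alpha_k(p)+\beta_k(q)\in\mathbb{Z}_{\ge 0}$, where $\beta_k(q)$ are the weights of $V_2$ at $q$; this complementarity of the parabolic weights is precisely what is implicit in the assumption that the gluing produces a bundle. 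Using the gluing map $w=\lambda/z$ one computes, in the frame $\{v_k\}$,
\[
|v_k|_{h_2}^2=|z|^{2m_k}\,|u_k|_{h_2}^2=|z|^{2m_k}\,|\lambda/z|^{2\beta_k(q)}=|\lambda|^{2\beta_k(q)}\,|z|^{2\alpha_k(p)},
\]
so over the overlap $h_1$ and $h_2$ differ only by the constant positive diagonal factor $\mathrm{diag}(|\lambda|^{2\beta_k(q)})$. Consequently there is a global smooth Hermitian metric $h_\#$ on $V_\#\to X_\#$: writing $X_\#=A_1\cup C\cup A_2$ with $A_i$ the part of $X_i$ lying outside a fixed sub-annulus $C$ of the neck, put $h_\#=h_i$ on $A_i$ and, on $C$ and in the frame $\{v_k\}$, let $|v_k|_{h_\#}^2=|z|^{2\alpha_k(p)}f_k(|z|)$ with $f_k$ a smooth positive function interpolating from $1$ to $|\lambda|^{2\beta_k(q)}$ and locally constant near $\partial C$. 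Then $\deg V_\#=\frac{\sqrt{-1}}{2\pi}\int_{X_\#}\mathrm{tr}(F_{h_\#})$, independently of all choices.

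The computation then reduces to splitting
\[
\int_{X_\#}\mathrm{tr}(F_{h_\#})=\int_{A_1}\mathrm{tr}(F_{h_1})+\int_{A_2}\mathrm{tr}(F_{h_2})+\int_{C}\mathrm{tr}(F_{h_\#}).
\]
Since $\mathrm{tr}(F_{h_i})$ vanishes on the model disks, enlarging each $A_i$ back to $X_i\setminus D_i$ leaves the first two integrals unchanged, so they equal $\tfrac{2\pi}{\sqrt{-1}}\,\mathrm{par}\deg(V_i)$ by the first step. For the collar integral, $\det h_\#=|z|^{2\sum_k\alpha_k(p)}F(|z|)$ on $C$ with $F:=\prod_k f_k$ smooth, positive and locally constant near $\partial C$, hence $\mathrm{tr}(F_{h_\#})=-\partial\bar\partial\log F(|z|)=-d\bigl(\bar\partial\log F(|z|)\bigr)$, and Stokes' theorem collapses $\int_C\mathrm{tr}(F_{h_\#})$ to contour integrals of $\bar\partial\log F$ over the two boundary circles of $C$, which vanish because $F$ is constant there. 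This yields $\deg(V_1\#V_2)=\mathrm{par}\deg(V_1)+\mathrm{par}\deg(V_2)$.

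The step I expect to be the main obstacle is the second one: justifying that gluing to an honest bundle over $X_\#$ forces the weights at each glued pair to be complementary, $\alpha_k(p)+\beta_k(q)\in\mathbb{Z}$, and that in the corresponding adapted frames the two metrics differ only by a constant factor. It is exactly this, together with the orientation-reversing behaviour of $f_\lambda(z)=\lambda/z$ on the two boundary circles (already noted in the construction of the complex connected sum), that makes the parabolic weights \emph{add} rather than cancel; keeping the bookkeeping --- and the integer ambiguity in the choice of parabolic extension --- straight is where the care is required. A metric-free alternative would run through \v{C}ech cohomology: $\deg V_\#$ is a sum of winding numbers of transition functions, the neck transition $\mathrm{diag}(z^{m_k})$ contributes $\sum_k m_k=\sum_k\bigl(\alpha_k(p)+\beta_k(q)\bigr)$ for each glued pair $(p,q)$, and reassembling this with $\deg V_1+\deg V_2$ gives the same identity.
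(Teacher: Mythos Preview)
The paper does not actually prove this proposition; it only states it and refers to Proposition~8.1 of \cite{Kydon-article} for the argument, so there is nothing here to compare against directly.

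That said, your Chern--Weil approach is sound and is almost certainly close in spirit to what the cited reference does: the entire gluing construction in \S\ref{gluing_solutions}--\ref{section_linearization} is carried out analytically via adapted Hermitian metrics and model solutions over punctured disks, so realising the parabolic degree as the curvature integral of an adapted metric and then splitting that integral over the pieces of $X_{\#}$ is the natural move in this context. You have correctly identified the one non-formal step, namely that the hypothesis ``$V_1$, $V_2$ glue to a bundle'' forces the complementarity $\alpha_k(p)+\beta_k(q)\in\mathbb{Z}$ of the weights at each matched pair of punctures; in the paper's actual applications this is automatic from the way the model solutions are chosen (the condition $\Phi_1^{\bmod}=-\Phi_2^{\bmod}$ under $\tfrac{dz}{z}=-\tfrac{dw}{w}$ makes the residues, hence the weights, pair correctly). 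Your collar computation, reducing $\int_C\mathrm{tr}(F_{h_\#})$ to a Stokes boundary term that vanishes because the interpolating function is locally constant near $\partial C$, is clean and correct. The \v{C}ech alternative you sketch at the end is also valid and arguably more elementary: it trades the metric analysis for a direct winding-number count of the neck transition $\mathrm{diag}(z^{m_k})$, which contributes exactly the sum of weights at each glued pair.
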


Considering the connected sum of the underlying topological surfaces $\Sigma ={{\Sigma }_{1}}{{\cup }_{\gamma}}{{\Sigma }_{2}}$ along a loop $\gamma$, a notion of Toledo invariant is defined for representations over these subsurfaces with boundary; see \cite{BIW} for a detailed definition in this context. Moreover, the authors in \cite{BIW} have established an additivity property for the Toledo invariant over a connected sum of surfaces. In particular:

\begin{proposition}
[Proposition 3.2 in \cite{BIW}] If $\Sigma ={{\Sigma }_{1}}{{\cup }_{\gamma}}{{\Sigma }_{2}}$ is the connected sum of two subsurfaces ${{\Sigma }_{i}}$ along a simple closed separating loop $\gamma$, then
\[{{\text{T}}_{\rho }}={{\text{T}}_{\rho_{1} }} \text{+} {{\text{T}}_{\rho_{2}}},\]
where ${{\rho }_{i}}=\rho \left| _{{{\pi }_{1}}\left( {{\Sigma }_{i}} \right)} \right.$, for  $i=1,2$.
\end{proposition}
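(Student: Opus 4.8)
The plan is to prove this by passing to the bounded-cohomology description of the Toledo invariant used in \cite{BIW}, under which both sides of the asserted identity become evaluations of (relative) cohomology classes on (relative) fundamental cycles, and then to run a Mayer--Vietoris argument at the chain level. Since $g\ge 2$, the surface $\Sigma$ and each subsurface $\Sigma_i$ are aspherical, so their (bounded) singular cohomology coincides with the (bounded) group cohomology of $\pi_1$. The K\"ahler class $\kappa_G\in H^2_c\left(G,\mathbb{R}\right)$ admits a bounded refinement $\kappa_G^b\in H^2_{cb}\left(G,\mathbb{R}\right)$, the bounded K\"ahler class, and for the closed surface one has $T_\rho=\langle\rho^*\kappa_G,[\Sigma]\rangle=\langle\rho^*\kappa_G^b,[\Sigma]\rangle$ via the comparison map. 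For a subsurface with boundary, $\partial\Sigma_i$ is a disjoint union of circles, whose fundamental groups are infinite cyclic, hence amenable, so $H^k_b\left(\partial\Sigma_i,\mathbb{R}\right)=0$ for $k\ge 1$; consequently $\rho_i^*\kappa_G^b$ lifts canonically to a class in the relative cohomology $H^2\left(\Sigma_i,\partial\Sigma_i;\mathbb{R}\right)$, and $T_{\rho_i}$ is, by the definition adopted in \cite{BIW}, the pairing of this relative class with the relative fundamental class $[\Sigma_i,\partial\Sigma_i]\in H_2\left(\Sigma_i,\partial\Sigma_i;\mathbb{Z}\right)$.

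With these definitions in hand, I would fix a smooth triangulation of $\Sigma$ for which $\gamma$ is a subcomplex, so that it restricts to triangulations of $\Sigma_1$ and $\Sigma_2$ meeting along $\gamma$. A fundamental cycle $z$ representing $[\Sigma]$ then splits as $z=z_1+z_2$ with $z_i$ supported on $\Sigma_i$; each $z_i$ is a relative fundamental cycle for $\left(\Sigma_i,\partial\Sigma_i\right)$, and $\partial z_1=-\partial z_2$ is supported on the common boundary $\gamma$, carrying opposite induced orientations from the two sides. Choosing a bounded cocycle $c$ representing $\rho^*\kappa_G^b$ whose restrictions to $\Sigma_1$ and $\Sigma_2$ represent $\rho_1^*\kappa_G^b$ and $\rho_2^*\kappa_G^b$, one evaluates
\[
T_\rho=\langle c,z\rangle=\langle c,z_1\rangle+\langle c,z_2\rangle .
\]
Here $\langle c,z_i\rangle$ differs from $\langle\rho_i^*\kappa_G^b|_{\mathrm{rel}},[\Sigma_i,\partial\Sigma_i]\rangle=T_{\rho_i}$ only by the value on $\partial z_i$ of a primitive of $c$ along $\gamma$; these two boundary contributions are equal and opposite because $\partial z_1=-\partial z_2$, hence cancel in the sum, giving $T_\rho=T_{\rho_1}+T_{\rho_2}$. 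Equivalently, on the de Rham side one may pick a smooth $\rho$-equivariant map $f\colon\widetilde\Sigma\to\mathcal{X}=G/H$ and write $2\pi T_\rho=\int_\Sigma f^*\omega_{\mathcal{X}}=\int_{\Sigma_1}f^*\omega_{\mathcal{X}}+\int_{\Sigma_2}f^*\omega_{\mathcal{X}}$, the difference between $\tfrac{1}{2\pi}\int_{\Sigma_i}f^*\omega_{\mathcal{X}}$ and $T_{\rho_i}$ being a boundary correction determined by $\rho|_{\langle\gamma\rangle}$ and the framing of $\mathcal{X}$ along $f|_\gamma$, and the corrections from the two sides of $\gamma$ cancelling since $\gamma$ is traversed with opposite orientations.

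The additivity of the integral, or of the evaluation pairing, is elementary; the actual content — and the step I expect to be the main obstacle — is justifying that $T_{\rho_i}$ is well defined for a representation of a surface-with-boundary group and that the boundary corrections genuinely cancel rather than leaving a residual term depending on $\rho|_{\langle\gamma\rangle}$. This is precisely what the bounded-cohomology formalism secures: the vanishing $H^{\ge 1}_b\left(\mathbb{Z},\mathbb{R}\right)=0$ produces a \emph{canonical} relative lift of the bounded K\"ahler class, so no arbitrary choice of boundary framing enters, and the relative fundamental classes of $\left(\Sigma_1,\partial\Sigma_1\right)$ and $\left(\Sigma_2,\partial\Sigma_2\right)$ glue to $[\Sigma]$ with matching orientations along $\gamma$. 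I would therefore carry out the argument entirely within that framework, reducing the statement to the functoriality of the comparison map $H^2_b\to H^2$ together with the Mayer--Vietoris compatibility of relative fundamental classes, and refer to \cite{BIW} for the foundational properties of the bounded K\"ahler class invoked along the way.
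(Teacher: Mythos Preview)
The paper does not give its own proof of this proposition; it simply quotes it as Proposition~3.2 of \cite{BIW} and moves on. Your outline is correct and is essentially the argument carried out in \cite{BIW}: the Toledo invariant for a surface with boundary is defined there via the bounded K\"ahler class, the amenability of $\pi_1(\partial\Sigma_i)\cong\mathbb{Z}$ yields the canonical relative lift in $H^2_b(\Sigma_i,\partial\Sigma_i;\mathbb{R})$, and additivity then follows from the decomposition of the fundamental cycle along $\gamma$ together with cancellation of the boundary contributions.
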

The two propositions above allow one to determine the topological invariants of the hybrid Higgs bundles, respectively fundamental group representations, from the topological invariants of the underlying objects that were deformed and glued together. Note, in particular, that this property implies that the amalgamated product of two maximal representations is again a maximal representation defined over the compact surface $\Sigma$.

\section{Examples: Model Higgs bundles in exceptional components of orthogonal groups}\label{Examples-orthogonal}

We now exhibit specific examples where the previous gluing construction can provide model objects lying inside higher Teichm\"{u}ller spaces of particular geometric importance. 

When the Lie group is $G=\text{Sp}\left(4, \mathbb{R} \right)$, hybrid Higgs bundles in the exceptional connected components of the maximal  $G=\text{Sp}\left(4, \mathbb{R} \right)$-Higgs bundles identified by Gothen in \cite{Gothen} were obtained in \cite{Kydon-article}. We next provide such examples in the case of the group $G=\text{SO}\left(p, p+1 \right)$, which involves an extra parameter compared to the $\text{Sp}\left(4, \mathbb{R} \right)$-case. Note, however, that a maximality property is not apparent in this case apart from when $p=2$, since the group $G=\text{SO}\left(p, p+1 \right)$ for $p \ne 2$ is not Hermitian of noncompact type; cf. the discussion on maximality in \S \ref{subsection_hTs}. 

\subsection{$\text{SO}\left( p,q \right)$-Higgs bundle data}

The connected components of the $\text{SO}\left( p,q \right)$-character variety $\mathsf{\mathcal{R}}\left( \text{SO}\left( p,q \right) \right)$ can be more explicitly described using the theory of Higgs bundles. Let $X$ be a compact Riemann surface with underlying topological surface ${{\Sigma }}$. Under the non-abelian Hodge correspondence, fundamental group representations into the group  $\text{SO}\left( p,q \right)$ correspond to holomorphic tuples  $\left( V,{{Q}_{V}},W,{{Q}_{W}},\eta  \right)$ over $X$, where:
\begin{itemize}
\item  $\left( V,{{Q}_{V}} \right)$ and $\left( W,{{Q}_{W}} \right)$ are holomorphic orthogonal bundles of rank $p$ and $q$ respectively with the additional condition that ${{\wedge }^{p}}\left( V \right)\cong {{\wedge }^{q}}\left( W \right)$.
\item $\eta :W\to V\otimes K$ is a holomorphic section of $\text{Hom}\left( W,V \right)\otimes K$.
\end{itemize}

Using Higgs bundle methods, in particular a real valued proper function defined by the $L^{2}$-norm of the Higgs field and a natural holomorphic ${{\mathbb{C}}^{*}}$-action, the authors in \cite{Aparicio et al} classify \emph{all} polystable local minima of the Hitchin function in $\mathsf{\mathcal{M}}\left( \text{SO}\left( p,q \right) \right)$, for $2<p\le q$. For these moduli spaces, not all local minima occur at fixed points of the ${{\mathbb{C}}^{*}}$-action and additional connected components of $\mathsf{\mathcal{M}}\left( \text{SO}\left( p,q \right) \right)$ emerge by constructing a map 
\[\Psi :{{\mathsf{\mathcal{M}}}_{{{K}^{p}}}}\left( \text{SO}\left( 1,q-p+1 \right) \right)\times \underset{j=1}{\overset{p-1}{\mathop \bigoplus }}\,{{H}^{0}}\left( X,{{K}^{2j}} \right)\to \mathsf{\mathcal{M}}\left( \text{SO}\left( p,q \right) \right),\]
which is an isomorphism onto its image, open and closed. In the description above,  ${{\mathsf{\mathcal{M}}}_{{{K}^{p}}}}\left( \text{SO}\left( 1,q-p+1 \right) \right)$ denotes the moduli space of ${{K}^{p}}$-twisted  $\text{SO}\left( 1,q-p+1 \right)$-Higgs bundles on the Riemann surface $X$, where $K$ is the canonical line bundle over $X$, and $\underset{j=1}{\overset{p-1}{\mathop \bigoplus }}\,{{H}^{0}}\left( X,{{K}^{2j}} \right)$ denotes the vector space of holomorphic differentials of degree $2j$. Note that a ${{K}^{p}}$-twisted $\text{SO}\left( 1,n \right)$-Higgs bundle is defined by a triple $\left( I,\hat{W},\hat{\eta } \right)$, where  $\left( \hat{W},{{Q}_{{\hat{W}}}} \right)$ is a rank $n$ orthogonal bundle, $I={{\wedge }^{n}}\hat{W}$  and $\hat{\eta }\in {{H}^{0}}\left( \text{Hom}\left( \hat{W},I \right)\otimes {{K}^{p}} \right)$. A point in the image of the map $\Psi $ is then described by
\begin{equation}\label{201}
\Psi \left( \left( I,\hat{W},\hat{\eta } \right),{{q}_{2}},\ldots ,{{q}_{2p-2}} \right)=\left( V,W,\eta  \right),
\end{equation}
where\\
$$V:=I\otimes \left( {{K}^{p-1}}\oplus {{K}^{p-3}}\oplus \ldots \oplus {{K}^{3-p}}\oplus {{K}^{1-p}} \right);$$
$$W:=\hat{W}\oplus I\otimes \left( {{K}^{p-2}}\oplus {{K}^{p-4}}\oplus \ldots \oplus {{K}^{4-p}}\oplus {{K}^{2-p}} \right);$$
\begin{equation}\label{202}
\eta :=\left( \begin{matrix}
   {\hat{\eta }} & {{q}_{2}} & {{q}_{4}} & \ldots  & {{q}_{2p-2}}  \\
   0 & 1 & {{q}_{2}} & \cdots  & {{q}_{2p-4}}  \\
   \vdots  & {} & \ddots  & {} & \vdots   \\
   \vdots  & {} & {} & 1 & {{q}_{2}}  \\
   0 & 0 & \cdots  & 0 & 1  \\
\end{matrix} \right).
\end{equation}
Moreover, an $\text{SO}\left( p,q \right)$-Higgs bundle  $\left( V,W,\eta  \right)$ is (poly)stable if and only if the ${{K}^{p}}$-twisted $\text{SO}\left( 1,n \right)$-Higgs bundle $\left( I,\hat{W},\hat{\eta } \right)$ is (poly)stable (see Lemma 4.4 in \cite{Aparicio et al}). 

The case when  $q=p+1$ is even more special, because the relevant ${{K}^{p}}$-twisted  $\text{O}\left( q-p+1 \right)$-Higgs bundles in the pre-image of  $\Psi $ are now rank 2 orthogonal bundles. In this case, when the first Stiefel-Whitney class ${{w}_{1}}\left( \hat{W},{{Q}_{{\hat{W}}}} \right)$ vanishes, then the structure group of $\hat{W}$ reduces to $\text{SO}\left( 2,\mathbb{C} \right)\cong {{\mathbb{C}}^{*}}$ and thus
	\[\left( \hat{W},{{Q}_{{\hat{W}}}} \right)\cong \left( M\oplus {{M}^{-1}},\left( \begin{matrix}
   0 & 1  \\
   1 & 0  \\
\end{matrix} \right) \right),\] 
for a degree $d$ holomorphic line bundle $M\in \text{Pi}{{\text{c}}^{d}}\left( X \right)$, while for stability reasons $d$ is an integer in the interval $\left[ 0,p\left( 2g-2 \right) \right]$. This degree is a new topological invariant, which distinguishes extra components of the moduli space $\mathsf{\mathcal{M}}\left( \text{SO}\left( p,p+1 \right) \right)$, and in \cite{Collier} is proven the following:

\begin{theorem}[Theorem 4.1 in \cite{Collier}] For each integer $d\in \left( 0,p\left( 2g-2 \right)-1 \right]$ there is a smooth connected component ${{\mathsf{\mathcal{R}}}_{d}}\left( \text{SO}\left( p,p+1 \right) \right)$ of the moduli space $\mathsf{\mathcal{R}}\left( \text{SO}\left( p,p+1 \right) \right)$, which does not contain representations with compact Zariski closure.
\end{theorem}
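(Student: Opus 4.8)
The plan is to transport the statement to the Higgs bundle side via the non-abelian Hodge correspondence (Theorem~\ref{naHc}) and to read off the component, its smoothness, and the claim about Zariski closures from the explicit parameterization $\Psi$ together with the formula~(\ref{202}) for the Higgs field. First I would record the dictionary for the target property. If a reductive representation $\rho:\pi_{1}(\Sigma)\to\text{SO}(p,p+1)$ has compact Zariski closure then $\rho(\pi_{1}(\Sigma))$ lies in a compact, hence reductive, subgroup, so up to conjugation in a maximal compact $H=\text{S}(\text{O}(p)\times\text{O}(p+1))$; conversely any such representation has compact Zariski closure. On the Higgs bundle side this is equivalent to the corresponding polystable $\text{SO}(p,p+1)$-Higgs bundle $(V,Q_{V},W,Q_{W},\eta)$ having vanishing Higgs field $\eta\equiv0$: by~(\ref{reductive}) the flat connection $D=D_{h}+\varphi-\tau(\varphi)$ is an $H$-connection exactly when $\varphi=0$, that is, exactly when $\eta\equiv0$.

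Second, I would isolate the relevant subset of the moduli space. Specializing the construction recalled above to $q=p+1$, the domain of $\Psi$ involves $K^{p}$-twisted $\text{SO}(1,2)$-Higgs bundles $(I,\hat W,\hat\eta)$, and both the first Stiefel--Whitney class $w_{1}(\hat W,Q_{\hat W})\in H^{1}(X,\mathbb{Z}/2)$ and, in the sector $w_{1}=0$ where $\hat W\cong M\oplus M^{-1}$, the integer $d=\deg M$ are locally constant invariants. Hence for each admissible $d$ the locus $\mathsf{\mathcal{M}}_{K^{p}}^{d}(\text{SO}(1,2))\times\bigoplus_{j=1}^{p-1}H^{0}(X,K^{2j})$ is open and closed in the domain of $\Psi$; since $\Psi$ is an isomorphism onto an open and closed subvariety of $\mathsf{\mathcal{M}}(\text{SO}(p,p+1))\cong\mathsf{\mathcal{R}}(\text{SO}(p,p+1))$, its image $\mathsf{\mathcal{R}}_{d}(\text{SO}(p,p+1))$ is open and closed in the character variety. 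It remains to show that, for $1\le d\le p(2g-2)-1$, this image is connected, smooth, and free of representations with compact Zariski closure.

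Third, because $\Psi$ is an isomorphism onto its image, connectedness and smoothness may be checked on the domain, and the factor $\bigoplus_{j}H^{0}(X,K^{2j})$ --- a complex vector space --- contributes nothing to either. Writing $\hat W=M\oplus M^{-1}$ with the hyperbolic form, so that $I=\wedge^{2}\hat W\cong\mathcal{O}$ and $\hat\eta=(\mu,\nu)$ with $\mu\in H^{0}(M^{-1}K^{p})$ and $\nu\in H^{0}(MK^{p})$, I would run the stability analysis of $(I,\hat W,\hat\eta)$ along the lines of Lemma~4.4 of \cite{Aparicio et al}: the only positive-degree isotropic subbundle of $I\oplus\hat W$ is $M$ (of degree $d>0$), it is $\hat\eta$-invariant precisely when $\mu=0$, and no isotropic $\hat\eta$-invariant subbundle of degree zero exists; hence (poly)stability forces $\mu\ne0$, and then the pair is automatically \emph{stable} with $\nu$ unconstrained. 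This exhibits $\mathsf{\mathcal{M}}_{K^{p}}^{d}(\text{SO}(1,2))$, via $(M,\mu,\nu)\mapsto(\text{effective divisor of }\mu,\nu)$, as a vector-bundle-type fibration over the Brill--Noether locus $\{N\in\text{Pic}^{\,p(2g-2)-d}(X):H^{0}(N)\ne0\}$, which for $d\le p(2g-2)-1$ is the irreducible --- hence connected --- image of $\text{Sym}^{\,p(2g-2)-d}X$ in $\text{Pic}^{\,p(2g-2)-d}(X)$; connectedness of $\mathsf{\mathcal{R}}_{d}(\text{SO}(p,p+1))$ follows. For smoothness I would verify that each such stable Higgs bundle has the minimal automorphism group and that its deformation complex has vanishing $H^{0}$ and $H^{2}$ (no infinitesimal automorphisms beyond the centre, no obstructions), so that the moduli point --- and hence $\mathsf{\mathcal{R}}_{d}(\text{SO}(p,p+1))$ --- is smooth of the expected dimension; this is exactly the place where the bound $d\le p(2g-2)-1$ is used, the endpoint $d=p(2g-2)$ forcing $M=K^{p}$ and a degenerate configuration that has to be handled separately (there it is the Hitchin-type component).

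Finally, the absence of representations with compact Zariski closure is immediate from the first two steps: by the dictionary it suffices to observe that every Higgs bundle in $\mathsf{\mathcal{R}}_{d}(\text{SO}(p,p+1))$ lies in the image of $\Psi$, hence has Higgs field of the shape~(\ref{202}), whose superdiagonal $1$'s persist for all $p\ge2$ independently of the data $(\hat\eta,q_{2},\ldots,q_{2p-2})$; so $\eta\not\equiv0$ throughout the component. The main technical obstacle I anticipate is the smoothness step --- establishing, via a careful examination of $\hat\eta$-invariant isotropic subbundles and of the induced first-order deformations, that stability together with a minimal stabilizer really does hold everywhere on the $d$-locus for $d\le p(2g-2)-1$ and fails at the endpoint --- whereas the connectedness input from the symmetric-product/Brill--Noether description is comparatively soft.
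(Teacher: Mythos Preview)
The paper does not contain its own proof of this theorem: it is stated with attribution to \cite{Collier} (Theorem 4.1 there) and is used as input for the subsequent construction, so there is no in-paper argument to compare your proposal against.

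That said, your outline is essentially the natural strategy and tracks the structure that the surrounding text of the paper already suggests: pass to Higgs bundles via Theorem~\ref{naHc}, use that $\Psi$ is an isomorphism onto an open and closed subvariety, analyze the $K^{p}$-twisted $\text{SO}(1,2)$-data $(M\oplus M^{-1},\hat\eta=(\mu,\nu))$ with $d=\deg M$, and read off the absence of compact Zariski closure from the nonvanishing superdiagonal entries in~(\ref{202}). The connectedness argument via the Abel--Jacobi image of $\text{Sym}^{p(2g-2)-d}X$ is the standard one in this circle of ideas. Your identification of the smoothness step as the main technical point is accurate: one must check that for $0<d\le p(2g-2)-1$ every polystable point is actually stable with trivial automorphism group (beyond the centre of $\text{SO}(p,p+1)$), so that the deformation complex has vanishing $H^{0}$ and, by Serre duality, $H^{2}$. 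One small correction: the condition excluding strictly polystable points with extra automorphisms is not only $\mu\ne 0$ but also $\mu\ne\lambda\nu$ for any scalar $\lambda$ (cf.\ the description in~(\ref{203})); you should incorporate this into the stability analysis, since the locus $\mu=\lambda\nu$ is exactly where the structure group reduces and automorphisms appear.
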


Since all points in these $p\left( 2g-2 \right)-1$ many components are smooth, all corresponding fundamental group representations are \emph{irreducible representations}. In fact, these representations are conjectured in \cite{Collier} to have Zariski dense image. For this reason we shall call these components \emph{exceptional} to distinguish them among the rest of the components of the character varieties  $\mathsf{\mathcal{R}}\left( \text{SO}\left( p,q \right) \right)$ that are not detected by the fixed points of the ${{\mathbb{C}}^{*}}$-action. 

\begin{definition}\label{exceptional_comp_def}
The connected components of the moduli space $\mathsf{\mathcal{M}}\left( \text{SO}\left( p,p+1 \right) \right)$, which are smooth, will be called the \textit{exceptional components} of the moduli space $\mathsf{\mathcal{M}}\left( \text{SO}\left( p,p+1 \right) \right)$.
\end{definition}

For each integer $0<d\le p\left( 2g-2 \right)-1$, the Higgs bundles $\left( V,W,\eta  \right)$ in the exceptional components are described by the map $\Psi $ from (\ref{201}) as follows:
	\[\left( V,{{Q}_{V}} \right)=\left( {{K}^{p-1}}\oplus {{K}^{p-3}}\oplus \ldots \oplus {{K}^{3-p}}\oplus {{K}^{1-p}},\left( \begin{matrix}
   {} & {} & 1  \\
   {} &  & {}  \\
   1 & {} & {}  \\
\end{matrix} \right) \right),\] 
\begin{equation}\label{203}
\left( W,{{Q}_{W}} \right)=\left( M\oplus {{K}^{p-2}}\oplus {{K}^{p-4}}\oplus \ldots \oplus {{K}^{4-p}}\oplus {{K}^{2-p}}\oplus {{M}^{-1}},\left( \begin{matrix}
   {} & {} & 1  \\
   {} &  & {}  \\
   1 & {} & {}  \\
\end{matrix} \right) \right),
\end{equation} 
\[\eta =\left( \begin{matrix}
   0 & 0 & \ldots  & 0 & \nu   \\
   1 & {{q}_{2}} & {{q}_{4}} & \cdots  & {{q}_{2p-2}}  \\
   0 & 1 & {{q}_{2}} & {} & {{q}_{2p-4}}  \\
   0 & 0 & \ddots  & {} & \vdots   \\
   \vdots & {} & \ddots  & 1 & q_{2}  \\
    0 & 0 & \ldots  & 0 & \mu \\
\end{matrix} \right):V\to W\otimes K,\]
for $M\in \text{Pi}{{\text{c}}^{d}}\left( X \right)$, and sections $\mu \in {{H}^{0}}\left( {{M}^{-1}}{{K}^{p}} \right)\backslash \left\{ 0 \right\}$  and $\nu \in {{H}^{0}}\left( M{{K}^{p}} \right)$ with $0\ne \mu \ne \lambda \nu $. 

In the case when  $d=p\left( 2g-2 \right)$, then  $\left( V,W,\eta  \right)$ lies in the \emph{Hitchin component}\index{Hitchin component} of  $\mathsf{\mathcal{M}}\left( \text{SO}\left( p,p+1 \right) \right)$ with data

\[\left( V,{{Q}_{V}} \right)=\left( {{K}^{p-1}}\oplus {{K}^{p-3}}\oplus \ldots \oplus {{K}^{3-p}}\oplus {{K}^{1-p}},\left( \begin{matrix}
   {} & {} & 1  \\
   {} &  & {}  \\
   1 & {} & {}  \\
\end{matrix} \right) \right),\] 
\[\left( W,{{Q}_{W}} \right)=\left( {{K}^{p}}\oplus {{K}^{p-2}}\oplus {{K}^{p-4}}\oplus \ldots \oplus {{K}^{4-p}}\oplus {{K}^{2-p}}\oplus {{K}^{-p}},\left( \begin{matrix}
   {} & {} & 1  \\
   {} &  & {}  \\
   1 & {} & {}  \\
\end{matrix} \right) \right),\] 
\[\eta =\left( \begin{matrix}
   {{q}_{2}} & {{q}_{4}} & \ldots  & {{q}_{2p-2}} & {{q}_{2p}}  \\
   1 & {{q}_{2}} & {{q}_{4}} & \cdots  & {{q}_{2p-2}}  \\
   0 & 1 & {{q}_{2}} & {} & {{q}_{2p-4}}  \\
   0 & 0 & \ddots  & {} & \vdots   \\
   \vdots & {} & \ddots  & 1 & q_{2}  \\
    0 & 0 & \ldots  & 0 & 1 \\
\end{matrix} \right):V\to W\otimes K.\]

\subsection{Hitchin equations for orthogonal groups}
The moduli space $\mathsf{\mathcal{M}}\left( \text{SO}\left( p,q \right) \right)$ of polystable $\text{SO}\left( p,q \right)$-Higgs bundles is alternatively viewed as the moduli space of polystable pairs  $\left( {{{\bar{\partial }}}_{E}},\Phi  \right)$ modulo the gauge group  $\mathsf{\mathcal{G}}\left( \textbf{E} \right)$, where ${{\bar{\partial }}_{E}}$ is a Dolbeault operator on a principal  $\text{SO}\left( p,\mathbb{C} \right)\times \text{SO}\left( q,\mathbb{C} \right)$-bundle $\textbf{E}$ and  $\Phi \in {{\Omega }^{1,0}}\left( E\left( {{\mathfrak{m}}^{\mathbb{C}}} \right) \right)$ satisfying  ${{\bar{\partial }}_{E}}\left( \Phi  \right)=0$, for the $(-1)$-eigenspace  $\mathfrak{m}$ in the Cartan decomposition of the Lie algebra of the group  $\text{SO}\left( p,q \right)$. 

For the principal  $\text{SO}\left( p,\mathbb{C} \right)\times \text{SO}\left( q,\mathbb{C} \right)$-bundle $\textbf{E}$ equipped with a Dolbeault operator ${{\bar{\partial }}_{E}}$, the gauge group  
$$\mathsf{\mathcal{G}}\left( \textbf{E} \right)\cong {{\Omega }^{0}}\left( {{E}_{\text{SO}\left( p,\mathbb{C} \right)}}\left( \text{SO}\left( p,\mathbb{C} \right) \right) \right)\times {{\Omega }^{0}}\left( {{E}_{\text{SO}\left( q,\mathbb{C} \right)}}\left( \text{SO}\left( q,\mathbb{C} \right) \right) \right)$$
acts on the operators ${{\bar{\partial }}_{E}}$ by conjugation, where $\textbf{E}={{E}_{\text{SO}\left( p,\mathbb{C} \right)}}\times {{E}_{\text{SO}\left( q,\mathbb{C} \right)}}$. Now a Dolbeault operator on $\textbf{E}$ corresponds to a connection $A$ on the reduction $V$ of $\textbf{E}$ to $\text{SO}\left( p,\mathbb{C} \right)\times \text{SO}\left( q,\mathbb{C} \right)$ and consider a Higgs field $\Phi \in {{\Omega }^{1,0}}\left( V\left( {{\mathfrak{m}}^{\mathbb{C}}} \right) \right)$. 

The group $G=\text{SO}\left( p,q \right)$ is a real form of $\text{SO}\left( p+q,\mathbb{C} \right)$. It coincides with the compact real form when $p=q=0$ and with the split real form when $p=q$ for $p+q$ even, or when $q=p+1$ for $p+q$ odd. Matrix conjugation $\tau \left( X \right)=\bar{X}$ defines the compact real form; indeed, we check 
\begin{align*}
\mathfrak{so}\left( p+q \right) & = \left\{ X\in \mathfrak{so}\left( p+q,\mathbb{C} \right)\left| X=\bar{X} \right. \right\}\\
& = \left\{ X\in \mathfrak{so}\left( p+q,\mathbb{R} \right)\left| X+{{X}^{T}}=0 \right. \right\}.
\end{align*}
If we locally write $\Phi =\varphi dz$, then a calculation shows that  
$$\left[ \Phi ,\tau \left( \Phi  \right) \right]=\left( \begin{matrix}
   -\varphi {{\varphi }^{*}}-\bar{\varphi }{{\varphi }^{T}} & {}  \\
   {} & -{{\varphi }^{T}}\bar{\varphi }-{{\varphi }^{*}}\varphi   \\
\end{matrix} \right).$$
The Hitchin-Kobayashi correspondence for $G=\text{SO}\left( p,q \right)$ provides that if an $\text{SO}\left( p,q \right)$-Higgs bundle $\left( V, Q_{V}, W, Q_{W}, \eta \right)$ is polystable, then and only then the pair $\left( A, \Phi \right)$ as considered above satisfies the Hitchin equation\index{Hitchin equation} 
\[\left\{ \begin{matrix}
   {{F}_{A}}-\left[ \Phi ,\tau \left( \Phi  \right) \right]=0  \\
   {{{\bar{\partial }}}_{A}}\left( \Phi  \right)=0,  \\
\end{matrix} \right.\]
where $F_{A}$ denotes the curvature of the unique connection compatible with the structure group reduction and the holomorphic structure. For a local description of the connection $A=\left( {{A}_{1}},{{A}_{2}} \right)$ the equation ${{F}_{A}}-\left[ \Phi ,\tau \left( \Phi  \right) \right]=0$ becomes the pair 
\begin{align*}
& {{F}_{{{A}_{1}}}}+\varphi {{\varphi }^{*}}+\bar{\varphi }{{\varphi }^{T}}={{F}_{{{A}_{1}}}}+2\text{Re}\left( \varphi {{\varphi }^{*}} \right)=0\\ 
& {{F}_{{{A}_{2}}}}+{{\varphi }^{T}}\bar{\varphi }+{{\varphi }^{*}}\varphi ={{F}_{{{A}_{2}}}}+2\text{Re}\left( {{\varphi }^{T}}\bar{\varphi } \right)=0.
\end{align*}

\subsection{Model parabolic $\text{SL}\left( 2,\mathbb{R} \right)$-Higgs bundles} Parabolic $\text{SL}\left( 2,\mathbb{R} \right)$-Higgs bundles corresponding via the non-abelian Hodge correspondence to Fuchsian representations of the fundamental group of  a punctured surface into  the group $\text{PSL}\left( 2,\mathbb{R} \right)$ were first identified by Biswas, Ar\'{e}s-Gastesi and Govindarajan in \cite{BAG}; see also the article of Mondello \cite{Mondello} for a complete topological description of the relevant representation space. We next investigate these pairs more closely.

Let $D=\left\{ {{x}_{1}},\ldots ,{{x}_{s}} \right\}$ be a finite collection of $s$-many points on a closed genus $g$ Riemann surface $X$, such that $2g-2+s>0$. Let $K$ denote the canonical line bundle over the Riemann surface $X$. Consider the pair $\left( E,\Phi  \right)$, where:
\begin{enumerate}
  \item $E:={{\left( L\otimes \iota  \right)}^{*}}\oplus L$, \\
  where $L$ is a line bundle with ${{L}^{2}}={{K}}$ and $\iota :={{\mathsf{\mathcal{O}}}_{X}}\left( D \right)$ denotes the line bundle over the divisor $D$; we equip the bundle $E$ with a parabolic structure given by a trivial flag ${{E}_{{{x}_{i}}}}\supset \left\{ 0 \right\}$ and weight $\frac{1}{2}$ for every $1\le i\le s$.
  \item $\Phi :=\left( \begin{matrix}
   0 & 1  \\
   0 & 0  \\
\end{matrix} \right)\in {{H}^{0}}\left( X,\text{End}\left( E \right)\otimes K \otimes \iota \right)$.
\end{enumerate}
Then, the pair $\left( E,\Phi  \right)$ is a stable parabolic $\text{SL}\left( 2,\mathbb{R} \right)$-Higgs bundle with parabolic degree $\text{pardeg} (E)$ $= 0$. Therefore, from the non-abelian Hodge correspondence on non-compact curves \cite{Simpson-noncompact}, the vector bundle $E$ supports a tame harmonic metric; the local estimate for this Hermitian metric on $E$ restricted to the line bundle $L$ is
\[{{r}^{\frac{1}{2}}}{{\left| \log r \right|}^{\frac{1}{2}}},\] 
for $r=\left| z \right|$. Indeed, if $\beta \in \mathbb{R}$ denotes in general the weights in the filtration of the filtered local system $F$ corresponding to a parabolic Higgs bundle with weights $\alpha $, for $0\le \alpha <1$, then, if ${{W}_{k}}$ is the span of vectors of weights $\le k$, the weight filtration of  $\text{Re}{{\text{s}}_{x}}\left( F \right)$ describes the behavior of the tame harmonic map under the local estimate 
\[C{{r}^{\beta }}{{\left| \log r \right|}^{\frac{k}{2}}}.\]
In our case, the weight is  $\alpha =\frac{1}{2}=\beta $ and the residue at each point  ${{x}_{i}}\in D$ is $N=\left( \begin{matrix}
   0 & 1  \\
   0 & 0  \\
\end{matrix} \right)$, an upper triangular $2\times 2$ nilpotent matrix. Thus, its weight filtration is ${{W}_{-2}}=0$, ${{W}_{-1}}={{W}_{0}}=\operatorname{Im}\left( N \right)=\ker \left( N \right)$, and  ${{W}_{1}}=$ the whole space. Therefore, in the notation of Simpson from \cite{Simpson-noncompact} we have $L\subset {{W}_{1}}$ and $L\not\subset {{W}_{0}}={{W}_{-1}}$, while the Hermitian metric on the line bundle $L$ is locally 
\[{{r}^{\alpha }}{{\left| \log r \right|}^{\frac{k}{2}}}={{r}^{\frac{1}{2}}}{{\left| \log r \right|}^{\frac{1}{2}}}.\] 
For the parabolic dual ${{\left( L\otimes \iota  \right)}^{*}}$, the weight is by construction equal to $1-\frac{1}{2}$ and in the weight filtration for the residue it is  ${{\left( L\otimes \iota  \right)}^{*}}\subset {{W}_{-1}}$ and $L\not\subset {{W}_{1}}$. Thus, the Hermitian metric on ${{\left( L\otimes \iota  \right)}^{*}}$ is locally 
\[{{r}^{\alpha }}{{\left| \log r \right|}^{\frac{k}{2}}}={{r}^{1-\frac{1}{2}}}{{\left| \log r \right|}^{-\frac{1}{2}}}={{r}^{\frac{1}{2}}}{{\left| \log r \right|}^{-\frac{1}{2}}}.\] 
In conclusion, the metric on $\text{Hom}\left( L,{{\left( L\otimes \iota  \right)}^{*}} \right)$ is induced by the restricted tame harmonic metric of $E$ on the line bundles $L$ and ${{\left( L\otimes \iota  \right)}^{*}}$, as a section of ${{L}^{*}}\otimes {{\left( L\otimes \iota  \right)}^{*}}$ and is locally described by  
\[{{r}^{-\frac{1}{2}}}{{\left| \log r \right|}^{-\frac{1}{2}}}\cdot {{r}^{\frac{1}{2}}}{{\left| \log r \right|}^{-\frac{1}{2}}}={{\left| \log r \right|}^{-1}},\]
for $r=\left| z \right|$. Subsequently, the metric on the tangent bundle ${{L}^{-2}}$ is locally
\[{{r}^{-\frac{1}{2}}}{{\left| \log r \right|}^{-\frac{1}{2}}}\cdot {{r}^{-\frac{1}{2}}}{{\left| \log r \right|}^{-\frac{1}{2}}}={{r}^{-1}}{{\left| \log r \right|}^{-1}}\] 
and is therefore the Poincar\'{e} metric of the punctured disk on $\mathbb{C}$; we refer the interested reader to \cite{BAG} and \cite{Simpson-noncompact} for further information.

\subsection{Parabolic $\text{SO}\left( p,p+1 \right)$-models}

In this subsection we construct model parabolic $\text{SO}\left( p,p+1 \right)$-Higgs bundles which shall be later on used in providing the desired (non-parabolic) $\text{SO}\left( p,p+1 \right)$-models in the exceptional components over the complex connected sum of Riemann surfaces. Of critical importance to this construction are the parabolic  $\text{SL}\left( 2,\mathbb{R} \right)$-Higgs bundles $\left( E,\Phi  \right)$ of Biswas, Ar\'{e}s-Gastesi and Govindarajan from \cite{BAG} described earlier. As we have seen in \S \ref{local_model}, from the gauge theoretic viewpoint, a model solution to the $\text{SL}\left( 2,\mathbb{C} \right)$-Hitchin equations that corresponds to the polystable pair  $\left( E,\Phi  \right)$ is given by a pair $\left( {{A}^{\bmod }},{{\Phi }^{\bmod }} \right)$, where
\[{{A}^{\bmod }}=0,\text{   }{{\Phi }^{\bmod }}=\left( \begin{matrix}
   C & 0  \\
   0 & -C  \\
\end{matrix} \right)\frac{dz}{z}\]
over a punctured disk with $z$-coordinates around the puncture with the condition that $C\in \mathbb{R}$ with $C\ne 0$, and that the meromorphic quadratic differential $q:=\det {{\Phi }^{\bmod }}$ has at least one simple zero.

\subsubsection{Models via the irreducible representation $\text{SL}\left( 2,\mathbb{R} \right)\hookrightarrow \text{SO}\left( p,p+1 \right)$}\label{first_models}\label{models_via_irreducible_repr}
We next construct model parabolic $\text{SO}\left( p,p+1 \right)$-Higgs bundles lying inside the parabolic Teichm\"{u}ller component\index{parabolic Teichm\"{u}ller component}\index{Hitchin component!parabolic} for $\text{SO}\left( p,p+1 \right)$. The general construction of this component was carried out in \cite{KSZ}, while in the non-parabolic case, a detailed construction of models can be found in \cite{Aparicio}.

The connected component of the special orthogonal group containing the identity $\text{SO}_{0}\left( p,p+1 \right)$ is a split real form of $\text{SO}\left( 2p+1,\mathbb{C} \right)$. The Lie algebra of $\text{SO}\left( p,p+1 \right)$ is 
\begin{align*}
\mathfrak{so}\left( p,p+1 \right) & = \left\{ X\in \mathfrak{sl}\left( 2p+1,\mathbb{R} \right)\left| {{X}^{t}}{{I}_{p,p+1}}+{{I}_{p,p+1}}X \right.=0 \right\}\\
& = \left\{ \left( \begin{matrix}
   {{X}_{1}} & {{X}_{2}}  \\
   X_{2}^{t} & {{X}_{3}}  \\
\end{matrix} \right) \right.\left| {{X}_{1}},{{X}_{3}}\text{ real skew-sym}\text{. of rank }p,p+1\text{ resp}\text{.;} \right.\\
& \,\,\,\,\,\,\,\,\,\, \left. {{X}_{2}}\text{ real }\left( p\times \left( p+1 \right) \right)\text{-matrix} \right\}.
\end{align*}
The Lie algebra $\mathfrak{so}\left( p,p+1 \right)$ admits a Cartan decomposition $\mathfrak{so}\left( p,p+1 \right)=\mathfrak{h}\oplus \mathfrak{m}$ into its $\left( \pm 1 \right)$-eigenspaces, where 
\[\mathfrak{h}=\mathfrak{so}\left( p \right)\times \mathfrak{so}\left( p+1 \right)=\left\{ \left( \begin{matrix}
   {{X}_{1}} & 0  \\
   0 & {{X}_{3}}  \\
\end{matrix} \right)\left| {{X}_{1}}\in \mathfrak{so}\left( p \right),{{X}_{3}}\in \mathfrak{so}\left( p+1 \right) \right. \right\},\]
\[\mathfrak{m}=\left\{ \left( \begin{matrix}
   0 & {{X}_{2}}  \\
   X_{2}^{t} & 0  \\
\end{matrix} \right)\left| {{X}_{2}}\text{ real }\left( p\times \left( p+1 \right) \right)\text{-matrix} \right. \right\}.\]
The Cartan decomposition of the complex Lie algebra is
\[\mathfrak{so}( 2p+1, \mathbb{C} )= \left(\mathfrak{so}\left( p, \mathbb{C} \right)\times \mathfrak{so}\left( p+1, \mathbb{C} \right) \right) \oplus {\mathfrak{m}^{\mathbb{C}}}, \]
where 
\[{{\mathfrak{m}}^{\mathbb{C}}}=\left\{ \left( \begin{matrix}
   0 & {{X}_{2}}  \\
   -X_{2}^{t} & 0  \\
\end{matrix} \right)\left| {{X}_{2}}\text{ complex }\left( p\times \left( p+1 \right) \right)\text{-matrix} \right. \right\}.\] 
If $\mathfrak{c}$ is a Cartan subalgebra of $\mathfrak{so}\left( p,p+1 \right)$ and  $\Delta $ is the set of the corresponding roots, then the element  
	\[\sum\limits_{\alpha \in \Delta }{{{c}_{\alpha }}{{X}_{\alpha }}}\in \mathfrak{so}\left( 2p+1,\mathbb{C} \right),\] 
is regular nilpotent, for ${{c}_{\alpha }}\ne 0$, $\alpha \in \Pi $ and ${{X}_{\alpha }}$ a root vector for  $\alpha $, where
\[{{\Delta }^{+}}=\left\{ {{e}_{i}}\pm {{e}_{j}},\text{ with }1\le i<j\le p \right\}\cup \left\{ {{e}_{i}},1\le i\le p \right\},\] 
\[\Pi =\left\{ {{a}_{i}}={{e}_{i}}-{{e}_{i+1}},\,\,1\le i\le p-1 \right\}\cup \left\{ {{a}_{p}}={{e}_{p}} \right\}.\] 
The corresponding root vectors are 

${{X}_{{{e}_{i}}-{{e}_{j}}}} = {{E}_{i,j}}-{{E}_{p+j,p+i}}$

${{X}_{{{e}_{i}}+{{e}_{j}}}} = {{E}_{i,p+j}}-{{E}_{j,p+i}}$

${{X}_{{{e}_{i}}}} = {{E}_{i,2p+1}}-{{E}_{2p+1,p+i}}$

${{X}_{-{{e}_{i}}}} = {{E}_{p+i,2p+1}}-{{E}_{2p+1,i}}$.

Now, let  $x:=\sum\limits_{i=1}^{p}{2\left( p+1-i \right)\left( {{E}_{i,i}}-{{E}_{p+i,p+i}} \right)}$ and take $e:=\sum\limits_{a\in \Pi }{{{X}_{a}}}$. From this choice it is then satisfied that  $\left[ x,e \right]=2e$, for the semisimple element $x$ and the regular nilpotent element $e$. Moreover, the conditions  $\left[ x,\tilde{e} \right]=-2\tilde{e}$ and $\left[ e,\tilde{e} \right]=x$ determine another nilpotent element  $\tilde{e}$, thus the triple  $\left\langle x,e,\tilde{e} \right\rangle \cong \mathfrak{sl}\left( 2,\mathbb{C} \right)$ defines a principal 3-dimensional Lie subalgebra of $\mathfrak{so}\left( p,p+1 \right)$. 

The adjoint action $\left\langle x,e,\tilde{e} \right\rangle \cong \mathfrak{so}\left( 2,\mathbb{C} \right)\to \text{End}\left( \mathfrak{so}\left( 2p+1,\mathbb{C} \right) \right)$ of this subalgebra decomposes $\mathfrak{so}\left( p,p+1 \right)$ as a direct sum of irreducible representations
\[\mathrm{}\left( 2p+1,\mathbb{C} \right)=\underset{i=1}{\overset{p}{\mathop{\oplus }}}\,{{V}_{i}},\] 
with $\dim{{V}_{i}}=4i-1$, for $1\le i\le p$. Therefore,  ${{V}_{i}}={{S}^{4i-2}}{{\mathbb{C}}^{2}}$,   $1\le i\le p$ with eigenvalues $4i-2$, $4i-4$,..., $-4i+4$, $-4i+2$ for the action of $\text{ad}x$, and the highest weight vectors are  ${{e}_{1}},\ldots ,{{e}_{p}}$, where  ${{e}_{i}}$ has eigenvalue  $4i-2$, for $1\le i\le p$. 

Considering the representation
\[\mathfrak{sl}\left( 2,\mathbb{C} \right)\to \mathfrak{so}\left( 2p+1,\mathbb{C} \right),\] 
for $\mathfrak{so}\left( 2p+1,\mathbb{C} \right)={{S}^{2}}{{\mathbb{C}}^{2}}+{{S}^{6}}{{\mathbb{C}}^{2}}+\ldots +{{S}^{4p-2}}{{\mathbb{C}}^{2}}={{\Lambda }^{2}}\left( {{S}^{2p}}{{\mathbb{C}}^{2}} \right)$, we may next deduce the defining data  $\left( {{E}_{1}},{{\Phi }_{1}} \right)$ for a parabolic  $\text{SO}\left( p,p+1 \right)$-Higgs bundle inside the parabolic Teichm\"{u}ller component for the split real form  ${{G}^{r}}=\text{S}{{\text{O}}_{0}}\left( p,p+1 \right)$. The parabolic vector bundle is obtained from the  $\left( 2p \right)$-th symmetric power of the parabolic  $\text{SL}\left( 2,\mathbb{R} \right)$-bundle in the Teichm\"{u}ller component, as follows.

Let ${{X}_{1}}$ be a compact Riemann surface of genus  ${{g}_{1}}$,   ${{D}_{1}}=\left\{ {{p}_{1}},\ldots ,{{p}_{s}} \right\}$ a collection of $s$ distinct points on  ${{X}_{1}}$ and let ${{L}_{1}}\to {{X}_{1}}$ with  $L_{1}^{2}\cong {{K}_{{{X}_{1}}}}$  and ${{\iota }_{1}}={{\mathsf{\mathcal{O}}}_{{{X}_{1}}}}\left( {{D}_{1}} \right)$. Consider the parabolic vector bundle ${{\left( {{L}_{1}}\otimes {{\iota }_{1}} \right)}^{*}}\oplus {{L}_{1}}$ over $\left( {{X}_{1}},{{D}_{1}} \right)$, equipped with a trivial flag and weight $\frac{1}{2}$. Then, the vector bundle  ${{E}_{1}}$ of a model parabolic  $\text{SO}\left( p,p+1 \right)$-Higgs bundle in the parabolic Teichm\"{u}ller component is 
\begin{align*}
{{E}_{1}}: & = {{S}^{2p}}\left( {{\left( {{L}_{1}}\otimes {{\iota }_{1}} \right)}^{*}}\oplus {{L}_{1}} \right)\\
& = L_{1}^{-2p}\otimes \mathsf{\mathcal{O}}\left( -p{{D}_{1}} \right)\oplus L_{1}^{-2p+2}\otimes \mathsf{\mathcal{O}}\left( \left( 1-p \right){{D}_{1}} \right)\oplus \ldots \\
& \ldots \oplus L_{1}^{2p-2}\otimes \mathsf{\mathcal{O}}\left( \left( p-1 \right){{D}_{1}} \right)\oplus L_{1}^{2p}\otimes \mathsf{\mathcal{O}}\left( p{{D}_{1}} \right)\\
& = K_{1}^{-p}\otimes \mathsf{\mathcal{O}}\left( -p{{D}_{1}} \right)\oplus K_{1}^{-\left( p-1 \right)}\otimes \mathsf{\mathcal{O}}\left( \left( 1-p \right){{D}_{1}} \right)\oplus \ldots \\
& \ldots \oplus K_{1}^{p-1}\otimes \mathsf{\mathcal{O}}\left( \left( p-1 \right){{D}_{1}} \right)\oplus K_{1}^{p}\otimes \mathsf{\mathcal{O}}\left( p{{D}_{1}} \right),
\end{align*}
equipped with a trivial parabolic flag and weight 0. 

\begin{remark}
Note that in the above description we have included the consideration for the parabolic structure in a symmetric power of a parabolic bundle. In fact, restricting attention on the first original term ${{\left( {{L}_{1}}\otimes {{\iota }_{1}} \right)}^{*}}$ with weight $\frac{1}{2}$, the symmetric power ${{S}^{2p}}\left( {{\left( {{L}_{1}}\otimes {{\iota }_{1}} \right)}^{*}} \right)$ is the line bundle $L_{1}^{-2p}\otimes \mathsf{\mathcal{O}}\left( -2p{{D}_{1}} \right)$ with weight  $2p\cdot \frac{1}{2}=p$. However, we obtain a well-defined parabolic bundle by reducing the weight to a number within the interval  $\left[ 0,1 \right)$, this means, by tensoring $L_{1}^{-2p}\otimes \mathsf{\mathcal{O}}\left( -2p{{D}_{1}} \right)$ by $\mathsf{\mathcal{O}}\left( p{{D}_{1}} \right)$. We thus get  $K_{1}^{-p}\otimes \mathsf{\mathcal{O}}\left( -p{{D}_{1}} \right)$ with weight 0, as appears in the first term of the parabolic bundle ${{E}_{1}}$ above. 
\end{remark}

The Higgs field in the parabolic  $\text{SO}\left( p,p+1 \right)$-Teichm\"{u}ller component is given by  
\[\tilde{e}+{{q}_{1}}{{e}_{1}}+\ldots +{{q}_{p}}{{e}_{p}},\] 
for $\left( {{q}_{1}},\ldots ,{{q}_{p}} \right)\in \underset{i=1}{\overset{p}{\mathop{\oplus }}}\,{{H}^{0}}\left( K_{1}^{2i}\otimes \iota _{1}^{2i-1} \right)$ and  ${{e}_{1}},\ldots ,{{e}_{p}}$ are the highest weight vectors. From the set of simple roots of  $\mathfrak{so}\left( p,p+1 \right)$,   
\[\Pi =\left\{ {{e}_{i}}-{{e}_{i+1}},\,1\le i\le p-1 \right\}\cup \left\{ {{e}_{p}} \right\},\] 
we obtain the 3-dimensional subalgebra $\left\langle x,e,\tilde{e} \right\rangle \cong \mathfrak{sl}\left( 2,\mathbb{C} \right)\hookrightarrow \mathfrak{so}\left( p,p+1 \right)$, with

\begin{equation}\label{302}
x=
  \left(\begin{array}{@{}cccc|cccc@{}|c@{}}
    2p &  &  &  &  &  &  &  & \\
      & 2(p-1) &  &  &  &  &  &  &  \\
     &  & \ddots  &  &  &  &  &  &  \\
       &  &  & 2 &  &  &  &  &  \\\hline
       &  &  &  & -2p &  &  &  &  \\
   &  &  &  &  & -2(p-1) &  &  &  \\
     &  &  &  &  &  & \ddots &  &  \\
       &  &  &  &  &  &  & -2 &  \\\hline
       &  &  &  &  &  &  &  &  0
  \end{array}\right), 
\end{equation}

\begin{equation}\label{regular nilpotent}
e=
  \left(\begin{array}{@{}cccc|cccc@{}|c@{}}
   0  & 1 &  &  &  &  &  &  & \\
      & \ddots & \ddots &  &  &  &  &  &  \\
     &  &  & 1 &  &  &  &  &  \\
       &  &  & 0 &  &  &  &  & 1 \\\hline
       &  &  &  & 0  &  &  &  &  \\
   &  &  &  & -1 & \ddots &  &  &  \\
     &  &  &  &  & \ddots &  &  &  \\
       &  &  &  &  &  & -1 & 0 &  \\\hline
       &  &  &  &  &  &  & -1 & 0
  \end{array}\right),
\end{equation}
the semisimple and regular nilpotent element respectively; from these we may also determine the third element in the principal 3-dimensional subalgebra of $\mathfrak{so}(p, p+1)$:
\begin{equation}\label{e-tilda}
\tilde{e}=
  \left(\begin{array}{@{}cc|cc@{}|c@{}}

    & A &  &  &  \\

     \hline
     &  & B &  & D \\
 
  \hline
    & C & &  & 
  \end{array}\right),
\end{equation}
where
\[A = \left( \begin{matrix}
   0  &  &  &   \\
    2p  & \ddots &  &   \\
     & 2p+2(p-1) &  &   \\
     & \ddots &  &   \\
       &  & 2p+2(p-1) + \dots + 2 \cdot 2 & 0 
\end{matrix} \right)\]   is a $p \times p$ block with zeros on the main diagonal,
\[B = \left( \begin{matrix}
  0  & -2p &  &  &  \\
    & \ddots &-2p-2(p-1)  &  &  \\
      &  &  & \ddots &\\
 &  & &   & -2p-2(p-1) - \dots - 2 \cdot 2   \\
    &  &  &  & 0
\end{matrix} \right)\]  is a $p \times p$ block with zeros on the main diagonal, and
\[C = \left( \begin{matrix}
   0  & \cdots & 0 &   2p+2(p-1) + \dots + 2  
\end{matrix} \right) \text{ is a } 1 \times p \text{ block}, \]
\[D = \left( \begin{matrix}
   0  \\
   \vdots \\
   0 \\
   -2p-2(p-1) - \dots - 2  
\end{matrix} \right) \text{ is a } p \times 1 \text{ block}. \]

From the analysis above we deduce that a model parabolic Higgs pair lying inside the parabolic $\text{S}{{\text{O}}_{0}}\left( p,p+1 \right)$-Hitchin component which is a local minimum of the Hitchin functional, when viewed as an $\text{SL}\left( 2p+1,\mathbb{C} \right)$-pair, is a pair $\left( {{E}_{1}},{{\Phi }_{1}} \right)$ with 
\begin{itemize}
\item ${{E}_{1}}=K_{1}^{-p}\otimes \mathsf{\mathcal{O}}\left( -p{{D}_{1}} \right)\oplus K_{1}^{-\left( p-1 \right)}\otimes \mathsf{\mathcal{O}}\left( \left( 1-p \right){{D}_{1}} \right)\oplus \ldots \oplus K_{1}^{\left( p-1 \right)}\otimes \mathsf{\mathcal{O}}\left( \left( p-1 \right){{D}_{1}} \right)\oplus K_{1}^{p}\otimes \mathsf{\mathcal{O}}\left( p{{D}_{1}} \right)$ \\
a parabolic vector bundle of rank $2p+1$ over  $\left( {{X}_{1}},{{D}_{1}} \right)$ equipped with a parabolic structure given by a trivial flag and weight 0, 
\item ${{\Phi }_{1}}=\left( \begin{matrix}
   0 & 1 & 0 & \cdots & & 0 \\
   0 & 0 & 1 & 0 & \cdots &  0 \\
   \vdots  & {} & {} & {} & {} & {}  \\
   0 & \cdots  &  & & 0 & 1  \\
\end{matrix} \right):{{E}_{1}}\to {{E}_{1}}\otimes {{K}_{1}}\otimes {{\iota }_{1}}$ \\
as a $p\times \left( p+1 \right)$-matrix.
\end{itemize}

The next lemma is analogous to Lemma 2.1 in \cite{BAG}.

\begin{lemma}
 The parabolic Higgs bundle $\left( {{E}_{1}},{{\Phi }_{1}} \right)$ above is a parabolic stable Higgs bundle of parabolic degree zero.
\end{lemma}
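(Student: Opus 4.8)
The plan is to verify parabolic stability directly by examining all $\Phi_1$-invariant parabolic subbundles $F \le E_1$ and checking that $\operatorname{par}\mu(F) < \operatorname{par}\mu(E_1) = 0$. First I would record that $\operatorname{par}\deg(E_1) = 0$: the underlying bundle is $E_1 = \bigoplus_{j=-p}^{p} K_1^{j}\otimes\mathcal{O}(j D_1)$ (with the grading convention in the excerpt), and since each summand carries weight $0$ and $\deg\bigl(K_1^{j}\otimes\mathcal{O}(jD_1)\bigr) = j(2g_1-2) + js = j(2g_1-2+s)$, the degrees cancel in pairs $\pm j$; hence $\operatorname{par}\deg(E_1)=0$ and $\operatorname{par}\mu(E_1)=0$. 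Because the parabolic weights are all zero here, parabolic degree coincides with ordinary degree for every subbundle, so the parabolic stability condition reduces to the ordinary slope inequality $\deg(F) < 0$ for every nonzero proper $\Phi_1$-invariant subbundle $F$.

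Next I would analyze the $\Phi_1$-invariant subbundles. The key structural fact is that $\Phi_1$ is (the $\operatorname{SL}(2p+1,\mathbb{C})$-form of) the principal nilpotent $\tilde e$ — equivalently, via the identification with $S^{2p}\mathbb{C}^2$, it is the lowering operator in the unique irreducible $\operatorname{SL}(2,\mathbb{C})$-representation of dimension $2p+1$. A nonzero $\Phi_1$-invariant subbundle $F$ must, fiberwise, be a subspace stable under a regular nilpotent endomorphism; the only such subspaces of $\mathbb{C}^{2p+1}$ are the members of the canonical flag determined by the nilpotent. Globally this forces $F$ to be one of the "tail" subbundles $F_k := \bigoplus_{j=-p}^{-p+k-1} K_1^{j}\otimes\mathcal{O}(jD_1)$ (the image of $\Phi_1^{2p+1-k}$, i.e. the subbundle generated by the lowest-weight lines), for $1 \le k \le 2p$. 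This is exactly the parabolic analogue of the step in Lemma 2.1 of \cite{BAG}, and it is the structural heart of the argument: one must argue that holomorphicity plus $\Phi_1$-invariance rules out any "twisted" or non-saturated invariant subsheaf, so that the only candidates destabilizing $E_1$ are these explicit direct sums of negatively-graded line bundles.

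Finally I would compute $\deg(F_k) = \sum_{j=-p}^{-p+k-1} j(2g_1-2+s)$. Since $2g_1 - 2 + s > 0$ by hypothesis and every index $j$ in the range satisfies $j \le -1 < 0$, this sum is strictly negative; hence $\operatorname{par}\mu(F_k) = \deg(F_k)/k < 0 = \operatorname{par}\mu(E_1)$ for every $k$, and stability follows. I would also note the direct-sum decomposition of $E_1$ as parabolic bundles is into line subbundles of which only the negatively-graded ones (and their sums) are $\Phi_1$-invariant, so there is no polystable-but-not-stable degeneration here; strict inequality is genuine. The main obstacle I anticipate is the second step — rigorously classifying the $\Phi_1$-invariant parabolic subbundles — since one must be careful that a priori an invariant subsheaf need not be a direct sum of the coordinate line bundles and need not be saturated; the cleanest route is to invoke the cyclic structure of the principal $\mathfrak{sl}(2,\mathbb{C})$-module (any vector outside $\ker\Phi_1^{m}$ generates the whole space under iterated $\Phi_1$ up to the appropriate twist), which pins down the invariant subbundles to the flag $\{F_k\}$ exactly as in \cite{BAG}.
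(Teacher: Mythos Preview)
Your approach is essentially identical to the paper's: compute $\operatorname{par\deg}(E_1)=0$, list the $\Phi_1$-invariant proper subbundles as the ``tail'' flags $F_k=\bigoplus_{j=-p}^{-p+k-1}K_1^{j}\otimes\mathcal{O}(jD_1)$ for $1\le k\le 2p$, and check that each has negative parabolic degree. The paper states this in two lines and does not elaborate on the classification of invariant subbundles; your more careful discussion of that step (via the cyclic structure of the principal $\mathfrak{sl}(2,\mathbb{C})$-module, as in \cite{BAG}) is a welcome addition of rigor rather than a different route.

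There is, however, a genuine arithmetic slip in your final step. You assert that ``every index $j$ in the range satisfies $j\le -1<0$'', but this is false once $k>p$: for instance when $k=2p$ the sum runs over $j=-p,\ldots,p-1$, which includes nonnegative indices. The correct computation is
\[
\sum_{j=-p}^{-p+k-1} j \;=\; -kp+\frac{k(k-1)}{2}\;=\;\frac{k(k-1-2p)}{2},
\]
and since $1\le k\le 2p$ forces $k-1-2p\le -1<0$, the sum is indeed strictly negative; multiplying by $2g_1-2+s>0$ then gives $\deg(F_k)<0$. So the conclusion stands, but the justification needs to be replaced by this explicit evaluation (or by the observation that the sum of any initial segment of the arithmetic progression $-p,-p+1,\ldots,p$ omitting the top term is negative because the positive and negative parts only balance when the full range $-p,\ldots,p$ is taken).
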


\begin{proof}
The proof that $\text{pardeg}\left( {{E}_{1}} \right) =0$ is immediate, following the properties of the parabolic degree on a direct sum and the dual of a parabolic bundle. The ${{\Phi }_{1}}$-invariant proper subbundles of  ${{E}_{1}}$ are of the form
	\[K_{1}^{-p}\otimes \mathsf{\mathcal{O}}\left( -p{{D}_{1}} \right)\oplus K_{1}^{-\left( p-1 \right)}\otimes \mathsf{\mathcal{O}}\left( -\left( p-1 \right){{D}_{1}} \right)\oplus \ldots \oplus K_{1}^{m-p}\otimes \mathsf{\mathcal{O}}\left( \left( m-p \right){{D}_{1}} \right),\] 
for $0\le m\le 2p-1$. One now checks that these all have negative parabolic degree, that is, strictly less than  $\text{pardeg}\left( {{E}_{1}} \right)$.
\end{proof}
Therefore, from the punctured-surface version of the non-abelian Hodge correspondence \cite{Simpson-noncompact}, there is a tame harmonic metric on the vector bundle ${{E}_{1}}$. Let ${{A}_{1}}$ denote the associated Chern connection. Parabolic stability implies the existence of a complex gauge transformation, unique up to modification by a unitary gauge, such that  $\left( {{A}_{1}},{{\Phi }_{1}} \right)$ solves the Hitchin equations. 

In a suitably chosen local holomorphic trivialization of  ${{E}_{1}}$, the pair  $\left( {{A}_{1}},{{\Phi }_{1}} \right)$ is asymptotic to a model solution, which after a unitary change of frame can be written locally over a punctured neighborhood around a point ${{p}_{i}}\in {{D}_{1}}$ as  
	\[A_{1}^{\bmod }=0,\,\,\,\Phi _{1}^{\bmod }=Cx\frac{dz}{z},\] 
where $x$ denotes the semisimple element from (\ref{302}) and $z$ the local coordinate around the point ${{p}_{i}}\in {{D}_{1}}$. 

\subsubsection{Models via the general map $\Psi$}\label{second_models}

Let ${{X}_{2}}$ be a compact Riemann surface of genus ${{g}_{2}}$ and ${{D}_{2}}=\left\{ {{q}_{1}},\ldots ,{{q}_{s}} \right\}$ a collection of $s$ points on  ${{X}_{2}}$. Let ${{\iota }_{2}}={{\mathsf{\mathcal{O}}}_{{{X}_{2}}}}\left( {{D}_{2}} \right)$. 
The second family of model parabolic $\text{SO}\left( p,p+1 \right)$-Higgs bundles is obtained via the more general map 
	\[{{\Psi }^\textit{par}}:\mathsf{\mathcal{M}}_{K_{2}^{p}\otimes \iota _{2}^{p-1}}^\textit{par}\left( \text{SO}\left( 1,2 \right) \right)\times \underset{j=1}{\overset{p-1}{\mathop{\oplus }}}\,{{H}^{0}}\left( {{X}_{2}},K_{2}^{2j}\otimes \iota _{2}^{2j-1} \right)\to {{\mathsf{\mathcal{M}}}^\textit{par}}\left( \text{SO}\left( p,p+1 \right) \right)\]
defined as in (\ref{201}), but considering also the relevant parabolic structures. Take $\left( I,\hat{W},\hat{\eta } \right)\in \mathsf{\mathcal{M}}_{K_{2}^{p}\otimes \iota _{2}^{p-1}}^\textit{par}\left( \text{SO}\left( 1,2 \right) \right)$, the moduli space of  $K_{2}^{p}$-twisted parabolic  $\text{SO}\left( 1,2 \right)$-Higgs bundles, for
\begin{itemize}
\item  $\hat{W}:=\tilde{M}\oplus {{\tilde{M}}^{\vee }}$, for  $\tilde{M}\cong \mathsf{\mathcal{O}}\left( \left( 2k-1-p \right){{D}_{2}} \right)$ with $k=1,\ldots ,p$ an integer; 
\item $I:=\wedge _\textit{par}^{2}\hat{W}\cong \wedge \tilde{M}\otimes \wedge {{\tilde{M}}^{\vee }}\cong \tilde{M}\otimes {{\tilde{M}}^{\vee }}\cong \mathsf{\mathcal{O}}$;
\item $\hat{\eta }=0$.
\end{itemize} 
Then, one gets by the definition of the map ${{\Psi }^\textit{par}}$ the triple ${{\Psi }^\textit{par}}\left( \left( I,\hat{W},\hat{\eta } \right),\left( 0,\ldots ,0 \right) \right)=:\left( V,W,\eta  \right)$, where
\begin{itemize}
\item $V=K_{2}^{p-1}\otimes \mathsf{\mathcal{O}}\left( \left( p-1 \right){{D}_{2}} \right)\oplus \ldots \oplus K_{2}^{1-p}\otimes \mathsf{\mathcal{O}}\left( \left( 1-p \right){{D}_{2}} \right)$; 
\item $W=\tilde{M}\oplus {{\tilde{M}}^{\vee }}\oplus K_{2}^{p-2}\otimes \mathsf{\mathcal{O}}\left( \left( p-2 \right){{D}_{2}} \right)\oplus \ldots \oplus K_{2}^{2-p}\otimes \mathsf{\mathcal{O}}\left( \left( 2-p \right){{D}_{2}} \right)$; 
\item $\eta =\left( \begin{matrix}
   \hat{\eta }=0 & 0 & \cdots & & 0  \\
   0 & 1 & 0 & \cdots & 0  \\
   \vdots  & {} & {} & {}  & \\
   0 & \cdots  & {} & 0 & 1 \\
\end{matrix} \right)$.
\end{itemize}
From the description of the Higgs bundle data we see that since  $\hat{\eta }=0$, the triple  $\left( V,W,\eta  \right)$ reduces to an  $\text{SO}\left( p,p-1 \right)\times \text{SO}\left( 2 \right)$-Higgs bundle whose  $\text{SO}\left( p,p-1 \right)$-factor lies in the parabolic Hitchin component. We rather define this as an  $\text{SL}\left( 2p+1,\mathbb{C} \right)$-pair $\left( {{E}_{2}},{{\Phi }_{2}} \right)$, where
\begin{itemize}
\item ${{E}_{2}}=V\oplus W=\tilde{M}\oplus {{\tilde{M}}^{\vee }}\oplus K_{2}^{-\left( p-1 \right)}\otimes \mathsf{\mathcal{O}}\left( \left( 1-p \right){{D}_{2}} \right)\oplus \ldots \oplus K_{2}^{p-1}\otimes \mathsf{\mathcal{O}}\left( \left( p-1 \right){{D}_{2}} \right)$;
\item ${{\Phi }_{2}}=\left( \begin{matrix}
   0 & 0 & 0 & \cdots & & &  & 0   \\
   0 & 0 & 0 & \cdots & & & & 0  \\
   0 & 0 & 0 & 1 & 0 & \cdots & & 0 \\
   \vdots  & \vdots  & \vdots  & 0 & 1 & 0 & \cdots & 0  \\
&  &  &  & & \ddots &  & \\
    &  &  &  & & &  & 1\\
   0 & 0 & 0 & \cdots & & &  & 0
\end{matrix} \right):{{E}_{2}}\to {{E}_{2}}\otimes {{K}_{2}}\otimes {{\iota }_{2}}$.
\end{itemize}
The  ${{\Phi }_{2}}$-invariant proper subbundles of ${{E}_{2}}$ are 
\begin{align*}
 &  \tilde{M}\oplus {{{\tilde{M}}}^{\vee }}\oplus K_{2}^{-\left( p-1 \right)}\otimes \mathsf{\mathcal{O}}\left( \left( 1-p \right){{D}_{2}} \right)  \\
  & \tilde{M}\oplus {{{\tilde{M}}}^{\vee }}\oplus K_{2}^{-\left( p-1 \right)}\otimes \mathsf{\mathcal{O}}\left( \left( 1-p \right){{D}_{2}} \right)\oplus K_{2}^{-\left( p-2 \right)}\otimes \mathsf{\mathcal{O}}\left( \left( 2-p \right){{D}_{2}} \right)  \\
   & \vdots   \\
   & \tilde{M}\oplus {{{\tilde{M}}}^{\vee }}\oplus K_{2}^{-\left( p-1 \right)}\otimes \mathsf{\mathcal{O}}\left( \left( 1-p \right){{D}_{2}} \right)\oplus \ldots \oplus K_{2}^{\left( p-2 \right)}\otimes \mathsf{\mathcal{O}}\left( \left( p-2 \right){{D}_{2}} \right),  \\
\end{align*} or, in general, these are of the form 
$$\tilde{M}\oplus {{\tilde{M}}^{\vee }}\oplus K_{2}^{-\left( p-1 \right)}\otimes \mathsf{\mathcal{O}}\left( \left( 1-p \right){{D}_{2}} \right)\oplus \ldots \oplus K_{2}^{\left( l-p \right)}\otimes \mathsf{\mathcal{O}}\left( \left( l-p \right){{D}_{2}} \right),$$ for each $1\le l\le 2p-2$. As in the previous lemma, one sees that all proper  ${{\Phi }_{2}}$-invariant subbundles of  ${{E}_{2}}$ have negative parabolic degree, while $\text{pardeg}\left( {{E}_{2}} \right)=0$. Therefore, the models  $\left( {{E}_{2}},{{\Phi }_{2}} \right)$ for every  $k=1,\ldots ,p$ are all parabolic stable. For  ${{A}_{2}}$ be the Chern connection with respect to a tame harmonic metric on  ${{E}_{2}}$, in a suitably chosen local holomorphic trivialization  of ${{E}_{2}}$, the pair $\left( {{A}_{2}},{{\Phi }_{2}} \right)$ is, after conjugation by a unitary gauge, asymptotic to a model solution which locally over a punctured neighborhood around a point  ${{q}_{j}}\in {{D}_{2}}$ is written as  
\[A_{2}^{\bmod }=0,\,\,\Phi _{2}^{\bmod }=\left( \begin{matrix}
   0 & {} & {} & {} & {} & {} & {} & {} & {}  \\
   {} & 0 & {} & {} & {} & {} & {} & {} & {} \\
   {} & {} & 2\left( p-1 \right)C & {} & {} & {} & {} & {} & {} \\
   {} & {} & {} & \ddots  & {} & {} & {} & {} & {}  \\
       & {} & {} & {} & 2C & {} & {} & {} & {}  \\
       & {} & {} & {} & {} & -2(p-1)C & {} & {} & {}  \\
       & {} & {} & {} & {} & {} & \ddots & {} & {}  \\
       & {} & {} & {} & {} & {} & {} & -2C & {}  \\
       & {} & {} & {} & {} & {} & {} & {} & 0  
\end{matrix} \right)\frac{dw}{w},\] 
for coordinates $w$ around each puncture ${{q}_{j}}\in {{D}_{2}}$.

\subsection{Gauge-theoretic gluing of parabolic $\text{SO}\left( p,p+1 \right)$-Higgs bundles}

We have described parabolic  $\text{SO}\left( p,p+1 \right)$-models $\left( E_{i},\Phi_{i}  \right)$, $i=1,2$, which are parabolic stable. Model solutions to the Hitchin equations over punctured disks corresponding to the pairs $\left( E_{i},\Phi_{i}  \right)$ are respectively of the form
\begin{align*}
& A_{1}^{\bmod }=0,\,\,\,\Phi _{1}^{\bmod }=\left( \begin{matrix}
   2pC & {} & {} & {} & {} & {} & {} & {} & {}  \\
   {} & 2(p-1)C & {} & {} & {} & {} & {} & {} & {} \\
   {} & {} & \ddots & & {} & {} & {} & {} & {} \\
   {} & {} & {} &  2C & {} & {} & {} & {} & {}  \\
       & {} & {} & {} & -2pC & {} & {} & {} & {}  \\
       & {} & {} & {} & {} & \ddots & {} & {} & {}  \\
       & {} & {} & {} & {} & {} &  & {} & {}  \\
       & {} & {} & {} & {} & {} & {} & -2C & {}  \\
       & {} & {} & {} & {} & {} & {} & {} & 0  
\end{matrix} \right)\frac{dz}{z},\\
& A_{2}^{\bmod }=0,\,\,\Phi _{2}^{\bmod }=\left( \begin{matrix}
   0 & {} & {} & {} & {} & {} & {} & {} & {}  \\
   {} & 0 & {} & {} & {} & {} & {} & {} & {} \\
   {} & {} & 2\left( p-1 \right)C & {} & {} & {} & {} & {} & {} \\
   {} & {} & {} & \ddots  & {} & {} & {} & {} & {}  \\
       & {} & {} & {} & 2C & {} & {} & {} & {}  \\
       & {} & {} & {} & {} & -2(p-1)C & {} & {} & {}  \\
       & {} & {} & {} & {} & {} & \ddots & {} & {}  \\
       & {} & {} & {} & {} & {} & {} & -2C & {}  \\
       & {} & {} & {} & {} & {} & {} & {} & 0  
\end{matrix} \right)\frac{dw}{w}.
\end{align*}

In order to glue the above parabolic $\text{SO}\left( p,p+1 \right)$-Higgs bundles over the complex connected sum of Riemann surfaces ${{X}_{\#}}:={{X}_{1}}\#{{X}_{2}}$ of genus $g={{g}_{1}}+{{g}_{2}}+s-1$ we shall use the gauge-theoretic gluing construction summarized in \S \ref{general_gluing}. To this end, the initial model data  $\left( A_{i}^{\bmod },\Phi _{i}^{\bmod } \right)$ should be identified locally over the annuli around the points in the divisors of $s$-many points ${{D}_{i}}$, for $i=1,2$. This is achieved using the perturbation argument described next. 

Consider the embedding  
\[{{\Psi }^\textit{par}_{i}}:\mathsf{\mathcal{M}}_{K_{i}^{p}\otimes \iota _{i}^{p-1}}^\textit{par}\left( \text{SO}\left( 1,2 \right) \right)\times \underset{j=1}{\overset{p-1}{\mathop{\oplus }}}\,{{H}^{0}}\left( {{X}_{i}},K_{i}^{2j}\otimes \iota _{i}^{2j-1} \right)\to {{\mathsf{\mathcal{M}}}^\textit{par}}\left( \text{SO}\left( p,p+1 \right) \right),\] for $i=1, 2$.
Over the pair $\left( {{X}_{1}},{{D}_{1}} \right)$, take a parabolic $\text{SO}\left( 1,2 \right)$-Higgs bundle defined by the triple  $\left(\hat{W}_{1},  I_{1}, \hat{\eta }_{1} \right)$ with 
\begin{align*}
& \hat{W}_{1}:=K_{1}^{\vee }\oplus {{K}_{1}};\\
& I_{1}\cong \mathsf{\mathcal{O}};\\
& \hat{\eta }_{1}=\left( \begin{matrix}
   0 & 1 & 0  \\
   0 & 0 & 1  \\
   0 & 0 & 0  \\
\end{matrix} \right).
\end{align*}
Let  $\left( {{{\tilde{A}}}_{1}},{{{\tilde{\Phi }}}_{1}} \right)$ be the corresponding solution to the Hitchin equations. There is a complex gauge transformation which locally puts  $\left( {{{\tilde{A}}}_{1}},{{{\tilde{\Phi }}}_{1}} \right)$ into the model form 
\[\tilde{A}_{1}^{\bmod }=0,\,\,\tilde{\Phi }_{1}^{\bmod }=\left( \begin{matrix}
   2C & 0 & 0  \\
   0 & -2C & 0  \\
   0 & 0 & 0  \\
\end{matrix} \right)\frac{dz}{z}\] 
over a disk centered at the points in ${{D}_{1}}$ with coordinates $z$. 

\begin{remark}
The existence of this complex gauge transformation is provided for the local $\text{SL}\left( 2,\mathbb{R} \right)$-model solution  $\left( {{A}^{\bmod }}=0,\,\,{{\Phi }^{\bmod }}=\left( \begin{matrix}
   C & 0  \\
   0 & -C  \\
\end{matrix} \right)\frac{dz}{z} \right)$; then we embed into $\text{SO}\left( 1,2 \right)$. 
\end{remark} 
 
Using the map $\Psi _{1}^\textit{par}$ above, the parabolic stable $\text{SO}\left( p,p+1 \right)$-pair $\left( {{E}_{1}},{{\Phi }_{1}} \right)$ over $\left( {{X}_{1}},{{D}_{1}} \right)$ corresponds to an \emph{approximate solution}\index{Hitchin equation!approximate solution} $\left( {{A}_{1}},{{\Phi }_{1}} \right)$ of the Hitchin equations, which near each point of   ${{D}_{1}}$ has the form $\left( A_{1}^{\bmod },\Phi _{1}^{\bmod } \right)$ with 
\[ A_{1}^{\bmod }=0,\,\,\,\Phi _{1}^{\bmod }=\left( \begin{matrix}
   2pC & {} & {} & {} & {} & {} & {} & {} & {}  \\
   {} & 2(p-1)C & {} & {} & {} & {} & {} & {} & {} \\
   {} & {} & \ddots & & {} & {} & {} & {} & {} \\
   {} & {} & {} &  2C & {} & {} & {} & {} & {}  \\
       & {} & {} & {} & -2pC & {} & {} & {} & {}  \\
       & {} & {} & {} & {} & \ddots & {} & {} & {}  \\
       & {} & {} & {} & {} & {} &  & {} & {}  \\
       & {} & {} & {} & {} & {} & {} & -2C & {}  \\
       & {} & {} & {} & {} & {} & {} & {} & 0  
\end{matrix} \right)\frac{dz}{z},\]
for $p>2$ and $C \in \mathbb{R}$ nonzero.

Over the pair $\left( {{X}_{2}},{{D}_{2}} \right)$, take the triple  $\left(\hat{W}_{2},  I_{2}, \hat{\eta }_{2} \right)$ with 
\begin{align*}
& \hat{W}_{2}:=\tilde{M}\oplus {{\tilde{M}}^{\vee }}, \text{ where } \tilde{M}\cong \mathsf{\mathcal{O}}\left( \left( 2k-1-p \right){{D}_{2}} \right),\text{ for } k=1,\ldots ,p\\
& I_{2}\cong \mathsf{\mathcal{O}}\\
& \hat{\eta }_{2} \in {{H}^{0}}\left( \text{Hom}\left( {{{\hat{W}}}_{2}},{{I}_{2}} \right)\otimes K_{2}^{p}\otimes \iota _{2}^{p-1} \right).
\end{align*}
Applying a similar argument as above, we may perturb the relevant  $\text{SL}\left( 2,\mathbb{R} \right)$-pair and extend our data to $\text{SO}\left( p,p+1 \right)$ to finally get an \emph{approximate} solution  $\left( {{A}_{2}},{{\Phi }_{2}} \right)$, which near each point of ${{D}_{2}}$ has the form  $\left( A_{2}^{\bmod },\Phi _{2}^{\bmod } \right)$ with 
\[ A_{2}^{\bmod }=0,\,\,\,\Phi _{2}^{\bmod }=\left( \begin{matrix}
   -2pC & {} & {} & {} & {} & {} & {} & {} & {}  \\
   {} & -2(p-1)C & {} & {} & {} & {} & {} & {} & {} \\
   {} & {} & \ddots & & {} & {} & {} & {} & {} \\
   {} & {} & {} & -2C & {} & {} & {} & {} & {}  \\
       & {} & {} & {} & 2pC & {} & {} & {} & {}  \\
       & {} & {} & {} & {} & \ddots & {} & {} & {}  \\
       & {} & {} & {} & {} & {} &  & {} & {}  \\
       & {} & {} & {} & {} & {} & {} & 2C & {}  \\
       & {} & {} & {} & {} & {} & {} & {} & 0  
\end{matrix} \right)\frac{dw}{w},\]
for $p>2$ and $C \in \mathbb{R}$ nonzero, as above.

The complex connected sum of Riemann surfaces  ${{X}_{\#}}={{X}_{1}}\#{{X}_{2}}$ is realized along the curve $zw=\lambda $ for a parameter $\lambda \in \mathbb{C}$, and so $\frac{dz}{z}=-\frac{dw}{w}$ for coordinates on annuli around each puncture which are glued using a biholomorphism for each pair of points $\left( p_{i},q_{j} \right)$ from the divisors  ${{D}_{1}}$ and ${{D}_{2}}$. Let $\Omega \subset {{X}_{\#}}$ denote the result of gluing these pairs of annuli  and set $\left( A_{p_{i},q_{j}}^{\bmod },\Phi _{p_{i},q_{j}}^{\bmod } \right):=\left( A_{1}^{\bmod },\Phi _{1}^{\bmod } \right)=-\left( A_{2}^{\bmod },\Phi _{2}^{\bmod } \right)$. We can glue the pairs $\left( {{A}_{1}},{{\Phi }_{1}} \right)$, $\left( {{A}_{2}},{{\Phi }_{2}} \right)$ together to get an \emph{approximate solution} of the $\text{SO}\left( p,p+1 \right)$-Hitchin equations:

\[\left( A^\textit{app},\Phi ^\textit{app} \right):=\left\{ \begin{matrix}
   \left( {{A}_{1}},{{\Phi }_{1}} \right), & {} & \text{over }{{X}_{1}}\backslash {{X}_{2}}  \\
   \left( A_{p_{i},q_{j}}^{\bmod },\Phi _{p_{i},q_{j}}^{\bmod } \right), & {} & \text{         over } \Omega \text{ around each pair of points } \left( p_{i},q_{j} \right)  \\
   \left( {{A}_{2}},{{\Phi }_{2}} \right) & {} & \text{over }{{X}_{2}}\backslash {{X}_{1}},  \\
\end{matrix} \right.\]
over the connected sum bundle over ${{X}_{\#}}$. 

By construction, $\left( A^\textit{app},\Phi^\textit{app} \right)$ is a smooth pair on ${{X}_{\#}}$, complex gauge equivalent to an exact solution of the Hitchin equations by a smooth gauge transformation defined over all of ${{X}_{\#}}$. The next step is to correct the approximate solution $\left( A^\textit{app},\Phi^\textit{app} \right)$ to an exact solution of the $\text{SO}\left( p,p+1 \right)$-Hitchin equations. We follow the contraction mapping argument for the nonlinear $G$-Hitchin operator from \S \ref{contraction_mapping}-\ref{section_linearization} developed for a general connected semisimple Lie group $G$. We next describe how the general theory for showing that the linearization operator is invertible adapts to the case when $G = \text{SO}\left( p,p+1 \right)$; the computation of the necessary analytic estimates for an approximate solution does not depend on the the semisimple Lie group $G$ and can be found in \S 6 of \cite{Kydon-article}.

For the group $G = \text{SO}(p, p+1)$, a maximal compact subgroup is $H = \text{SO}(p, \mathbb{C}) \times \text{SO}(p+1, \mathbb{C})$ with Lie algebra $\mathfrak{h} = \mathfrak{so}(p) \times \mathfrak{so}(p+1)$. Moreover, for a Higgs field $\Phi = \varphi dz$, the compact real form $\tau : \mathfrak{g}^{\mathbb{C}} \to \mathfrak{g}^{\mathbb{C}}$ is giving $\tau (\Phi) = \bar{\varphi} dz$. For the notation introduced in \S \ref{section_linearization}, we have the following:

\begin{lemma}\label{lemma on D}
 For an element $\left( {{\psi }_{1}},{{\psi }_{2}} \right)\in \ker \left( {{L}_{1}}+L_{2}^{*} \right)\cap L_{\text{ext}}^{2}\left( X_{\#}^{\times } \right)$, we have 
\[{{d}_{A_{0}^\textit{app}}}{{\psi }_{i}}=\left[ {{\psi }_{i}},\Phi _{0}^\textit{app} \right]=\left[ {{\psi }_{i}},{{\left( \Phi _{0}^\textit{app} \right)}^{*}} \right]=0,\] 
for $i=1,2$.
\end{lemma}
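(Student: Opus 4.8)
The plan is to reduce the statement to the fact, recorded just above, that as $R\searrow 0$ the approximate pair $(A_R^\textit{app},\Phi_R^\textit{app})$ degenerates to an \emph{exact} solution of the Hitchin equations on the nodal surface $X_\#^\times$. On each piece $X_i\setminus\Omega$ the limit equals $(A_i,\Phi_i)$, which solves the $\mathrm{SO}(p,p+1)$-Hitchin equations, and on the infinitely extended cylinder it equals the model pair $A^{\bmod}=0$, $\Phi^{\bmod}$ (a constant diagonal matrix times $\frac{dz}{z}$), which is itself an exact solution since $F_{A^{\bmod}}=0$ and the constant, real, diagonal coefficient matrix of $\Phi^{\bmod}$ commutes with its conjugate, so $[\Phi^{\bmod},\tau(\Phi^{\bmod})]=0$. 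Hence $F_{A_0^\textit{app}}-[\Phi_0^\textit{app},\tau(\Phi_0^\textit{app})]=0$ and $\bar\partial_{A_0^\textit{app}}\Phi_0^\textit{app}=0$ everywhere on $X_\#^\times$, and the composition $L_2L_1=\big[\,F_{A_0^\textit{app}}-[\Phi_0^\textit{app},\tau(\Phi_0^\textit{app})],\ \cdot\,\big]$ vanishes: the elliptic complex of \S\ref{section_linearization} becomes a genuine complex in the limit.

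Using $L_2L_1=0$, I would then show that an element $(\psi_1,\psi_2)\in\ker(L_1+L_2^{*})$ lying in $L^2_{\mathrm{ext}}(X_\#^\times)$ is annihilated separately by $L_1$ and $L_2^{*}$: pairing the kernel equation with itself, the cross term is $\langle L_1(\,\cdot\,),L_2^{*}(\,\cdot\,)\rangle_{L^2}=\langle L_2L_1(\,\cdot\,),\,\cdot\,\rangle_{L^2}=0$ after an integration by parts that is legitimate precisely because the extended-$L^2$ condition forces the required decay along the ends of the neck, so $\|L_1(\,\cdot\,)\|^2+\|L_2^{*}(\,\cdot\,)\|^2=0$. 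One is left with $L_1(\psi_1,\psi_2)=0$, i.e.
\[d_{A_0^\textit{app}}(\psi_1,\psi_2)=0,\qquad [\Phi_0^\textit{app},(\psi_1,\psi_2)]=0.\]
Decomposing along the Cartan factor $\mathfrak{h}=\mathfrak{so}(p)\oplus\mathfrak{so}(p+1)$, which the block-diagonal connection $A_0^\textit{app}$ preserves, gives $d_{A_0^\textit{app}}\psi_i=0$; inspecting the off-diagonal block shape of the model Higgs field against the block-diagonal $(\psi_1,\psi_2)$ gives $[\psi_i,\Phi_0^\textit{app}]=0$ for $i=1,2$; and since these $\psi_i$ are valued in the compact real form $\mathfrak{h}$, applying the compact conjugation $\tau$ (for which $\tau(\psi_i)=\psi_i$ and $\tau(\Phi_0^\textit{app})=(\Phi_0^\textit{app})^{*}$) turns $[\psi_i,\Phi_0^\textit{app}]=0$ into $[\psi_i,(\Phi_0^\textit{app})^{*}]=0$. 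This is the desired conclusion, the natural nodal-surface analogue of Lemma~\ref{linearization_lemma}.

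The main obstacle I expect is analytic rather than algebraic: one must control $(\psi_1,\psi_2)$ on the infinitely long cylinder of $X_\#^\times$ well enough to carry out the integration by parts and the orthogonal splitting above with no boundary contributions. This is exactly the information supplied by working in the extended $L^2$-space together with the asymptotic description of the model deformation operator on the neck in the Cappell--Lee--Miller framework; once that decay is in hand, the algebra above is routine. A minor secondary point is to check that $[\psi_i,\Phi_0^\textit{app}]=0$ genuinely decouples into the two factors for the specific models of \S\ref{first_models}--\S\ref{second_models}, which follows from the explicit diagonal form of $\Phi_0^\textit{app}$ on the neck and the normal forms of $\Phi_1,\Phi_2$ away from it.
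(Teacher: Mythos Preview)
Your reading of what the pair $(\psi_1,\psi_2)$ denotes is off, and this derails the rest of the argument. In the elliptic complex of \S\ref{section_linearization} the operator $L_1+L_2^{*}$ is the even-to-odd half of the Dirac-type operator $\mathfrak{D}_T$, so an element of its kernel lives in $\Omega^{0}\big(E_H(\mathfrak{h}^{\mathbb C})\big)\oplus\big(\Omega^{2}(E_H(\mathfrak{h}^{\mathbb C}))\oplus\Omega^{2}(E_H(\mathfrak{g}^{\mathbb C}))\big)$: the indices $1,2$ refer to the form-degree slots (after the usual Hodge-star identification $\Omega^{2}\cong\Omega^{0}$ on the cylinder), not to the factors of $\mathfrak{h}=\mathfrak{so}(p)\oplus\mathfrak{so}(p+1)$. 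Consequently ``$L_1(\psi_1,\psi_2)$'' is not meaningful, and decomposing along the Cartan blocks does not produce the two conditions for $i=1,2$. Your Hodge-type splitting $\|L_1\psi_1\|^{2}+\|L_2^{*}\psi_2\|^{2}=0$ via $L_2L_1=0$ is the right instinct and does give $L_1\psi_1=0$, hence $d_{A_0^{\textit{app}}}\psi_1=0$ and $[\psi_1,\Phi_0^{\textit{app}}]=0$; but the remaining condition $[\psi_1,(\Phi_0^{\textit{app}})^{*}]=0$ does \emph{not} follow by ``applying $\tau$'', because $\psi_1$ is $\mathfrak{h}^{\mathbb C}$-valued and in general $\tau(\psi_1)\neq\psi_1$. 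And for $\psi_2$ you only obtain $L_2^{*}\psi_2=0$, which is a different set of equations from the three you need.

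The paper closes these gaps by a different mechanism: it passes to the asymptotic cross-section operator on the infinite cylinder,
\[
D(\psi_1,\psi_2)=2\begin{pmatrix}\tfrac{i}{2}\partial_\theta\psi_1+[\psi_2,\tau(\varphi)]\\[2pt]-\tfrac{i}{2}\partial_\theta\psi_2-[\psi_1,\varphi]\end{pmatrix},
\]
and uses that the model Higgs field $\varphi^{\bmod}$ is \emph{diagonal with distinct real entries} to conclude that no nontrivial off-diagonal matrix components can lie in $\ker D$ (this is the content of Lemma~6.5 in \cite{Kydon-article}, to which the proof defers). From that algebraic fact one reads off simultaneously the vanishing of $d_{A_0^{\textit{app}}}\psi_i$, $[\psi_i,\Phi_0^{\textit{app}}]$ and $[\psi_i,(\Phi_0^{\textit{app}})^{*}]$ for both $i$. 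Your exactness-of-the-complex observation is correct and relevant as background, but it does not by itself replace this diagonal-Higgs-field computation; in particular, the step $[\psi_i,\Phi]=0\Rightarrow[\psi_i,\Phi^{*}]=0$ genuinely needs the specific form of $\varphi^{\bmod}$.
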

\begin{proof}
The proof follows exactly the same steps as that of Lemma 6.5 in \cite{Kydon-article}. There are no nontrivial off-diagonal elements in the kernel of the operator 
\[D\left( {{\psi }_{1}},{{\psi }_{2}} \right):=2\left( \begin{matrix}
   {\frac{i}{2}{{\partial }_{\theta }}{{\psi }_{1}}+\left[ {{\psi }_{2}},\tau \left( \varphi  \right) \right]}  \\
   {-\frac{i}{2}{{\partial }_{\theta }}{{\psi }_{2}}-\left[ {{\psi }_{1}},\varphi  \right]}  \\
\end{matrix} \right),
\]
since we have taken a diagonal and traceless model Higgs field  $\Phi^{\bmod }=\varphi^{\bmod }\frac{dz}{z}$, where
\[\varphi^{\bmod }=\left( \begin{matrix}
   2pC & {} & {} & {} & {} & {} & {} & {} & {}  \\
   {} & 2(p-1)C & {} & {} & {} & {} & {} & {} & {} \\
   {} & {} & \ddots & & {} & {} & {} & {} & {} \\
   {} & {} & {} &  2C & {} & {} & {} & {} & {}  \\
       & {} & {} & {} & -2pC & {} & {} & {} & {}  \\
       & {} & {} & {} & {} & \ddots & {} & {} & {}  \\
       & {} & {} & {} & {} & {} &  & {} & {}  \\
       & {} & {} & {} & {} & {} & {} & -2C & {}  \\
       & {} & {} & {} & {} & {} & {} & {} & 0  
\end{matrix} \right),\]
with $p>2$ and $C \in \mathbb{R}$ nonzero.
\end{proof}
The next proposition is now providing invertibility:
\begin{proposition}
The operator ${{L}_{1}}+L_{2}^{*}$ considered as a densely defined operator on $L_{\text{ext}}^{2}\left( X_{\#}^{\times } \right)$ has trivial kernel.
\end{proposition}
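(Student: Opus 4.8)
The plan is to combine the pointwise vanishing from Lemma~\ref{lemma on D} with a unique-continuation argument across the neck region. First I would take an arbitrary $\gamma = (\psi_1,\psi_2) \in \ker(L_1 + L_2^*) \cap L^2_{\text{ext}}(X_\#^\times)$. By Lemma~\ref{lemma on D} we already know that on the nodal limit surface $X_\#^\times$ (equivalently, on the two pieces $X_1 \setminus \Omega$ and $X_2 \setminus \Omega$ away from the infinite cylinder), the section satisfies $d_{A_0^\textit{app}}\psi_i = [\psi_i, \Phi_0^\textit{app}] = [\psi_i, (\Phi_0^\textit{app})^*] = 0$ for $i=1,2$. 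So each $\psi_i$ is a covariantly constant section of $E_{H}(\mathfrak{h}^{\mathbb{C}})$ over the respective piece that additionally commutes with the (semisimple, diagonal, traceless) model Higgs field and its adjoint.

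Next I would analyze the commutant condition. Because $\varphi^{\bmod}$ is the explicit diagonal matrix with distinct nonzero eigenvalues $2pC, 2(p-1)C, \ldots, 2C, -2pC, \ldots, -2C$ together with a single $0$ on the diagonal (using $p > 2$ and $C \neq 0$ so that the nonzero eigenvalues are genuinely distinct), a matrix commuting with both $\varphi^{\bmod}$ and $(\varphi^{\bmod})^* = \overline{\varphi^{\bmod}}$ must be diagonal; intersecting with $\mathfrak{h} = \mathfrak{so}(p) \times \mathfrak{so}(p+1)$, whose elements are skew-symmetric, forces such a diagonal element to be zero. This kills the ``off-diagonal-free'' components over the neck, where the pair has been put into the model form. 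The remaining task is to propagate this vanishing back onto $X_1 \setminus X_2$ and $X_2 \setminus X_1$: I would use that $\psi_i$ is $d_{A_0^\textit{app}}$-parallel, hence determined on a connected open set by its value at any one point; since $\psi_i$ vanishes on the open annular neighborhood $\Omega$ (which meets both pieces), and $X_1 \setminus X_2$, $X_2 \setminus X_1$ are connected, parallel transport forces $\psi_i \equiv 0$ everywhere. Finally, invoking Lemma~\ref{linearization_lemma} (or its underlying Weitzenb\"ock identity) one concludes $\ker(L_1 + L_2^*) = 0$ as a densely defined operator on $L^2_{\text{ext}}(X_\#^\times)$, noting that the extended-$L^2$ framework controls the behavior on the infinite cylindrical end so that no nonzero parallel section can hide there.

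The main obstacle I expect is the passage from the nodal/extended picture to genuine vanishing on each compact piece: one must be careful that a $d_{A_0^\textit{app}}$-parallel section which is $L^2$ on the infinitely-extended cylinder cannot be nontrivial there — this is exactly where the choice of the extended $L^2$-space $L^2_{\text{ext}}(X_\#^\times)$ and the asymptotics of the model connection $A^{\bmod}_{p,q} = 0$ enter, and it is the analytic heart of the Cappell--Lee--Miller-type argument. Concretely, on the cylinder a parallel section has constant norm, so $L^2$-finiteness over a half-infinite cylinder forces it to vanish there, and then unique continuation does the rest. The only genuinely new input relative to the $\text{Sp}(4,\mathbb{R})$ case treated in \cite{Kydon-article} is the linear-algebra computation of the commutant of $\varphi^{\bmod}$ inside $\mathfrak{so}(p) \times \mathfrak{so}(p+1)$, which is precisely what Lemma~\ref{lemma on D} was set up to handle; with that in place the rest of the proof is formally identical to the proof of the corresponding statement in \cite{Kydon-article}, so I would state it and then refer to loc.\ cit.\ for the repeated analytic estimates.
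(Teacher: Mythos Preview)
Your approach differs from the paper's and, as written, has a gap. You conflate the connected sum $X_\#$ (which carries the approximate solution and has a finite neck $\Omega$ on which the pair equals the model) with the nodal surface $X_\#^\times$ on which the operator $L_1 + L_2^*$ actually lives. On $X_\#^\times$ the pair is $(A_0^{\textit{app}}, \Phi_0^{\textit{app}})$, the genuine exact solution on each piece $X_i \setminus \Omega$, and this equals the model $(A^{\bmod}, \Phi^{\bmod})$ only \emph{asymptotically} along the infinite cylindrical ends---there is no open set of $X_\#^\times$ on which $\Phi_0^{\textit{app}}$ is literally the diagonal matrix $\varphi^{\bmod}$. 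So your commutant computation with $\varphi^{\bmod}$ does not directly apply at any finite point. Your fallback, that a parallel section of constant norm on a half-infinite cylinder must vanish by $L^2$-finiteness, is also incorrect here: the extended space $L^2_{\text{ext}}(X_\#^\times)$ is precisely designed to contain sections with nonzero asymptotic limits, so a nonzero constant-norm section is admissible there.

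The paper's argument is different and avoids the asymptotic end entirely. After obtaining $d_{A_0^{\textit{app}}}\gamma = 0$ and $[\gamma,\varphi] = [\gamma,\varphi^*] = 0$ (for $\gamma = \psi_1 + \psi_1^* \in \Omega^0(E_H(\mathfrak{h}))$), one has $|\gamma|$ constant and, if $\gamma \neq 0$, that $\varphi(x)$ must act unitarily at every $x$. But on the $X_1$-side the Higgs field comes from the parabolic Hitchin section via the irreducible embedding $\phi_{\textit{irr}}$, and $\det \Phi_0^{\textit{app}}$ generically has a simple zero at some interior point $x_0 \in X_1$; at such a point $\varphi(x_0) = \tilde{e} + z e$ with $e,\tilde{e}$ the nilpotent elements of the principal $\mathfrak{sl}_2$-triple from (\ref{regular nilpotent})--(\ref{e-tilda}), which is visibly not unitary. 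This contradiction forces $\gamma \equiv 0$. The key input you are missing is exactly this use of a zero of the determinant at an interior point---it is the assumption singled out in the remark immediately following the paper's proof. Your commutant-at-the-end idea can be salvaged (combine constant norm with the fact that the \emph{asymptotic limit} of $\psi_i$ commutes with $\varphi^{\bmod}$ and is therefore diagonal, hence zero in $\mathfrak{so}(p,\mathbb{C})\times\mathfrak{so}(p+1,\mathbb{C})$), but as you have written it the argument does not close.
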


\begin{proof}
Let $\left( {{\psi }_{1}},{{\psi }_{2}} \right)\in \ker \left( {{L}_{1}}+L_{2}^{*} \right)\cap L_{\text{ext}}^{2}\left( X_{\#}^{\times } \right)$. From Lemma \ref{lemma on D} we have
\[{{d}_{A_{0}^\textit{app}}}{{\psi }_{i}}=\left[ {{\psi }_{i}},\Phi _{0}^\textit{app} \right]=\left[ {{\psi }_{i}},{{\left( \Phi _{0}^\textit{app} \right)}^{*}} \right]=0,\] for $i=1,2$. We show that ${{\psi }_{1}}=0$ by showing separately that $\gamma :={{\psi }_{1}}+\psi _{1}^{*}\in {{\Omega }^{0}}\left( X_{\#}^{\times }, E \left( \mathfrak{h} \right) \right)$ and $\delta :=i\left( {{\psi }_{1}}-\psi _{1}^{*} \right)\in {{\Omega }^{0}}\left( X_{\#}^{\times }, E \left( \mathfrak{h} \right) \right)$ both vanish.\\
For holomorphic coordinate $z$ centered at the node of $X_{\#}^{\times }$, the Higgs field $\Phi _{0}^\textit{app}$ in our exact solution is written $\Phi _{0}^\textit{app}=\varphi \frac{dz}{z}$
with 
$$\varphi \in {{\mathfrak{m}}^{\mathbb{C}}}\left( \text{SO}\left( p,p+1 \right) \right)=\left\{ \left( \begin{matrix}
   0 & Q  \\
   -{{Q}^{T}} & 0  \\
\end{matrix} \right), \text{ for } M \text{ a } p\times \left( p+1 \right)\text{ complex matrix} \right\}.$$
We get that $d{{\left| \gamma  \right|}^{2}}=2\left\langle {{d}_{A_{0}^{app}}}\gamma ,\gamma  \right\rangle =0$, that is, $\left| \gamma  \right|$ is constant on $X_{\#}^{\times }$, as well as that $\gamma( x)$ lies in the kernel of the linearization operator (see Lemma \ref{linearization_lemma}). Moreover, 
\[\gamma \left( x \right)\in \mathfrak{h}=\left\{ \left( \begin{matrix}
   M & 0  \\
   0 & N  \\
\end{matrix} \right)\left| M\in \mathfrak{so}\left( p \right),N\in \mathfrak{so}\left( p+1 \right) \right. \right\},\] 
thus $\gamma(x)$ has orthogonal eigenvectors for distinct eigenvalues, but even if there are degenerate eigenvalues, it is still possible to find an orthonormal basis consisting of eigenvectors of $\gamma \left( x \right)$.\\
Now, if $\gamma \left( x \right)$ is non-zero, since $\left[ \varphi \left( x \right),\gamma \left( x \right) \right]=0$ it follows that $\varphi \left( x \right)$ preserves the eigenspaces of $\gamma \left( x \right)$ for all $x\in X_{\#}^{\times }$ and so $\left\langle \varphi \left( x \right)v,\varphi \left( x \right)w \right\rangle =\left\langle v,w \right\rangle $ for $v,w\in {{\mathbb{C}}^{2p+1}}$. In other words, $\varphi \left( x \right)$ ought to be an isometry with respect to the usual norm in ${{\mathbb{C}}^{2p+1}}$. Equivalently, $\varphi \left( x \right)$ is unitary for all $x\in X_{\#}^{\times }$. The determinant of the Higgs field $\det \Phi_{0}^\textit{app}$ generically has a simple zero in at least one point in $X_{\#}^{\times }$. For a zero $x_{0}$ chosen, say, on the left hand side surface $X_1$ of $X_{\#}^{\times }$ where we embed via the irreducible representation described in \S \ref{models_via_irreducible_repr} --- let us denote it at present $\phi_\textit{irr}$ --- we see that
	\[\varphi \left( {{x}_{0}} \right)={{\phi }_{\textit{irr}^{*}}}\left( \begin{matrix}
   0 & z  \\
   1 & 0  \\
\end{matrix} \right) = \tilde{e} +ze \]
which is not unitary, for the matrices $e$ and $\tilde{e}$ as in (\ref{regular nilpotent}) and (\ref{e-tilda}) respectively. This is a contradiction and therefore, $\gamma=0$ everywhere.\\
That $\delta$ vanishes, as well as that ${{\psi }_{2}}=0$, is proven entirely similarly.
\end{proof}

\begin{remark}
The assumption of the existence of at least one simple zero of a generic meromorphic quadratic differential allows us to show that the linear operator $L_{(A, \Phi)}$ is injective and thus assure absence of small eigenvalues of this linear operator governing the gluing construction (cf. Swoboda \cite{Swoboda} for a similar application). That a generic solution of the rank 2 Hitchin equations has only simple zeroes is proven in \cite{MSWW}.
\end{remark}
Theorem \ref{main_theorem} adapts in the case $G=\text{SO}\left( p,p+1 \right)$ to provide the following: 

\begin{theorem}
Let $X_{1}$ be a closed Riemann surface of genus $g_{1}$ and ${{D}_{1}}=\left\{ {{p}_{1}},\ldots ,{{p}_{s}} \right\}$ a collection of $s$ distinct points on $X_{1}$. Consider respectively a closed Riemann surface $X_{2}$ of genus $g_{2}$ and a collection of also $s$ distinct points ${{D}_{2}}=\left\{ {{q}_{1}},\ldots ,{{q}_{s}} \right\}$ on $X_{2}$. Let $\left( {{E}_{1}},{{\Phi }_{1}} \right)\to {{X}_{1}}$ and $\left( {{E}_{2}},{{\Phi }_{2}} \right)\to {{X}_{2}}$ be parabolic polystable $\rm{SO}\left( p,p+1 \right)$-Higgs bundles, one from each of the families described in \S \ref{first_models} and \S \ref{second_models} with corresponding solutions to the Hitchin equations $\left( {{A}_{1}},{{\Phi }_{1}} \right)$ and $\left( {{A}_{2}},{{\Phi }_{2}} \right)$. Then there is a polystable $\rm{SO}\left( p,p+1 \right)$-Higgs bundle $\left( E_{\#},\Phi_{\#}  \right)\to {{X}_{\#}}$ over the complex connected sum of Riemann surfaces ${{X}_{\#}}={{X}_{1}}\#{{X}_{2}}$, which agrees with the initial data over ${{X}_{\#}}\backslash {{X}_{1}}$ and ${{X}_{\#}}\backslash {{X}_{2}}$.
\end{theorem}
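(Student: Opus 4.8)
The strategy is to verify that the two model pairs constructed in \S\ref{first_models} and \S\ref{second_models} satisfy precisely the hypotheses of the general gluing result Theorem~\ref{main_theorem}, adapted as above to the semisimple Lie group $G=\text{SO}\left( p,p+1 \right)$, and then invoke that theorem together with the invertibility of the linearized operator established in the preceding proposition. Concretely, I would proceed in four steps.

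\emph{Step 1: Matching the local model solutions.} The pair $\left( {{A}_{1}},{{\Phi }_{1}} \right)$ coming from the irreducible model of \S\ref{models_via_irreducible_repr} is asymptotic near each ${{p}_{i}}\in {{D}_{1}}$ to $A_{1}^{\bmod }=0$, $\Phi _{1}^{\bmod }=Cx\tfrac{dz}{z}$ with $x$ the semisimple element of (\ref{302}), while the pair $\left( {{A}_{2}},{{\Phi }_{2}} \right)$ from the $\Psi^\textit{par}$-model is asymptotic near each ${{q}_{j}}\in {{D}_{2}}$ to $A_{2}^{\bmod }=0$ and a diagonal traceless Higgs field $\tfrac{dw}{w}$ times the matrix displayed in \S\ref{second_models}. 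I would verify that after the unitary changes of frame already exhibited, and using the relation $\tfrac{dz}{z}=-\tfrac{dw}{w}$ imposed by the connected-sum coordinates $zw=\lambda$, one has exactly $\left( A_{1}^{\bmod },\Phi_{1}^{\bmod} \right)=-\left( A_{2}^{\bmod},\Phi_{2}^{\bmod} \right)$ for the $s$ chosen pairs of points $\left( {{p}_{i}},{{q}_{j}} \right)$; this is where the matching condition ``$\mu\neq 2$'' enters, since for $p>2$ both diagonal model Higgs fields are genuinely $\text{SL}(2p+1,\mathbb{C})$-traceless with a top-left block of the required form, and the sign flip is absorbed by $dz/z=-dw/w$. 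Here I also record that both solutions are obtained by embedding local $\text{SL}(2,\mathbb{R})$-model solutions via maps $\phi$ satisfying the Lipschitz condition of \S4, so the hypotheses of the approximate-solution construction hold.

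\emph{Step 2: Approximate solution and the contraction argument.} Having matched the models, I glue $\left( {{A}_{1}},{{\Phi }_{1}} \right)$ and $\left( {{A}_{2}},{{\Phi }_{2}} \right)$ using the cut-off function $\chi_R$ of \S\ref{approximate_solution} to obtain $\left( A^\textit{app},\Phi^\textit{app} \right)$ over ${{X}_{\#}}$, which by construction is smooth, complex-gauge-equivalent to an exact solution, satisfies the second Hitchin equation exactly, and the first up to a controllable error supported on the neck $\Omega$. I then run the contraction mapping argument of \S\ref{contraction_mapping}: by Lemma~\ref{linearization_lemma} the linearized operator is nonnegative, and by the proposition proved just above (using that a generic meromorphic quadratic differential has a simple zero on $X_1$, where the Higgs field $\tilde e+ze$ fails to be unitary) the relevant kernel $\ker\left( {{L}_{1}}+L_{2}^{*} \right)$ on $L^2_{\text{ext}}(X_\#^\times)$ is trivial; hence $L_{\left( A^\textit{app},\Phi^\textit{app} \right)}$ is invertible with a bound via the Cappell--Lee--Miller gluing theorem, and the estimates on $G_R$ and the Lipschitz constant for $Q_R$ (which, as noted, do not depend on $G$ and are taken from \S6 of \cite{Kydon-article}) give items (i)--(iv) of \S\ref{contraction_mapping}. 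Theorem~\ref{correction} then yields, for $R$ small, a unique $\gamma\in H_B^2\left( {{X}_{\#}},E_H(\mathfrak{h}^{\mathbb{C}}) \right)$ with $\left( {{A}_{\#}},{{\Phi }_{\#}} \right)=\exp(\gamma)^*\left( A^\textit{app},\Phi^\textit{app} \right)$ an exact solution of the $\text{SO}\left( p,p+1 \right)$-Hitchin equations.

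\emph{Step 3: Polystability and conclusion.} By the Hitchin--Kobayashi correspondence for $G=\text{SO}\left( p,p+1 \right)$ (the version recorded in \S\,``Hitchin equations for orthogonal groups''), the exact solution $\left( {{A}_{\#}},{{\Phi }_{\#}} \right)$ corresponds to a polystable $\text{SO}\left( p,p+1 \right)$-Higgs bundle $\left( E_{\#},\Phi_{\#} \right)$ over ${{X}_{\#}}$; setting $\bar\partial:=A_\#^{0,1}$ gives its holomorphic structure. Since the gauge transformation $\exp(\chi_R\gamma)$ is the identity away from the neck, the pair $\left( E_{\#},\Phi_{\#} \right)$ agrees with $\left( {{E}_{1}},{{\Phi }_{1}} \right)$ over ${{X}_{\#}}\backslash {{X}_{2}}$ and with $\left( {{E}_{2}},{{\Phi }_{2}} \right)$ over ${{X}_{\#}}\backslash {{X}_{1}}$, which is the asserted statement; the genus formula $g=g_1+g_2+s-1$ is the one for the complex connected sum. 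The main obstacle is Step~1 together with the verification that the specific off-diagonal structure of $\tilde e+ze$ in \S\ref{second_models} indeed breaks unitarity at a simple zero of $\det\Phi$ — i.e., ensuring the kernel hypothesis of the proposition is genuinely satisfied by \emph{these} two families and not merely generically — since the rest is a direct citation of the machinery of Theorems~\ref{correction} and \ref{main_theorem}.
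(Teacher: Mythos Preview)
Your overall architecture is correct and matches the paper's: specialize Theorem~\ref{main_theorem} to $G=\mathrm{SO}(p,p+1)$, using the invertibility proposition just proved and the $G$-independent estimates from \cite{Kydon-article}. Steps~2 and~3 are fine.

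There is, however, a genuine gap in Step~1. The two local models you write down do \emph{not} match up to a unitary change of frame and the sign flip $dz/z=-dw/w$. The matrix of $\Phi_1^{\bmod}$ is $Cx$ with eigenvalues $\{\pm 2pC,\pm 2(p-1)C,\ldots,\pm 2C,0\}$, whereas the matrix displayed in \S\ref{second_models} for $\Phi_2^{\bmod}$ has eigenvalues $\{0,0,\pm 2(p-1)C,\ldots,\pm 2C,0\}$: the eigenvalue $0$ occurs with multiplicity $3$ (coming from the $\tilde M\oplus\tilde M^{\vee}$ summand, where $\hat\eta=0$) and $\pm 2pC$ are absent. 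Since unitary conjugation preserves spectra, no change of frame can produce $\Phi_1^{\bmod}=-\Phi_2^{\bmod}$ from the \S\ref{second_models} model as it stands. Your sentence ``after the unitary changes of frame already exhibited \ldots one has exactly $\left(A_1^{\bmod},\Phi_1^{\bmod}\right)=-\left(A_2^{\bmod},\Phi_2^{\bmod}\right)$'' is therefore false.

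The paper resolves this by an additional \emph{perturbation argument} (carried out in the subsection immediately preceding the theorem): over $(X_2,D_2)$ one replaces $\hat\eta_2=0$ by a generic $\hat\eta_2\in H^0\!\left(\mathrm{Hom}(\hat W_2,I_2)\otimes K_2^{p}\otimes\iota_2^{p-1}\right)$ and re-runs the $\mathrm{SL}(2,\mathbb{R})$-to-$\mathrm{SO}(p,p+1)$ extension, so that the resulting approximate solution near each $q_j$ now has model Higgs field $-Cx\,dw/w$, i.e.\ exactly $-\Phi_1^{\bmod}$. Only after this deformation within the $\Psi^{\textit{par}}$-family does the matching hypothesis of Theorem~\ref{main_theorem} hold. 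You should insert this perturbation step into Step~1; without it the gluing hypotheses are not verified and the argument does not go through.
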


\begin{definition}
We call such an $\rm{SO}( p,p+1)$-Higgs bundle constructed by the theorem above a \emph{hybrid $\rm{SO}( p,p+1)$-Higgs bundle}\index{hybrid Higgs bundle}\index{Higgs bundle!hybrid}.
\end{definition}

\subsection{Model representations in the exceptional components of $\mathsf{\mathcal{R}}\left( \text{SO}\left( p,p+1 \right) \right)$}

We now show that the specific  hybrid  $\text{SO}\left( p,p+1 \right)$-Higgs bundles constructed in the previous section lie inside the $p\left( 2g-2 \right)-1$ exceptional components of the character variety $\mathsf{\mathcal{R}}\left( \text{SO}\left( p,p+1 \right) \right)$. In fact, by varying the parameters in the construction, namely, the genera $g_{1}$, $g_{2}$ of the Riemann surfaces $X_{1}$, $X_{2}$, the number of points $s$ in the divisors $D_{1}$, $D_{2}$, and the weight $\alpha=2k-1-p$ for the line bundle $\tilde{M}\cong \mathsf{\mathcal{O}}\left( \left( 2k-1-p \right){{D}_{2}} \right)$, one obtains models in \emph{all exceptional components}\index{exceptional component}. This is seen by an explicit computation of the degree of the line bundle $M$ appearing in the description (\ref{203}) of the Higgs bundle data; the exceptional components are fully distinguished by the degree of this line bundle. We have considered: 
\begin{align*}
{{E}_{1}} & =K_{1}^{-p}\otimes \mathsf{\mathcal{O}}\left( -p{{D}_{1}} \right)\oplus K_{1}^{-\left( p-1 \right)}\otimes \mathsf{\mathcal{O}}\left( \left( 1-p \right){{D}_{1}} \right)\oplus \ldots \\
& \ldots \oplus K_{1}^{\left( p-1 \right)}\otimes \mathsf{\mathcal{O}}\left( \left( p-1 \right){{D}_{1}} \right) \oplus K_{1}^{p}\otimes \mathsf{\mathcal{O}}\left( p{{D}_{1}} \right), \text{ and} 
\end{align*}
\begin{align*}
{{E}_{2}} & =V\oplus W=\tilde{M}^{\vee }\oplus {{\tilde{M}}}\oplus K_{2}^{-\left( p-1 \right)}\otimes \mathsf{\mathcal{O}}\left( \left( 1-p \right){{D}_{2}} \right)\oplus \ldots \\
& \ldots \oplus K_{2}^{p-1}\otimes \mathsf{\mathcal{O}}\left( \left( p-1 \right){{D}_{2}} \right),
\end{align*}
with $\tilde{M}\cong \mathsf{\mathcal{O}}\left( \left( 2k-1-p \right){{D}_{2}} \right)$ and  $\text{pardeg} \left( \tilde{M} \right)=\left( 2k-1-p \right)s$, for $k=1,\ldots ,p$. We now use Proposition \ref{additivity_property}, which asserts an additivity property for the parabolic degree of the bundle over the connected sum operation. We thus have that for each $j\in \left\{ 1-p,\ldots ,p-1 \right\}$ the bundle $K_{1}^{{{\otimes }_\textit{par}}j}\#K_{2}^{{{\otimes }_\textit{par}}-j}$ has degree
\begin{align*}
\deg\left( K_{1}^{{{\otimes }_\textit{par}}j}\#K_{2}^{{{\otimes }_\textit{par}}j} \right) & = \text{pardeg} \left( K_{1}^{j}\otimes \mathsf{\mathcal{O}}\left( j{{D}_{1}} \right) \right) + \text{pardeg} \left(K_{2}^{j}\otimes \mathsf{\mathcal{O}}\left( j{{D}_{2}} \right) \right)\\
& = j\left( 2{{g}_{1}}-2+s \right)+j\left( 2{{g}_{2}}-2+s \right)\\
& =2j\left( {{g}_{1}}+{{g}_{2}}+s-1-1 \right)\\
& =2j\left( {{g}_{{{X}_{\#}}}}-1 \right)\\
& = \deg K_{{{X}_{\#}}}^{\otimes j}.
\end{align*}
It is thus a line bundle isomorphic to $K_{{{X}_{\#}}}^{\otimes j}$. \\
Moreover, gluing the parabolic line bundles $K_{1}^{p}\otimes \mathsf{\mathcal{O}}\left( p{{D}_{1}} \right)$ and $\tilde{M}$  provides a line bundle $M\in \text{Pic}\left( {{X}_{\#}} \right)$ with degree
\begin{align*}
\deg \left( M \right) & = \text{pardeg} \left( K_{1}^{p}\otimes \mathsf{\mathcal{O}}\left( p{{D}_{1}} \right) \right)+ \text{pardeg} \left( \tilde{M} \right)\\ 
& = p\left( 2{{g}_{1}}-2+s \right)+ (2k-1-p) s\\
& = 2p\left( {{g}_{1}}-1 \right)+\left( 2k-1 \right)s.
\end{align*}

We deduce that the result of the construction is a Higgs bundle $\left( V,{{W}_{k}},\eta  \right)$ with data $V$ and $\eta$ as in  (\ref{203}) and 
\[{{W}_{k}}:=M\oplus K_{{{X}_{\#}}}^{p-2}\oplus \ldots \oplus K_{{{X}_{\#}}}^{2-p}\oplus {{M}^{-1}}\] 
with $d=\deg \left( M \right)=2p\left( {{g}_{1}}-1 \right)+\left( 2k-1 \right)s$, for  $k=1,\ldots ,p$. One can now check that varying the values of the parameters $g_{1}$, $s$ and $k$, we can obtain model $\text{SO}\left( p,p+1 \right)$-Higgs bundles by gluing, which exhaust all the exceptional smooth $p\left( 2g-2 \right)-1$ components of $\mathsf{\mathcal{M}}\left( \text{SO}\left( p,p+1 \right) \right)$.

\begin{remark}
Notice that the case when $p=1$ actually describes the $\text{Sp}\left( 4,\mathbb{R} \right)$-case from \cite{Kydon-article}. Indeed, we then have $k=1$ and so $\tilde{M}\cong \mathsf{\mathcal{O}}$ with  $d=\deg \left( M \right)=2\left( {{g}_{1}}-1 \right)+s=-\chi \left( {{\Sigma }_{l}} \right)$. The case $p>2$ thus involves an \emph{extra parameter} on the non-trivial line bundle $\tilde{M}$ given by the parabolic structure on a trivial flag.  
\end{remark}

\subsection{Model representations and positivity}\label{gluing_positivity}

The model $\text{SO}\left( p,p+1 \right)$-Higgs bundles obtained above are now all $\Theta$-positive. This follows directly from the recent work of Beyrer and Pozzetti \cite{BePo2}, who showed that the set of $\Theta$-positive representations is closed in the character variety $\mathcal{R}(\text{SO}(p,q))$, for $p \le q$. Moreover, Theorem \ref{GLW_conjecture} asserts that the connected components parameterized by using Higgs bundle methods in \cite{BCGGP} consist solely of $\Theta$-positive representations; the exceptional components of Definition \ref{exceptional_comp_def} do, indeed, fall in these cases (see \cite{Aparicio et al}). 

A more direct way to show that the models in the exceptional $p\left( 2g-2 \right)-1$ smooth components of  $\mathsf{\mathcal{R}}( \text{SO}( p,p+1 ))$ are $\Theta$-positive, is by gluing the positivity condition at the level of infinity of the fundamental group. In fact, a Hitchin representation into $\text{SO}\left( p,p+1 \right)$, and a representation which factors through $\text{SO}\left( p-1,p \right)\times \text{SO}\left( 2 \right)$ with  $\text{SO}\left( p-1,p \right)$-factor in the relative Hitchin component, that is, like the ones we chose, are both $\Theta$-positive (see \cite{Aparicio et al}, \cite{Collier}). On the other hand, in \cite{FG}, pp. 95-100, Fock and Goncharov provide a gluing method for positive local systems on a pair of Riemann surfaces with boundary for the case of split real Lie groups, and so for the group $\text{SO}\left( p,p+1 \right)$ in particular. This involves the requirement that the monodromies along the two boundary components, as well as the assigned configurations of positive flags coincide; see p. 99 of (loc. cit.) for more details.

\vspace{2mm}
\textbf{Acknowledgments}. I would like to warmly thank the editors Ken'ichi Ohshika and Athanase Papadopoulos for their kind invitation to contribute to this collective volume. The work included in \S \ref{Examples-orthogonal} was supported by the Labex IRMIA of the Universit\'{e} de Strasbourg and was undertaken in collaboration with Olivier Guichard. I am also grateful to an anonymous referee as well as the editors for their careful reading of the manuscript and a number of useful suggestions.
\vspace{2mm}

\end{document}